\crefname{appsec}{Appendix}{Appendices}
\newtheorem{theorem}{Theorem}[section]
\newtheorem*{namedtheorem}{\theoremname}
\newcommand{\theoremname}{testing}
\newtheorem{lemma}[theorem]{Lemma}
\newtheorem{claim}[theorem]{Claim}
\newtheorem{proposition}[theorem]{Proposition}
\newtheorem{observation}[theorem]{Observation}
\newtheorem*{question*}{Question}
\theoremstyle{definition}
\newtheorem{definition}[theorem]{Definition}
\newtheorem{remark}[theorem]{Remark}
\theoremstyle{plain}
\title{The strong circular law: a combinatorial view}
\author{
Vishesh Jain\thanks{Massachusetts Institute of Technology. Department of Mathematics. Email: {\tt visheshj@mit.edu}. 
}}
\DeclareMathOperator{\LCD}{LCD}
\date{}
\begin{document}
\maketitle

\global\long\def\R{\mathbb{R}}

\global\long\def\S{\mathbb{S}}

\global\long\def\Z{\mathbb{Z}}

\global\long\def\C{\mathbb{C}}

\global\long\def\Q{\mathbb{Q}}

\global\long\def\N{\mathbb{N}}

\global\long\def\P{\mathbb{P}}

\global\long\def\F{\mathbb{F}}

\global\long\def\U{\mathcal{U}}

\global\long\def\V{\mathcal{V}}

\global\long\def\E{\mathbb{E}}

\global\long\def\Ev{\mathscr{Rk}}

\global\long\def\Dg{\mathscr{D}}

\global\long\def\Ndg{\mathscr{ND}}

\global\long\def\Rv{\mathcal{R}}

\global\long\def\Gv{\mathscr{Null}}

\global\long\def\Hv{\mathscr{Orth}}

\global\long\def\Supp{{\bf Supp}}

\global\long\def\Sv{\mathscr{Spt}}

\global\long\def\ring{\mathfrak{R}}

\global\long\def\1{\mathbbm{1}}

\global\long\def\Bad{{\boldsymbol{B}}}

\global\long\def\supp{{\bf supp}}

\global\long\def\A{\mathcal{A}}

\global\long\def\L{\mathcal{L}}

\global\long\def\dist{\text{dist}}

\begin{abstract}
Let $N_n$ be an $n\times n$ complex random matrix, each of whose entries is an independent copy of a centered complex random variable $z$ with finite non-zero variance $\sigma^{2}$. The strong circular law, proved by Tao and Vu, states that almost surely, as $n\to \infty$, the empirical spectral distribution of $N_n/(\sigma\sqrt{n})$ converges to the uniform distribution on the unit disc in $\C$. 

A crucial ingredient in the proof of Tao and Vu, which uses deep ideas from additive combinatorics, is controlling the lower tail of the least singular value of the random matrix $xI - N_{n}/(\sigma\sqrt{n})$ (where $x\in \C$ is fixed) with failure probability that is inverse polynomial. In this paper, using a simple and novel approach (in particular, not using tools from additive combinatorics or any net arguments), we show that for any fixed matrix $M$ with operator norm at most $n^{3/4 - \epsilon}$ and for all $\eta \geq 0$,
$$\Pr\left(s_n(M+N_n) \leq \eta \right) \lesssim n^{C}\eta + \exp(-n^{c}),$$
where $s_n(M+N_n)$ is the least singular value of $M+N_n$ and $C,c$ are absolute constants. Our result is optimal up to the constants $C,c$ and the inverse exponential-type error rate improves upon the inverse polynomial error rate due to Tao and Vu. 

Our proof relies on an extension of the solution to the so-called counting problem in inverse Littlewood--Offord theory, developed by Ferber, Luh, Samotij, and the author, along with a novel `rounding trick' based on controlling the $\infty \to 2$ operator norm of heavy-tailed random matrices.

\end{abstract}

\section{Introduction}
Let $N_n$ be an $n\times n$ complex random matrix, each of whose entries is an independent copy of a complex random variable $z$ with mean $0$ and finite non-zero variance $\sigma^{2}$. The \emph{empirical spectral distribution} (ESD) $\mu_{n}$ of $N_n$ is defined on $\R^{2}$ by the expression
$$\mu_{n}(s,t) := \frac{1}{n}\cdot \left|\{k \in [n] \mid \Re(\lambda_k) \leq s; \Im(\lambda_k) \leq t\}\right|,$$
where $\lambda_1,\dots,\lambda_n$ denote the eigenvalues of $N_n/\sigma\sqrt{n}$. The celebrated \emph{strong circular law} of Tao and Vu \cite{tao2010random} asserts that almost surely, as $n$ tends to infinity, $\mu_{n}$ converges uniformly to 
$$\mu_{\infty}(s,t):= \frac{1}{\pi}\text{area}\{x\in \C \mid |x| \leq 1, \Re(x) \leq s, \Im(x) \leq t\}.$$ 
The circular law has a long history dating back to the 1950s when it was conjectured as a natural non-Hermitian counterpart to Wigner's famous semi-circle law, and prior to Tao and Vu's definitive work, many researchers obtained partial results requiring extra distributional assumptions on the random variable $z$, and very often weakening the notion of convergence from almost sure convergence to convergence in probability (this is not just a technical point, and genuinely new ideas are required to obtain almost sure convergence; see the discussion in Section 2 of \cite{tao2008random}). We refer the reader to the survey \cite{bordenave2012around} and the references therein for a much more detailed discussion of the history of this problem.

In the case when we further assume that $z$ has $2+\eta$ moments for some $\eta > 0$, the approach of Bai \cite{bai1997circular}, Bai and Silverstein \cite{bai2010spectral}, and Girko \cite{girko1985circular} reduces the problem to controlling the lower tail of the least singular value of the random matrix $xI - N_n/(\sigma\sqrt{n})$, where $x\in \C$ is fixed; even when we assume that $z$ has only non-zero finite variance, controlling the lower tail of the least singular value of this random matrix is a fundamental step in Tao and Vu's proof (recall that the least singular value of a complex matrix $M_n$, denoted by $s_n(M_n)$, is the smallest eigenvalue of the positive semidefinite matrix $\sqrt{M_{n}^{\dagger}M_{n}}$). To this end, Tao and Vu \cite{tao2008random} showed using sophisticated techniques from additive combinatorics that for any constants $A,C > 0$, there exists a constant $B > 0$ such that for any $n\times n$ fixed (complex) matrix $M$ of operator norm at most $n^{C}$,
\begin{equation}
\label{eqn:TV}
    \Pr\left(s_n(M + N_n) \leq n^{-B}\right) \lesssim n^{-A}.
\end{equation}
The dependence of $B$ on $A$ and $C$ can be made explicit and was subsequently sharpened in \cite{tao2010smooth}. Note that for the proof of the circular law, fixing $C = O(\sqrt{n})$ is sufficient.\\

Our goal in the present work is to provide a simple and elementary proof of a quantitative strengthening of \cref{eqn:TV} in the setting of the strong circular law. More precisely, we show:
\begin{theorem}
\label{thm:main}
Let $z$ be a complex random variable with mean $0$ and variance $1$ and let $N_n$ be an $n\times n$ random matrix, each of whose entries is an independent copy of $z$. Let $M$ be a fixed complex matrix with operator norm at most $n^{0.51}$. Then, for all $\eta \geq 0$,
$$\Pr\left(s_n(M+N_n) \leq \eta \right) \leq C\left(n^{5/2}\eta + \exp(-cn^{1/50})\right),$$
where $C,c $ are constants depending only on $z$.
\end{theorem}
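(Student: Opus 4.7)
The plan is to control $\Pr(s_n(A) \leq \eta)$ (with $A := M + N_n$) by classifying any near-null unit vector $v$ of $A$ according to its arithmetic structure and bounding the probability in each class, avoiding $\varepsilon$-nets on the sphere throughout. If $s_n(A) \leq \eta$ there is a unit $v$ with $\|Av\|_2 \leq \eta$. I first dispose of the \emph{compressible} case ($v$ within $\ell_2$-distance $\rho$ of a vector supported on at most $\rho n$ coordinates, for a small constant $\rho$). With probability $1 - \exp(-cn)$, the restriction of $N_n$ to any small subset of columns has smallest singular value $\gtrsim \sqrt{n}$, which dominates the deterministic contribution $\|M\|_{2\to 2} \leq n^{0.51}$ on sparse vectors; consequently no compressible $v$ can be $\eta$-near-null on this event. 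This step uses only the variance of $z$ and a second-moment calculation, with no net on the sphere.

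For incompressible $v$, the key new device is a \emph{rounding trick}: round $v$ entrywise in both real and imaginary parts to the nearest point of $\delta\Z + i\delta\Z$ to obtain a lattice vector $\tilde v$ with $\|v - \tilde v\|_\infty \leq \delta$. Then
\[
\|A \tilde v\|_2 \;\leq\; \|A v\|_2 + \delta\, \|A\|_{\infty \to 2} \;\leq\; \eta + \delta\, \|M\|_{\infty \to 2} + \delta\, \|N_n\|_{\infty \to 2}.
\]
One has $\|M\|_{\infty \to 2} \leq \sqrt{n}\,\|M\|_{2 \to 2} \leq n^{1.01}$, and the central task is to show $\|N_n\|_{\infty \to 2} \leq n^{1+o(1)}$ except with probability $\exp(-cn^{1/50})$. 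This is where the heavy-tailed nature of $z$ enters crucially: I would truncate each entry at a polynomial threshold $n^{c_0}$, control $\|N_n^{\mathrm{trunc}} x\|_2^2 = \sum_i |\langle N_i^{\mathrm{trunc}}, x\rangle|^2$ by a Bernstein-type concentration for each fixed $x \in \{0,1\}^n$ combined with a union bound over $x$, and absorb the handful of entries exceeding the truncation threshold via a crude union bound exploiting only the variance assumption. With $\delta \asymp \eta / n^{2}$, the rounding yields $\|A \tilde v\|_2 \leq 3\eta$, reducing the problem to bounding the probability that some $\tilde v$ in a prescribed discrete set is $3\eta$-near-null for $A$.

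For a fixed lattice candidate $w$, Esseen's small ball inequality combined with tensorization across the $n$ independent rows of $N_n$ gives
\[
\Pr\bigl(\|A w\|_2 \leq 3\eta\bigr) \;\lesssim\; \bigl(C\eta/\sqrt{n} + C/\LCD(w)\bigr)^{n/2}
\]
for an appropriately normalized least common denominator (the obvious two-dimensional version for complex-valued $z$). I split the union bound according to whether $\LCD(w) \leq n^{1/25}$. For unstructured $w$ with $\LCD(w) > n^{1/25}$, the single-vector small ball bound is already below $\exp(-cn \log n)$, comfortably absorbing a crude count of lattice points in the relevant region; coupling this with the row-distance bound $s_n(A) \geq n^{-1/2} \min_i \dist(R_i, H_i)$ applied to the normal vectors to the coordinate hyperplanes produces the continuous contribution $n^{5/2}\eta$. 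For structured $w$ with $\LCD(w) \leq n^{1/25}$, I invoke the counting version of inverse Littlewood--Offord of Ferber--Luh--Samotij--Jain in its lattice form: the number of lattice points with such small $\LCD$ is sharply controlled so that, paired with the small ball bound above, the total structured contribution is at most $\exp(-cn^{1/50})$.

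The step I expect to be the main obstacle is the $\infty \to 2$ norm control for heavy-tailed $N_n$: unlike the spectral norm, this norm is sensitive to every large entry, and naive truncation leaks polynomial factors that must be absorbed into the lattice spacing $\delta$, which in turn feeds back into the counting bound through the size of the lattice on which $\tilde v$ lives. Properly calibrating the truncation threshold, the spacing $\delta$, and the $\LCD$ cut-off against the target exponent $1/50$ is where the delicate constants are fixed. The remaining ingredients (compressibility, Esseen-type inequalities, and tensorization) are standard once the rounding-plus-counting framework is set up, though the complex-valued nature of $z$ requires a genuinely two-dimensional version of Littlewood--Offord throughout.
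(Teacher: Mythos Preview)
Your high-level architecture is in the right spirit, but two steps will not go through as written, and both are precisely where the paper's argument departs from a standard Rudelson--Vershynin template.

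First, the claim that $\|N_n\|_{\infty\to2}\le n^{1+o(1)}$ except with probability $\exp(-cn^{1/50})$ is false under only a second-moment assumption: a single entry of modulus $\ge n^{1+\epsilon}$ already forces $\|N_n\|_{\infty\to 2}\ge n^{1+\epsilon}$, and for (say) $\Pr(|z|>t)\asymp t^{-2-\epsilon}$ this occurs with inverse-polynomial, not sub-exponential, probability. Truncation does not help, since the untruncated part cannot be absorbed by any union bound using only the variance. The paper's remedy (\cref{prop:operator-norm-control}) is to control $\|P_I N_n\|_{\infty\to2}$ after deleting a set $I^c$ of $O(n^{1-\epsilon})$ bad rows; this row deletion then propagates through the entire argument --- one only ever obtains $\|P_K M_n w\|_2$ small for some $K$ with $|K^c|\lesssim n^{0.99}$, and the resulting loss $t^{-O(n^{0.99})}$ in the final union bound is what forces the restriction $\eta\ge 2^{-n^{0.0001}}$.

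Second, and more seriously, your entrywise $\delta$-rounding with $\delta\asymp \eta/n^2$ cannot close the union bound for structured vectors. After rescaling, the integer vector $w=\tilde v/\delta$ satisfies $\|Aw\|_2\lesssim n^2$, so the target ball contains $\gtrsim(Cn^{3/2})^{2n}$ Gaussian integer points; multiplying by the counting bound $(C\rho^{-1}/n^{0.45})^n$ from \cref{thm:counting-continuous} and the single-vector bound $\rho^{n}$ yields $(Cn^{2.55})^n$, which diverges. No choice of $\delta$ rescues this: a coarser grid shrinks the lattice but enlarges the target proportionally. The paper rounds \emph{adaptively via the LCD}: for $a\in\Gamma^2(\eta)$ one uses the witness scale $\theta$ with $|\theta|\le\LCD_{\gamma,\alpha}(a)$ and the guaranteed $w\in(\Z+i\Z)^n$ with $\|\theta a-w\|_2\le\alpha=n^{0.01}$. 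The error $\theta a-w$ is then decomposed into a very-sparse part (support $n^{0.4}$), a sparse-small part (support $n^{0.8}$), and an $\ell_\infty$-small remainder, and the \emph{column-restricted} $\infty\to 2$ bounds of \cref{prop:operator-norm-control}(2) are applied separately to each piece. It is this multiscale step --- not the raw $\infty\to 2$ norm --- that brings the target radius down from $n^{1+o(1)}$ to $n^{0.711}$ and makes the exponent in the union bound negative ($0.424<0.45$). Your entrywise rounding discards the $\ell_2$-smallness of the error that the LCD witness provides, and without it the argument cannot close.
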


\begin{remark}
(1) In the above theorem, the choice of the power $n^{0.51}$ is arbitrarily made for convenience and could be replaced by $n^{0.75-\epsilon}$ for any $\epsilon > 0$; in follow-up work of the author \cite{jain2019c} which builds on some of the ideas in this paper, we will show (using a more complicated proof) how to obtain a bound on the lower tail of $M+N_n$ even if $\|M\| = O(\exp(n^{c}))$.  

(2) We have not tried to optimize any of the constants $C,c, 5/2, 1/50$, but note here that with additional work, the constant $5/2$ can be replaced by the nearly optimal value $1/2 + \epsilon$ for any $\epsilon > 0$ in the case when $\|M\| = O(\sqrt{n})$. Compared to \cref{eqn:TV}, our bound is closer to (optimal) bounds of the form
$$C\sqrt{n}\eta + C\exp(-cn),$$
which have been obtained under stronger assumptions: for the case when $z$ is a real subgaussian random variable and $\|M\| = O(\sqrt{n})$ in the landmark work of Rudelson and Vershynin \cite{rudelson2008littlewood}, and for the case when $z$ is a real random variable and (much more restrictively) $M = 0$ by Rebrova and Tikhomirov \cite{rebrova2018coverings}.

(3) We impose no constraints on the relationship between the real and imaginary parts of $z$ (for instance, in \cite{luh2018complex}, the real and imaginary parts are required to be i.i.d. subgaussian). In this generality, our result (along with a follow-up result of the author \cite{jain2019c}) is the only one showing that a random matrix, each of whose entries is an independent copy of a complex random variable of mean $0$ and variance $1$, is singular with probability at most $\exp(-n^{c})$ for some $c > 0$. Previously, this was not known, even if we further assume that the complex random variables are subgaussian -- the geometric machinery, pioneered by Rudelson and Vershynin \cite{rudelson2008littlewood}, runs into the obstacle that the real dimension of the complex unit sphere is $2n-1$ (see the discussion in Section 10 of \cite{rudelson2016no}).  
\end{remark}
Apart from the quantitative strengthening of \cref{eqn:TV}, we believe that our result is also interesting for the simplicity of the proof techniques, making use only of some standard Fourier analytic techniques along with elementary combinatorial ideas. In particular, in contrast to previous works in this area, we make no use of tools from additive combinatorics or net arguments. Parts of our proof which we believe may be of independent interest are the complex anti-concentration inequality \cref{thm:LCD-controls-sbp} and \cref{prop:reduction-to-integer-heavy-tailed}, which shows how to bypass the lack of control over the $2 \to 2$ operator norm in our setting by means of the $\infty\to 2$-norm. We hope that some of the ideas introduced in this work can aid in proving strong circular laws in other contexts such as \cite{basak2018circular, cook2017circular} where only weak circular laws are known so far.\\

\noindent {\bf Organization: }The rest of this paper is organized as follows. In \cref{sec:tools}, we collect some auxiliary results needed for the proof of our main theorem -- the key results here are \cref{thm:LCD-controls-sbp}, \cref{thm:counting-continuous}, and \cref{prop:operator-norm-control} (proved in \cref{sec:proof-operator-norm}). In \cref{sec:proof-main}, we prove \cref{thm:main} by combining these results. The key ingredient there is \cref{prop:reduction-to-integer-heavy-tailed}.\\

\noindent {\bf Notation: } Throughout the paper, we will omit floors and ceilings when they make no essential difference. For convenience, we will also say `let $p = x$ be a prime', to mean that $p$ is a prime between $x$ and $2x$; again, this makes no difference to our arguments. We will use $\S^{2n-1}$ to denote the set of unit vectors in $\C^{n}$, $B(x,r)$ to denote the ball of radius $r$ centered at $x$, and $\Re(\boldsymbol{v}), \Im(\boldsymbol{v})$ to denote the real and imaginary parts of a complex vector $\boldsymbol{v}\in \C^{n}$. As is standard, we will use $[n]$ to denote the discrete interval $\{1,\dots,n\}$. We will also use the asymptotic notation $\lesssim, \gtrsim, \ll, \gg$ to denote $O(\cdot), \Omega(\cdot), o(\cdot), \omega(\cdot)$ respectively. For a matrix $M$, we will use $\|M\|$ to denote its standard $\ell^{2}\to \ell^{2}$ operator norm. All logarithms are natural unless noted otherwise.  
\\

\noindent {\bf Acknowledgements: }I am indebted to Nick Cook for the suggestion to consider the complex setting, as well as very helpful discussions around the circular law. I would also like to thank Kyle Luh for pointing out that a statement similar to \cref{prop:operator-norm-control} also appears in  \cite{rebrova2018norms}, Hoi Nguyen for discussing his work \cite{nguyen2011optimal}, and Elizaveta Rebrova for discussing her work \cite{rebrova2018coverings}.

\section{Tools and auxiliary results}
\label{sec:tools}
In this section, we collect some preliminary results which will be used in the proof of \cref{thm:main}.

\subsection{Anti-concentration}
The goal of the theory of anti-concentration is to obtain upper bounds on the L\'evy concentration function, defined as follows.  

\begin{definition}[L\'evy concentration function]Let $z$ be an arbitrary complex random variable and let $\boldsymbol{v}:=(v_{1},\dots,v_{n})\in\C^{n}$. We define the \emph{L\'evy concentration function of $\boldsymbol{v}$ at radius $r$ with respect to $z$} by $$\rho_{r,z}(\boldsymbol{v}):=\sup_{x\in\C}\Pr\left(v_{1}z_{1}+\dots+v_{n}z_{n}\in B(x,r)\right),$$
where $z_1,\dots,z_n$ are independent copies of $z$.
\end{definition}
\begin{remark}
In particular, note that $\rho_{r,z}(1) = \sup_{x\in \C}\Pr(z \in B(x,r))$. We will use this notation repeatedly. 
\end{remark}

The next lemma shows that weighted sums of random variables with finite non-zero variance are not too close to being a constant. 
\begin{lemma}(see, e.g., Lemma 6.3 in \cite{tao2010smooth}) 
\label{lemma:anticoncentration}
Let $z$ be a  complex random variable with finite non-zero variance. Then, there exists a constant $c_{\ref{lemma:anticoncentration}} \in (0,1)$ depending only on $z$ such that for any $\boldsymbol{v}\in \S^{2n-1}$,
$$\sup_{\boldsymbol{v}\in \S^{n-1}}\rho_{c_{\ref{lemma:anticoncentration}},z}(\boldsymbol{v}) \leq 1-c_{\ref{lemma:anticoncentration}}.$$
\end{lemma}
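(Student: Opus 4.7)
The plan is to split on how peaky $\boldsymbol{v}$ is, treating the two cases by different tools and reconciling constants at the end. Two preliminary facts about $z$ will be used. First, because $z$ is centered with finite non-zero variance, Taylor-expanding its two-dimensional characteristic function $\phi_z(w) := \mathbb{E}\exp\bigl(i\operatorname{Re}(\overline{w}z)\bigr)$ about $w = 0$ produces constants $c_1, c_2 > 0$ depending only on $z$ such that $|\phi_z(w)| \leq \exp(-c_1 |w|^2 / 2)$ whenever $|w| \leq c_2$. Second, since $z$ is not a point mass, one also has $\rho_{r_0, z}(1) \leq 1 - 2c_0$ for some $r_0, c_0 > 0$ depending only on $z$.

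In the peaky case where some coordinate $|v_{j^*}| \geq \delta$ (with $\delta > 0$ to be picked later), conditioning on the remaining coordinates of $\boldsymbol{z}$ and applying the standard monotonicity $\rho_{r,z}(\boldsymbol{v}) \leq \rho_{r,z}(v_{j^*} z_{j^*}) = \rho_{r/|v_{j^*}|, z}(1) \leq \rho_{r/\delta, z}(1)$ immediately reduces matters to one-variable anti-concentration for $z$; taking $r \leq \delta r_0$ bounds this by $1 - 2c_0$. In the flat case $\max_j |v_j| < \delta$, I would apply the two-dimensional Esseen inequality
\[
\rho_{r,z}(\boldsymbol{v}) \leq C_* \, r^2 \int_{|\xi| \leq 1/r} |\phi_S(\xi)| \, d\xi, \qquad \phi_S(\xi) = \prod_j \phi_z(\overline{v_j}\,\xi).
\]
Choosing $r \geq \delta / c_2$ keeps the entire integration region inside $\{|\xi| \leq c_2/\delta\}$, so that $|\overline{v_j}\,\xi| \leq c_2$ for every $j$; Taylor then gives $|\phi_S(\xi)|^2 \leq \prod_j \bigl(1 - c_1 |v_j|^2 |\xi|^2\bigr) \leq \exp(-c_1 |\xi|^2)$ via $\|\boldsymbol{v}\|_2 = 1$, and integrating the resulting Gaussian over $\mathbb{R}^2$ yields $\rho_{r,z}(\boldsymbol{v}) \leq C_{**} r^2$.

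To finish, I need a single $r = c_{\ref{lemma:anticoncentration}}$ lying in both admissible ranges $r \leq \delta r_0$ and $r \geq \delta / c_2$, which requires the compatibility $r_0 c_2 \geq 1$. The main technical obstacle is precisely this: for a general $z$ with only a second moment, $r_0$ and $c_2$ are independent parameters extracted from unrelated facts about $z$ and need not satisfy this product bound. The cleanest way around this is to replace the Esseen step in the flat case by the symmetrization plus Kolmogorov--Rogozin inequality $\rho_r(S) \leq C r \bigl(\sum_j D_j^2(r)\bigr)^{-1/2}$ with $D_j^2(r) := \mathbb{E}\min\bigl(|v_j (z_j - z_j')|^2, r^2\bigr)$; choosing $\delta$ small enough in $z$ makes $D_j^2(r) \gtrsim |v_j|^2 \sigma^2$ uniformly once $r \geq \delta \cdot t_0(z)$ (where $t_0(z)$ is the truncation scale at which $\mathbb{E}\min((z-z')^2, t_0^2) \geq \sigma^2$), so that $\sum_j D_j^2(r) \gtrsim \sigma^2$ and $\rho_r(S) \lesssim r/\sigma$, which scales correctly for small $r$ without any Taylor-regime constraint. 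Taking $c_{\ref{lemma:anticoncentration}}$ to be a sufficiently small constant multiple of $\min(\sigma, \delta r_0, c_0)$ then closes the argument.
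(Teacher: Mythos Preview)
The paper does not prove this lemma; it is simply quoted with a reference to Lemma~6.3 of Tao--Vu~\cite{tao2010smooth}, so there is no in-paper argument to compare your proposal against.

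Your overall strategy is sound, but the final reconciliation of constants has the same kind of incompatibility you correctly diagnosed in the Esseen attempt and then claimed to have avoided. The flat case needs $r \geq \delta\, t_0$ (so that each $D_j^2(r)$ dominates $|v_j|^2\sigma^2$), while the peaky case needs $r \leq \delta\, r_0$. Together these force $t_0 \leq r_0$, and shrinking $\delta$ does nothing since it cancels from both sides. With your specific definition of $t_0$ as the scale where $\mathbb{E}\min(|z-z'|^2,t_0^2)\geq\sigma^2$, this inequality can genuinely fail: if $z$ is supported on two atoms at distance $M$ with the rarer one having mass $\epsilon<1/2$, then any admissible $r_0$ satisfies $r_0<M/2$ (a larger ball already captures both atoms), whereas $t_0\geq M/\sqrt{2}$.

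The fix is easy: drop the ``$\geq\sigma^2$'' requirement. For \emph{any} $t_0>0$ one has $m_0:=\mathbb{E}\min(|z-z'|^2,t_0^2)>0$ since $z$ is non-degenerate, and then $D_j^2(r)\geq |v_j|^2 m_0$ whenever $r\geq\delta t_0$. So take $t_0=r_0$, set $r=\delta r_0$, and choose $\delta$ small enough that $Cr/\sqrt{m_0}<1$; both cases then go through with the same $r$. A minor secondary point: the Kolmogorov--Rogozin bound you invoke is the one-dimensional form, whereas $z$ is complex; you should either use the two-dimensional Esseen inequality (the paper's \cref{thm:rogozin}) or first project to a real direction, but this is routine.
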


Combining this with the so-called tensorization lemma (see Lemma 2.2 in \cite{rudelson2008littlewood}), we get the following standard estimate for `invertibility with respect to a single vector'.

\begin{lemma}
\label{lemma:invertibility-single-vector}
Let $z$ be a complex random variable with finite non-zero variance. 
Let $M$ be an arbitrary $n\times n$ complex matrix and let $N_n$ be an $n\times n$ complex random matrix each of whose entries is an independent copy of $z$. Then, for any fixed $\boldsymbol{v}\in \S^{2n-1}$,
$$\Pr\left(\|(M+N_{n})\boldsymbol{v}\|_2\leq c_{\ref{lemma:invertibility-single-vector}}\sqrt{n}\right) \leq (1- c_{\ref{lemma:invertibility-single-vector}})^{n},$$
where $c_{\ref{lemma:invertibility-single-vector}}\in (0,1)$ is a constant depending only on $z$. 
\end{lemma}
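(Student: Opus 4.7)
The plan is to reduce $\|(M+N_n)\boldsymbol{v}\|_2$ to a sum of independent real-valued squares, apply the single-coordinate small-ball estimate from \cref{lemma:anticoncentration} to each, and then tensorize.

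First I would write out coordinates. Let $Y_i := ((M+N_n)\boldsymbol{v})_i = (M\boldsymbol{v})_i + \sum_{j=1}^{n}(N_n)_{ij} v_j$ for $i \in [n]$. The key structural observation is that since the rows of $N_n$ are mutually independent, the random variables $Y_1,\dots,Y_n$ are independent, and each $Y_i$ is a shift (by the deterministic complex number $(M\boldsymbol{v})_i$) of the weighted sum $\sum_j z_{ij} v_j$, where $z_{ij}$ are i.i.d.\ copies of $z$. Consequently, by the very definition of the L\'evy concentration function,
$$\Pr\bigl(|Y_i| \leq c_{\ref{lemma:anticoncentration}}\bigr) \leq \sup_{x\in\C}\Pr\!\left(\textstyle\sum_{j} v_j z_{ij} \in B(x,c_{\ref{lemma:anticoncentration}})\right) = \rho_{c_{\ref{lemma:anticoncentration}},z}(\boldsymbol{v}) \leq 1 - c_{\ref{lemma:anticoncentration}},$$
where the final inequality is \cref{lemma:anticoncentration} applied to $\boldsymbol{v}\in \S^{2n-1}$.

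Next I would tensorize. Writing $\|(M+N_n)\boldsymbol{v}\|_2^2 = \sum_{i=1}^n |Y_i|^2$ with the $|Y_i|$ independent nonnegative random variables, the standard tensorization lemma (Lemma 2.2 in \cite{rudelson2008littlewood}) applied with $\zeta_i = |Y_i|$, $\epsilon = c_{\ref{lemma:anticoncentration}}$, and $K = 1 - c_{\ref{lemma:anticoncentration}}$ yields
$$\Pr\!\left(\sum_{i=1}^n |Y_i|^2 \leq c_{\ref{lemma:anticoncentration}}^2 \, n\right) \leq (1 - c')^n$$
for some $c' \in (0,1)$ depending only on $c_{\ref{lemma:anticoncentration}}$ (and hence only on $z$). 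Setting $c_{\ref{lemma:invertibility-single-vector}} := \min(c_{\ref{lemma:anticoncentration}}, c')$ gives the desired bound.

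There is no real obstacle here: the proof is entirely standard, and the complex setting changes nothing because \cref{lemma:anticoncentration} is already stated for complex $z$ and a complex unit vector $\boldsymbol{v}\in \S^{2n-1}$, while the tensorization lemma is agnostic to whether the $Y_i$ arise from real or complex linear combinations since it only sees the scalars $|Y_i|$. If one wanted to avoid invoking a black-box tensorization lemma, an equally short alternative is to let $W_i = \mathbbm{1}[|Y_i| > c_{\ref{lemma:anticoncentration}}]$, observe $W_1,\dots,W_n$ are independent Bernoulli with mean $\geq c_{\ref{lemma:anticoncentration}}$, and apply a Chernoff bound to conclude that $\sum_i W_i \geq c_{\ref{lemma:anticoncentration}} n /2$ with probability $\geq 1 - \exp(-c''n)$, which then forces $\|(M+N_n)\boldsymbol{v}\|_2^2 \geq c_{\ref{lemma:anticoncentration}}^2 \cdot c_{\ref{lemma:anticoncentration}} n /2$ on this event.
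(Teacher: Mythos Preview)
Your proof is correct and follows exactly the approach the paper indicates: the paper does not spell out a proof but simply states that the lemma follows by combining \cref{lemma:anticoncentration} with the tensorization lemma (Lemma~2.2 in \cite{rudelson2008littlewood}), which is precisely what you do.
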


The next classical lemma, due to Esseen, is a generalization (up to constants) of the Erd\H{o}s-Littlewood-Offord anti-concentration inequality.
\begin{lemma}[Theorem 2 in \cite{esseen1966kolmogorov}] 
\label{thm:rogozin}
Let $z_1,\dots,z_n$ be jointly independent complex random variables and let $t_1,\dots,t_n$ be some positive real numbers. Then, for any $t \geq \max_j t_j$, we have
$$\rho_{t,\sum_{j=1}^{n}z_j}(1) \leq C_{\ref{thm:rogozin}}t^{2}\left(\sum_{j=1}^{n}t_j^{4}(1-\rho_{t_j, z_j}(1))\right)^{-1/2},$$
where $C_{\ref{thm:rogozin}}\geq 1$ is an absolute constant. 
\end{lemma}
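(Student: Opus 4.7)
The plan is to use the classical Fourier-analytic (characteristic function) approach, generalized to two real dimensions under the identification $\mathbb{C}\cong\mathbb{R}^2$. Setting $Z=\sum_j z_j$ and writing $\phi_j(\xi):=\mathbb{E}\exp(i\langle\xi,z_j\rangle)$ for $\xi\in\mathbb{R}^2$, independence of the $z_j$ gives $\phi_Z=\prod_j\phi_j$, which is the decoupling that the Fourier side buys us.

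The first step is the two-dimensional Esseen smoothing inequality,
\[
\rho_{t,Z}(1)\;\lesssim\; t^{2}\int_{\|\xi\|_\infty\leq 1/t}|\phi_Z(\xi)|\,d\xi,
\]
proved by convolving the distribution of $Z$ with a smooth bump whose Fourier transform is compactly supported in a box of sidelength $O(1/t)$, applying Parseval, and using the trivial pointwise control on the bump. This is exactly what is responsible for the $t^{2}$ prefactor in the claimed bound.

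The second step is a pointwise control on $|\phi_j(\xi)|$ in terms of $\rho_{t_j,z_j}(1)$. Via the symmetrization identity $|\phi_j(\xi)|^2=\mathbb{E}\cos\langle\xi,z_j-z_j'\rangle$ (with $z_j'$ an independent copy of $z_j$), the fact that $|z_j-z_j'|\geq t_j$ holds with probability at least $1-\rho_{t_j,z_j}(1)$ (by the definition of the concentration function and conditioning), and the elementary inequality $1-\cos u\gtrsim \min(u^{2},1)$, one derives a bound of the schematic shape
\[
|\phi_j(\xi)|\;\leq\;\exp\bigl(-c(1-\rho_{t_j,z_j}(1))\,\omega_j(\xi)\bigr),
\]
where $\omega_j(\xi)$ is a direction-dependent quantity of order $\min(t_j^{2}\|\xi\|^{2},1)$. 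The direction-dependence is a genuinely two-dimensional feature, since an individual $z_j$ may be supported on a real line in $\mathbb{C}$, causing $|\phi_j(\xi)|=1$ on a codimension-$1$ set of $\xi$; this precludes any naive pointwise bound and forces one to work with quantities that average well against the Fourier integration variable.

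The third step is to substitute the product bound into Esseen's integral and evaluate the resulting two-dimensional integral. Following the change-of-variables strategy used in the classical Kolmogorov--Rogozin proof (see, e.g., Petrov, \emph{Sums of Independent Random Variables}) -- but in two dimensions rather than one -- produces
\[
\int_{\|\xi\|_\infty\leq 1/t}\exp\Bigl(-c\sum_j(1-\rho_{t_j,z_j}(1))\,\omega_j(\xi)\Bigr)d\xi\;\lesssim\;\Bigl(\sum_j t_j^{4}(1-\rho_{t_j,z_j}(1))\Bigr)^{-1/2},
\]
where the $t_j^{4}$ weighting (as opposed to the $t_j^{2}$ of the one-dimensional case) and the square root are the precise two-dimensional analogues of the classical computation. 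Combined with step one, this yields the claimed estimate. The main technical obstacle, in my view, is step two: handling the direction-dependence of $|\phi_j(\xi)|$ so that it cooperates with the product structure and the two-dimensional integration in step three, a subtlety that simply does not arise in the one-dimensional Kolmogorov--Rogozin inequality.
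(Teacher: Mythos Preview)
The paper does not give its own proof of this lemma: it is quoted directly as Theorem~2 of Esseen~(1966) and used as a black box. So there is nothing in the paper to compare against except Esseen's original argument.

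Your outline is in fact Esseen's approach: the two-dimensional smoothing inequality (step one), symmetrization to control $|\phi_j|$ (step two), and integration over the Fourier ball (step three). You have correctly identified that the direction-dependence of $|\phi_j(\xi)|$ is the genuinely two-dimensional issue, and that the $t^{2}$ prefactor and the $t_j^{4}$ weights are the dimension-$2$ analogues of the classical one-dimensional exponents.

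That said, your sketch stops precisely where the work is. In step three you assert that the integral evaluates to the desired quantity ``following the change-of-variables strategy \ldots\ but in two dimensions,'' yet the whole point is that the one-dimensional change of variables does \emph{not} port over directly: for a single summand supported on a line one has $|\phi_j(\xi)|=1$ on an entire line through the origin, so no pointwise bound of the form $|\phi_j(\xi)|\leq\exp(-c(1-\rho_{t_j,z_j}(1))\min(t_j^{2}\|\xi\|^{2},1))$ can hold. One must actually exploit the integration over $\xi$ --- e.g.\ pass to polar coordinates and show that the angular average of $\min(r^{2}|W_j|^{2}\cos^{2}\theta,1)$ contributes the correct power, or use Esseen's device of comparing the two-dimensional integral to a product of one-dimensional integrals. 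Without this computation, the crux of the argument is asserted rather than carried out. The plan is sound; as written it is not yet a proof.
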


The next definition isolates a convenient property of the random variables we consider in this paper. 
\begin{definition}
We say that a complex random variable $z$ is \emph{$C$-good} if 
\begin{equation}
\label{eqn:assumption-on-z}
\Pr(C^{-1}\leq|z_{1}-z_{2}|\leq C)\geq C^{-1},
\end{equation}
where $z_1$ and $z_2$ denote independent copies of $z$. 
The smallest $C\geq 1$ with respect to which $z$ is $C$-good will be denoted by $C_z$. 
\end{definition}

Indeed, it is straightforward to see that complex random variables with finite non-zero variance are $C$-good for some finite $C$, so that there is no loss of generality for us in imposing this additional restriction. 

\begin{observation}
\label{lemma:non-trivial-implies-good}
Let $z$ be a complex random variable with variance $1$. Then, $z$ is $C_{z}$-good for some $C_{z} \geq 1$. 
\end{observation}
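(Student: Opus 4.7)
The plan is to reduce the claim to a moment calculation on the symmetrized variable $w := z_1 - z_2$. Since $\E[w] = \E[z_1] - \E[z_2] = 0$ and the variance of $z$ is $1$, we have $\E|w|^2 = \mathrm{Var}(z_1) + \mathrm{Var}(z_2) = 2$. It then suffices to exhibit a constant $C \geq 1$ and thresholds $a, b > 0$ with $C^{-1} \leq a < b \leq C$ such that $\Pr(a \leq |w| \leq b) \geq C^{-1}$.

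First, I would remove the contribution of the tail of $|w|^2$. Since $|w|^2$ is integrable and non-negative, the dominated convergence theorem gives $\lim_{R\to\infty}\E[|w|^2 \mathbf{1}_{|w|>R}] = 0$. So there is some $R_0 = R_0(z)$ with $\E[|w|^2 \mathbf{1}_{|w|>R_0}] \leq 1/2$, and hence $\E[|w|^2 \mathbf{1}_{|w|\leq R_0}] \geq 3/2$. Next, I would remove the contribution of very small values via the trivial bound $\E[|w|^2 \mathbf{1}_{|w|< r}] \leq r^2$; taking $r = 1/2$ yields $\E[|w|^2 \mathbf{1}_{1/2 \leq |w| \leq R_0}] \geq 3/2 - 1/4 > 1$.

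Finally, I would bound the middle piece from above by $R_0^2 \Pr(1/2 \leq |w| \leq R_0)$, which gives
\[
\Pr(1/2 \leq |w| \leq R_0) \;\geq\; R_0^{-2}.
\]
Setting $C_z := \max(R_0,\, 2,\, R_0^2)$ completes the argument: the event $\{1/2 \leq |w| \leq R_0\}$ is contained in $\{C_z^{-1} \leq |w| \leq C_z\}$, and its probability is at least $R_0^{-2} \geq C_z^{-1}$.

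There is no real obstacle here; the only mild subtlety is that we are working under the bare assumption of finite second moment, so we cannot use higher-moment tools like Paley--Zygmund with $\E|w|^4$. Instead the upper-tail step must appeal to the qualitative uniform integrability of $|w|^2$ (equivalently, dominated convergence), which is what makes the extracted constant $C_z$ depend on the distribution of $z$ rather than being universal.
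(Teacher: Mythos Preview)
Your proof is correct. Both your argument and the paper's establish lower and upper cutoffs for $|z_1 - z_2|$ using only $\E|z_1 - z_2|^2 = 2$, but the mechanics differ slightly. The paper handles the lower cutoff via anti-concentration---using that nonzero variance forces $\rho_{v_z,z}(1)\leq u_z<1$ for some $v_z>0$, hence $\Pr(|z_1 - z_2|\leq v_z/2)\leq u_z$---and the upper cutoff via a direct Markov bound on $|z_1 - z_2|^2$. You instead truncate the second moment itself: dominated convergence gives an $R_0$ absorbing at most half the mass of $|w|^2$, and the trivial bound $\E[|w|^2\mathbf{1}_{|w|<1/2}]\leq 1/4$ handles the low end, leaving at least unit second moment in the annulus and hence probability $\geq R_0^{-2}$ there. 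Your route is marginally more self-contained (no appeal to the L\'evy concentration function), while the paper's yields a constant expressed directly in terms of anti-concentration parameters $u_z,v_z$ that are reused elsewhere; substantively the two are interchangeable.
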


We conclude this subsection with the following consequence of \cref{thm:rogozin}.

\begin{lemma}
\label{claim:anticonc-W}
Let $z$ be a complex random variable with variance $1$. There exists a constant $C_{\ref{claim:anticonc-W}}\geq 1$ depending only on $z$ such that for all $\boldsymbol{w}:=(w_1,\dots,w_n)\in (\Z+i\Z)^{n}$ with support of size at least $n^{0.99}$, $$\rho_{1,z}(\boldsymbol{w}) \leq C_{\ref{claim:anticonc-W}}n^{-0.495}.$$
\end{lemma}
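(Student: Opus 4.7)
The plan is a direct application of Esseen's inequality (Lemma \ref{thm:rogozin}) to the nonzero terms $w_j z_j$, exploiting the fact that a nonzero Gaussian integer satisfies $|w_j| \ge 1$. The reduction from weighted to unweighted single-variable anti-concentration proceeds via the obvious scaling identity
$$\rho_{r, w z}(1) = \sup_{x \in \C} \Pr\bigl(z \in B(x/w, r/|w|)\bigr) = \rho_{r/|w|, z}(1),$$
valid for any $w \in \C \setminus \{0\}$.

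First I would discard the zero coordinates of $\boldsymbol{w}$ (they contribute nothing), and for each $j \in \supp(\boldsymbol{w})$ choose the auxiliary parameter $t_j := \min(c_{\ref{lemma:anticoncentration}}\,|w_j|,\, 1)$. By construction $t_j/|w_j| \le c_{\ref{lemma:anticoncentration}}$ in either case, so the scaling identity above combined with Lemma \ref{lemma:anticoncentration} (applied in dimension one, with the unit vector $\boldsymbol{v} = 1$) gives
$$\rho_{t_j, w_j z_j}(1) = \rho_{t_j/|w_j|, z}(1) \le 1 - c_{\ref{lemma:anticoncentration}}.$$
At the same time, since $|w_j| \ge 1$ for a nonzero Gaussian integer, we also have $t_j \ge c_{\ref{lemma:anticoncentration}}$, and hence $t_j^4 \ge c_{\ref{lemma:anticoncentration}}^4$. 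These two facts together give the key uniform lower bound $t_j^4 (1 - \rho_{t_j, w_j z_j}(1)) \ge c_{\ref{lemma:anticoncentration}}^5$ for every $j$ in the support.

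Next I would invoke Esseen's inequality with the choice $t = 1$, which dominates all the $t_j$. This yields
$$\rho_{1, z}(\boldsymbol{w}) \le C_{\ref{thm:rogozin}} \Bigl(\sum_{j \in \supp(\boldsymbol{w})} t_j^4 (1 - \rho_{t_j, w_j z_j}(1))\Bigr)^{-1/2} \le C_{\ref{thm:rogozin}} \bigl(c_{\ref{lemma:anticoncentration}}^5 \cdot |\supp(\boldsymbol{w})|\bigr)^{-1/2},$$
and inserting the support bound $|\supp(\boldsymbol{w})| \ge n^{0.99}$ produces the claimed $O(n^{-0.495})$ decay after absorbing the $z$-dependent constants $c_{\ref{lemma:anticoncentration}}$ and $C_{\ref{thm:rogozin}}$ into $C_{\ref{claim:anticonc-W}}$.

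I do not foresee a substantive obstacle: the whole argument is a single invocation of Esseen, and the only point of care is to match the parameters $t$ and $t_j$ so that both $t_j^4$ and $1 - \rho_{t_j, w_j z_j}(1)$ are simultaneously bounded below by a positive constant depending only on $z$. The Gaussian-integer hypothesis enters exactly at the step $|w_j| \ge 1$, which is what prevents $t_j$ (and hence $t_j^4$) from collapsing in the regime where the coefficients are small.
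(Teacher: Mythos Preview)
Your proof is correct and follows essentially the same approach as the paper: both apply Esseen's inequality (Lemma~\ref{thm:rogozin}) directly to the terms $w_j z_j$, using that $|w_j|\ge 1$ for nonzero Gaussian integers to get uniform single-variable anti-concentration. The only cosmetic difference is that the paper takes $t_j=v_z$ for all $j$ and then passes from radius $v_z$ to radius $1$ by a covering argument, whereas your choice $t_j=\min(c_{\ref{lemma:anticoncentration}}|w_j|,1)$ lets you apply Esseen directly at $t=1$ and skip that step.
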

\begin{proof}
As above, we know that $\rho_{v_z,z}(1)\leq u_z$ for some $u_z,v_z\in (0,1)$. Therefore, for all $j\in \supp(\boldsymbol{w})$,
$$\rho_{v_{z},w_{j}z_{j}}(1) \leq \rho_{|w_j|v_{z},w_{j}z_{j}}(1) \leq \rho_{v_{z},z_{j}}(1) \leq u_{z}.$$
Hence, by \cref{thm:rogozin}, 
$$\rho_{v_{z},\sum_{j=1}^{n}w_j z_{j}}(1) \leq \frac{C_{\ref{thm:rogozin}}}{\sqrt{|\supp(\boldsymbol{w})|(1-u_{z})}}.$$
Since $|\supp(\boldsymbol{w})| \geq n^{0.99}$, and since $\rho_{1,z}(\boldsymbol{w}) = O(v_{z}^{-2}\rho_{v_{z},\sum_{j=1}^{n}w_j z_{j}}(1))$ (since any ball in the complex plane of radius $1$ can be covered by $O(v_{z}^{-2})$ balls of radius $v_{z}$)  , the desired conclusion follows.
\end{proof}

\subsection{The Least Common Denominator}
The proof of \cref{thm:main} will be based on a `rounding argument' which extracts a `not-too-large' Gaussian integer vector certifying that the least singular value of a complex matrix is small (see \cite{jain2019combinatorial} for the most basic version of this argument). For this, 
we will use (albeit in a quite different manner from Rudelson and Vershynin) the notion of the Least Common Denominator (LCD) of a vector, and its connection to the L\'evy concentration function, as developed in \cite{rudelson2008littlewood}. 

\begin{remark}Our definition of the LCD is different from the ones appearing in the literature for the complex case, and has been made keeping in mind our application to rounding vectors.
\end{remark}
\begin{definition}[Least Common Denominator (LCD)]
Let $\boldsymbol{a}\in \C^{n}\setminus \{\boldsymbol{0}\}$. For $\gamma\in(0,1)$ and $\alpha>0$, define 
\[
\LCD_{\gamma,\alpha}(\boldsymbol{a}):=\inf_{\theta \in \C}\left\{ |\theta|>0:\dist(\theta \boldsymbol{a},(\Z+i\Z)^{n})<\min\{\gamma|\theta|\|\boldsymbol{a}\|_{2},\alpha\}\right\} .
\]
Note that the requirement that the distance is smaller than $\gamma|\theta|\| \boldsymbol{a}\|_{2}$
forces us to consider only non-trivial Gaussian integer points as approximations
of $\theta \boldsymbol{a}$. 
\end{definition}

The following theorem shows that vectors with large LCD have small L\'evy concentration function on scales which are larger (up to some small polynomial losses) than $\Omega(1/\text{LCD})$.

\begin{theorem}
\label{thm:LCD-controls-sbp}
Let $z$ denote a $C_z$-good complex random variable. Then, for every $\boldsymbol{a}\in \S^{2n-1}$, for every $\alpha \in (0,\sqrt{n}), \gamma \in (0,1)$, and for
$$\delta \geq \frac{n^{0.1}\alpha}{\LCD_{\alpha,\gamma}(\boldsymbol{a})},$$
we have
$$\rho_{\delta, z}(\boldsymbol{a}) \leq C_{\ref{thm:LCD-controls-sbp}}\left(\frac{\sqrt{n}\delta}{\gamma} + \exp\left(-C_{\ref{thm:LCD-controls-sbp}}^{-1}\alpha^{2}\right)\right),$$
where $C_{\ref{thm:LCD-controls-sbp}} \geq 1$ is a constant depending only on $C_z$.
\end{theorem}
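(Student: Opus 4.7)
The plan is to follow the classical Esseen-plus-LCD strategy pioneered by Rudelson and Vershynin, adapted to the present complex setting and to the variant LCD defined above. First, via a two-dimensional Esseen--Hal\'asz inequality (identifying $\C$ with $\R^2$), I would reduce matters to bounding an integral of the characteristic function of $\sum_j a_j z_j$:
$$\rho_{\delta, z}(\boldsymbol{a}) \lesssim \delta^2 \int_{B(0, K/\delta)} |\Phi(\xi)|\,d\xi, \qquad \Phi(\xi) := \prod_{j=1}^n \phi_z(\bar\xi a_j),$$
where $\phi_z(c) := \E[\exp(2\pi i \Re(c z))]$ and $K$ is an absolute constant.

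Next, by symmetrization with $w := z - z'$, combined with the elementary inequality $\cos(2\pi x) \leq 1 - c_0 \|x\|^2_{\R/\Z}$ valid for $\|x\|_{\R/\Z} \leq 1/4$, one obtains $|\phi_z(c)|^2 \leq 1 - c_1 \E_w[\|\Re(cw)\|^2_{\R/\Z} \wedge 1]$. To pass from the real-part distance $\|\Re(cw)\|_{\R/\Z}$ to the full Gaussian-integer distance that appears in the LCD, I would pair this estimate with its companion obtained by replacing $\xi$ by $-i\xi$ (since $\Re(-ic) = \Im(c)$), and then exploit the rotational symmetry of the integration domain $B(0,K/\delta)$ via Cauchy--Schwarz to combine them. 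Together with $1-x \leq e^{-x}$ and the independence of the $w_j$'s, this should yield a bound of the shape
$$\int_{B(0,K/\delta)} |\Phi(\xi)|\,d\xi \;\lesssim\; \int_{B(0,K/\delta)} \E_{\boldsymbol{w}}\bigg[\exp\bigg(-c_2 \sum_{j=1}^{n} \dist(\bar\xi\, a_j w_j,\Z+i\Z)^2 \wedge 1\bigg)\bigg]^{1/2} d\xi.$$

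Finally, I would invoke the LCD. The $C_z$-goodness of $z$ ensures $|w_j| \in [C_z^{-1}, C_z]$ with probability at least $C_z^{-1}$, and a Chernoff bound localizes us to configurations in which this holds for at least a $(1-o(1))$ fraction of indices $j$. On this event, whenever $|\bar\xi|\cdot C_z < \LCD_{\gamma,\alpha}(\boldsymbol{a})$, the defining property of the LCD forces $\sum_j \dist(\bar\xi a_j w_j, \Z+i\Z)^2 \gtrsim \min(\gamma^2 |\bar\xi|^2, \alpha^2)$, after approximating the coordinate-scaled vector $\bar\xi w_j \boldsymbol{a}$ by $\theta \boldsymbol{a}$ for a single $\theta \in \C$ of comparable magnitude; this approximation is where the slack factor $n^{0.1}$ in the hypothesis on $\delta$ is spent. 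Splitting the $\xi$-integral into the interior disc $\{|\xi| \lesssim \alpha/\gamma\}$, where the integrand decays like $\exp(-c \gamma^2|\xi|^2)$ (producing the $\sqrt{n}\delta/\gamma$ term after multiplying by $\delta^2$), and the complementary annulus, where it is uniformly bounded by $\exp(-c\alpha^2)$, then gives the stated inequality. The principal technical obstacle is exactly this bridging between the one-dimensional real-part distance produced by the symmetrization and the two-dimensional Gaussian-integer distance used to define the LCD, together with reconciling the $n$ independent symmetrized variables $w_j$ with the LCD's single-scalar formulation — the small polynomial losses built into the hypothesis are the price paid to push this through.
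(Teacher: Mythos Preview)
Your overall shape (Esseen--type Fourier bound, symmetrization, then an LCD lower bound in the exponent) is the right one and matches the paper. However, the step you flag as ``the principal technical obstacle'' is handled incorrectly, and your proposed workaround does not go through.

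Concretely: once you write the bound as an expectation over $n$ \emph{independent} symmetrized variables $w_1,\dots,w_n$, the quantity you must control is $\sum_j \dist(\bar\xi a_j w_j,\Z+i\Z)^2$, where each coordinate of $\boldsymbol{a}$ is multiplied by a \emph{different} random complex number. The LCD only speaks about $\dist(\theta\boldsymbol{a},(\Z+i\Z)^n)$ for a \emph{single} scalar $\theta$. Your suggestion to ``approximate the coordinate-scaled vector $\bar\xi w_j\boldsymbol{a}$ by $\theta\boldsymbol{a}$ for a single $\theta$'' fails: even when all $|w_j|\in[C_z^{-1},C_z]$, their arguments are independent, so $(\bar\xi w_j a_j)_j$ is at $\ell_2$-distance of order $|\xi|$ from any scalar multiple of $\boldsymbol{a}$. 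For $|\xi|$ in the relevant range this error swamps $\alpha$, so the LCD gives no information. The slack $n^{0.1}$ does not pay for this; in the paper it is used for an entirely different purpose (guaranteeing that the ``bad'' set $B$ contains at most one ball, via $\delta\cdot\LCD\gg\alpha$).

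The paper sidesteps the whole issue by never introducing $n$ independent symmetrized variables in the first place. Working with the deterministic quantities $\|v_j\xi\|_z^2=\E\|\Re\{v_j\xi(z_1-z_2)\}\|_{\R/\Z}^2$ in the exponent, one applies the ``doubling trick'' $\rho(\boldsymbol{v})^2=\rho(\boldsymbol{v})\rho(i\boldsymbol{v})\le 2e^{2\pi}P_z((\boldsymbol{v},i\boldsymbol{v}))$ to pass from the real-part distance to the full Gaussian-integer distance, and then---crucially---uses \emph{linearity of expectation} to rewrite $\sum_j(\|v_j\xi\|_z^2+\|iv_j\xi\|_z^2)$ as $\E_y[\dist^2(\boldsymbol{v}\xi y,(\Z+i\Z)^n)]$ with a \emph{single} $y=z_1-z_2$. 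Conditioning on $|y|\in[C_z^{-1},C_z]$ (via $C_z$-goodness) and taking the supremum over such $y$ then leaves an integral in which $\theta:=\delta^{-1}\xi y$ is a single complex scalar multiplying $\boldsymbol{a}$, so the LCD applies directly. If you revisit your derivation and keep the expectation inside the exponent (noting that $\|\cdot\|_{\R/\Z}^2\le 1/4$ makes the truncation $\wedge 1$ vacuous), you will see that the ``$n$ independent $w_j$'' obstacle is an artifact of pushing the expectation outside too early.
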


A more precise version of this theorem appears for real random variables in \cite{rudelson2009smallest}. Actually, a version for complex random variables is also stated there although, as noted above, their definition of $\LCD$ is different from ours. The proof of \cref{thm:LCD-controls-sbp} follows from standard Fourier analytic arguments for the real case (in particular, we will use a crude version of the argument of Friedland and Sodin in \cite{friedland2007bounds}) once we use a novel `doubling trick'. 

We will need a preliminary result from Section 4 of \cite{tao2008random}.

\begin{definition}
Let $z$ be an arbitrary complex random variable. For any $w\in\C$, we define 
$$\|w\|_{z}^{2}:=\E\|\Re\{w(z_{1}-z_{2})\}\|_{\R/\Z}^{2},$$
where $z_{1},z_{2}$ denote i.i.d. copies of $z$ and $\|\cdot\|_{\R/\Z}$ denotes the distance to the nearest integer. 
\end{definition}

\begin{lemma}[Lemma 5.2 in \cite{tao2008random}]
\label{lemma:initial-fourier-bound}
Let
$\boldsymbol{v}:=(v_{1},\dots,v_{n})\in\C^{n}$ and let $z$ be an
arbitrary complex random variable. Then, 
$$\rho_{r,z}(\boldsymbol{v})\le e^{\pi r^{2}}P_{z}(\boldsymbol{v}) \leq  e^{\pi r^{2}}\int_{\C}\exp\left(-\sum_{i=1}^{n}\|v_{i}\xi\|_{z}^{2}/2-\pi|\xi|^{2}\right)d\xi.$$
Here, 
$$P_z(\boldsymbol{v}) := \E_{x_1,\dots,x_n}\exp(-\pi|v_1 x_1 + \dots + v_n x_n|^{2}),$$
where $x_1,\dots,x_n$ are i.i.d. copies of $(z_1 - z_2)\cdot \text{Ber} (1/2)$, with $z_1,z_2$  distributed as $z$, and  $\text{Ber}(1/2), z_1, z_2$ mutually independent.  
\end{lemma}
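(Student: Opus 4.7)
The statement comprises two inequalities; my plan is to prove each in turn using the complex-Gaussian Fourier identity
\[
e^{-\pi|y|^{2}}=\int_{\C}e^{2\pi i\Re(\xi y)}e^{-\pi|\xi|^{2}}d\xi,\qquad y\in\C,
\]
which itself follows from the one-dimensional Gaussian self-duality $\int_{\R}e^{2\pi iut}e^{-\pi t^{2}}dt=e^{-\pi u^{2}}$ applied separately to the real and imaginary parts of $\xi$.

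For the first inequality $\rho_{r,z}(\boldsymbol{v})\le e^{\pi r^{2}}P_{z}(\boldsymbol{v})$, I would start from the pointwise Gaussian domination of the ball indicator, $\mathbbm{1}_{|y-x|\le r}\le e^{\pi r^{2}}e^{-\pi|y-x|^{2}}$, which for $S:=\sum_{j}v_{j}z_{j}$ gives $\Pr(|S-x|\le r)\le e^{\pi r^{2}}\E e^{-\pi|S-x|^{2}}$. Expanding the Gaussian via the identity above, applying Fubini to swap expectation with integral, and taking absolute values kills the $x$-dependence and reduces matters to estimating $\int_{\C}e^{-\pi|\xi|^{2}}|\phi_{S}(\xi)|d\xi$, where $\phi_{S}(\xi)=\prod_{j}\phi_{v_{j}z_{j}}(\xi)$ is the characteristic function of $S$. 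The crucial step is the elementary modulus bound $|\phi|\le(1+|\phi|^{2})/2$ applied to each $\phi_{v_{j}z_{j}}$: since $|\phi_{v_{j}z_{j}}(\xi)|^{2}=\E e^{2\pi i\Re(\xi v_{j}(z_{j,1}-z_{j,2}))}$ and $(1+e^{i\theta})/2=\E_{\epsilon}e^{i\epsilon\theta}$ for $\epsilon\sim\Ber(1/2)$, this rewrites as $|\phi_{v_{j}z_{j}}(\xi)|\le\E e^{2\pi i\Re(\xi v_{j}x_{j})}$, where $x_{j}=\epsilon_{j}(z_{j,1}-z_{j,2})$ is precisely the variable in the definition of $P_{z}$. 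Taking the product over $j$ (using independence), applying the Gaussian Fourier identity in reverse, and recognizing $\E e^{-\pi|Y|^{2}}=P_{z}(\boldsymbol{v})$ with $Y=\sum_{j}v_{j}x_{j}$ finishes the first bound.

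For the second inequality, I would apply the Fourier identity directly to $P_{z}(\boldsymbol{v})=\E e^{-\pi|Y|^{2}}$ to obtain $P_{z}(\boldsymbol{v})=\int_{\C}e^{-\pi|\xi|^{2}}\prod_{j}\E e^{2\pi i\Re(\xi v_{j}x_{j})}d\xi$ after Fubini. Each inner factor simplifies to the real quantity $\E\cos^{2}(\pi\theta_{j})$ with $\theta_{j}:=\Re(\xi v_{j}(z_{1}-z_{2}))$, since the sine component averages to zero by the symmetry of $z_{1}-z_{2}$. The elementary trigonometric inequality $|\sin(\pi\theta)|\ge2\|\theta\|_{\R/\Z}$ then gives $\cos^{2}(\pi\theta_{j})\le1-4\|\theta_{j}\|_{\R/\Z}^{2}$; taking expectation in $z_{1},z_{2}$ and using $1-t\le e^{-t}$ yields $\E\cos^{2}(\pi\theta_{j})\le\exp(-4\|v_{j}\xi\|_{z}^{2})\le\exp(-\|v_{j}\xi\|_{z}^{2}/2)$. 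Substituting the product inside the integral gives the claimed bound; the factor $1/2$ in the exponent provides slack that trivially absorbs the sharper constant.

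I do not anticipate a serious obstacle, since the argument is essentially routine Fourier bookkeeping. The one point requiring care is to verify that the intermediate symmetrized factors $\E e^{2\pi i\Re(\xi v_{j}x_{j})}=(1+|\phi_{v_{j}z_{j}}(\xi)|^{2})/2$ are real and nonnegative, so that the bound $|\phi|\le(1+|\phi|^{2})/2$ can be used without absolute-value gymnastics when passing it inside a product; this is immediate from the symmetry of $z_{1}-z_{2}$, which forces all relevant characteristic functions to be real-valued.
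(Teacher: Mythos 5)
Your proof is correct and is exactly the standard Esseen/Halász-type Fourier argument; the paper itself does not reprove this lemma but simply cites Lemma~5.2 of Tao--Vu, where this same strategy (pointwise Gaussian domination of the ball indicator, the bound $|\phi|\le(1+|\phi|^{2})/2$, symmetrization via $\epsilon(z_{1}-z_{2})$, and the elementary estimate $|\sin\pi\theta|\ge 2\|\theta\|_{\R/\Z}$) appears. You even obtain the sharper exponent $\exp(-4\|v_{i}\xi\|_{z}^{2})$, which trivially implies the stated bound with constant $1/2$.
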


\begin{proof}[Proof of \cref{thm:LCD-controls-sbp}]
Since $\rho_{\delta,z}(\boldsymbol{a}) = \rho_{1,z}(\delta^{-1}\boldsymbol{a})$, it suffices to bound $\rho_{1,z}(\boldsymbol{v})$ for $\boldsymbol{v}:= \delta^{-1}\boldsymbol{a}$.
Let $\boldsymbol{w}\in \C^{2n}$ denote the vector whose first $n$ components are $\boldsymbol{v}$ and last $n$ components are $i\boldsymbol{v}$. Then, we have
  \begin{align*}
      \rho_{1,z}(\boldsymbol{v})^{2}
      &= \rho_{1,z}(\boldsymbol{v})\rho_{1,z}(i\boldsymbol{v})\\
      &\leq \exp(2\pi)P_{z}(\boldsymbol{v})P_{z}(i\boldsymbol{v})\\
      &\leq 2\exp(2\pi)P_{z}(\boldsymbol{w})\\
      &\leq 2\exp(2\pi)\int_{\C}\exp\left(-\sum_{j=1}^{n}\left(\|v_{j}\xi\|_{z}^{2} + \|i v_{j}\xi\|_{z}^{2}\right)/2 - \pi |\xi|^{2}\right)d\xi,
  \end{align*}
  where the first line uses $\rho_{1,z}(\boldsymbol{v}) = \rho_{1,z}(i\boldsymbol{v})$, the second line is due to \cref{lemma:initial-fourier-bound}, the third line follows from Lemma 4.5(iii) in \cite{tao2008random}, and the last line is again due to \cref{lemma:initial-fourier-bound}.
  
  Next, note that
  \begin{align*}
      \sum_{j=1}^{n}\left(\|v_j \xi\|_{z}^{2} + \|i v_j \xi\|_{z}^{2}\right)&=
      \E\sum_{j=1}^{n}\left(\|\Re\{v_j \xi (z_1 - z_2)\}\|_{\R/\Z}^{2} + \|\Re\{iv_j \xi (z_1-z_2)\}\|_{\R/\Z}^{2}\right)\\
      &= \E\sum_{j=1}^{n}\left(\|\Re\{v_j \xi (z_1 - z_2)\}\|_{\R/\Z}^{2} + \|\Im\{v_j \xi (z_1-z_2)\}\|_{\R/\Z}^{2}\right)\\
      &= \E\left[ \dist^{2}\left(\boldsymbol{v} \xi (z_1-z_2), (\Z + i\Z)^{n}\right)\right]\\
      &\geq \E\left[ \dist^{2}\left(\boldsymbol{v} \xi (z_1-z_2), (\Z + i\Z)^{n}\right) \bigg\vert |z_1 - z_2| \in [C_z^{-1}, C_{z}]\right]C_{z}^{-1},
  \end{align*}
  where the final inequality follows from the $C_z$-goodness of $z$.
  
  Therefore, from Jensen's inequality, we get that
  \begin{align}
      \rho_{1,z}(\boldsymbol{v})^{2}&
      \leq 2\exp(2\pi)\E \left[\int_{\C}\exp(-C_{z}^{-1}\dist^{2}\left(\boldsymbol{v} \xi (z_1-z_2), (\Z + i\Z)^{n}\right)/2 - \pi |\xi|^{2})d\xi \bigg\vert |z_1 - z_2| \in [C_{z}^{-1}, C_{z}] \right] \nonumber \\
      &\leq 2\exp(2\pi )\sup_{|y| \in [C_z^{-1}, C_{z}]}\int_{\C}\exp(-C_{z}^{-1}\dist^{2}\left(\boldsymbol{v} \xi y, (\Z + i\Z)^{n}\right)/2 - \pi |\xi|^{2})d\xi.
      \label{eqn:fourier-bound-appendix}
  \end{align}

Now, fix $y_0 \in \C$ with $|y_0| \in [C_z^{-1}, C_{z}]$; we will obtain a uniform (in $y_0$) upper bound on the integral appearing in \cref{eqn:fourier-bound-appendix}. Let
$$A:=\{\xi \in \C \mid \dist(\boldsymbol{v}\xi y_0, (\Z + i\Z)^{n}) \geq \alpha/2\}\cup \{\xi \in \C \mid |\xi| \geq \alpha\},$$
let $B:= \C \setminus A = B(0,\alpha)\setminus A$,
and split the integral above as
$$\int_{\C} = \int_{A} + \int_{B}.$$
Since 
$$\int_{A} \lesssim \exp\left(-\Omega_{C_{z}}({\alpha^{2}})\right),$$
it only remains to bound $\int_{B}$. 

For this, we begin by noting that if $\xi', \xi'' \in B$, then by the triangle inequality and the lattice structure of the Gaussian integers,
$$\dist\left(\boldsymbol{a}\delta^{-1}(\xi' - \xi'')y_0, (\Z+i\Z)^{n}\right) = \dist\left(\boldsymbol{v}(\xi' - \xi'')y_0, (\Z+i\Z)^{n}\right) < \alpha.$$
Hence, by the definition of $\LCD_{\gamma,\alpha}(\boldsymbol{a})$, we have one of two possibilities: either 
$$\delta^{-1}C_{z}|\xi' - \xi''| \geq \delta^{-1}|y_0||\xi' - \xi''| \geq \LCD_{\gamma, \alpha}(\boldsymbol{a})$$ or 
$$\gamma |\xi' - \xi''|\delta^{-1}C_{z}^{-1} \leq \gamma |\xi' - \xi''| \delta^{-1} |y_0| < \dist(\boldsymbol{v}(\xi' - \xi'')y_0, (\Z + i\Z)^{n}) < \sqrt{n}.$$
It follows that $B$ is contained in a union of balls of radius $C_z\sqrt{n}\delta/\gamma$ whose centers are separated by at least $\delta\LCD_{\gamma,\alpha}(\boldsymbol{a})/C_{z}$. Each such ball can contribute at most $\pi C_{z}^{2}n\delta^{2}/\gamma^{2}$ to the integral, and since $\delta \LCD_{\gamma,\alpha}(\boldsymbol{a}) \gg \alpha$, there is at most one such ball in $B$. It follows that
$$\int_{B} \leq \frac{\pi C_{z}^{2}n\delta^{2}}{\gamma^{2}}.$$
Finally, combining the estimates on $\int_{A}$ and $\int_{B}$ and using \cref{eqn:fourier-bound-appendix} completes the proof.
\end{proof}

\subsection{The counting problem in inverse Littlewood-Offord theory}
\label{sec:counting-problem}
The inverse Littlewood-Offord problem, posed by Tao and Vu \cite{tao2009inverse}, asks for the underlying reason that the L\'evy concentration function of a vector $\boldsymbol{v}\in \C^{n}$ can be large. Using deep Frieman-type results from additive combinatorics, they showed that, roughly speaking, the only reason for this to happen is that most of the coordinates of the vector $\boldsymbol{v}$ belong to a generalized arithmetic progression (GAP) of `small rank' and `small volume'. Their results \cite{tao2009inverse, tao2010sharp} were subsequently sharpened by Nguyen and Vu~\cite{nguyen2011optimal}, who proved an `optimal inverse Littlewood--Offord theorem'. We refer the reader to the survey \cite{nguyen2013small} and the textbook \cite{tao2006additive} for complete definitions and statements, and much more on both forward and inverse Littlewood-Offord theory. 

 Recently, motivated by applications, especially those in random matrix theory, the following \emph{counting variant} of the inverse Littlewood--Offord problem was isolated in work \cite{FJLS2018} of the author along with Ferber, Luh, and Samotij: for \emph{how many} vectors $\boldsymbol{a}$ in a given collection $\mathcal{A}\subseteq \Z^{n}$ is $\rho_{1,z}(\boldsymbol{a})$ greater than some prescribed value, where $z$ is a symmetric Bernoulli random variable? Indeed, the inverse Littlewood-Offord theorems are typically used precisely through such counting corollaries \cite{nguyen2013small}, and one of the main contributions of \cite{FJLS2018} (see Theorem 1.7 there) was to show that one may obtain useful bounds for the counting variant of the inverse Littlewood-Offord problem directly, \emph{without} providing a precise structural characterization like Tao-Vu. In fact, since this approach is not hampered by losses coming from the black-box application of various theorems from additive combinatorics, it provides quantitatively better bounds, and this was used in  \cite{FJLS2018, ferber2018singularity, jain2019combinatorial} to provide quantitative improvements for several problems in combinatorial random matrix theory. 
 
 
 
 A crucial ingredient in our proof will be the following extension of Theorem 1.7 of \cite{FJLS2018} due to the author 
 \begin{theorem}[Theorem 1.3 in \cite{jain2019c}]
\label{thm:counting-continuous}
Let $z$ be a $C_z$-good random variable.  For $\rho \in (0,1)$ (possibly depending on $n$), let
$$\boldsymbol{V}_{\rho} :=\left\{\boldsymbol{v}\in (\Z+i\Z)^{n}: \rho_{1,z}(\boldsymbol{v}) \geq \rho\right\}.$$ There exists a constant $C_{\ref{thm:counting-continuous}} \geq 1$, depending only on $C_z$, for which the following holds. Let $n,s,k\in \N$ with $1000C_{z} \leq k\leq \sqrt{s}\leq s \leq n/\log{n}$. If $\rho \geq C_{\ref{thm:counting-continuous}}\max\left\{ e^{-s/k},s^{-k/4}\right\}$ and $p$ is an odd prime such that $2^{n/s}\geq p \geq C_{\ref{thm:counting-continuous}}\rho^{-1}$, then 
$$\left|\varphi_p(\boldsymbol{V}_\rho)\right| \leq \left(\frac{5np^{2}}{s}\right)^{s} + \left(\frac{C_{\ref{thm:counting-continuous}}\rho^{-1}}{\sqrt{s/k}}\right)^{n},$$
where $\varphi_p$ denotes the natural map from $(\Z+i\Z)^{n}\to (\F_p+i\F_p)^{n}$.
\end{theorem}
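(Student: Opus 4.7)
The plan is to adapt the combinatorial counting strategy from Theorem~1.7 of \cite{FJLS2018}---originally designed for real symmetric Bernoulli random variables---to the complex $C_z$-good setting. The two terms on the right-hand side of the bound correspond to a dichotomy: I would split $\varphi_p(\boldsymbol{V}_\rho)$ according to the size of the support of $\boldsymbol{w}:=\varphi_p(\boldsymbol{v})$ as an element of $(\F_p+i\F_p)^n$. The sparse class, those $\boldsymbol{w}$ with at most $s$ nonzero coordinates, contributes at most $\binom{n}{s}(p^2-1)^s \leq (5np^2/s)^s$, matching the first term. The spread class, those with more than $s$ nonzero coordinates, must be controlled via an anti-concentration/Markov argument producing the second term.

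To carry this out, the first step is to pass from the continuous anti-concentration inequality to a purely modular one. Starting from \cref{lemma:initial-fourier-bound}, I would use $C_z$-goodness to condition the random variable $(z_1-z_2)\Ber(1/2)$ on the event $|z_1-z_2|\in[C_z^{-1},C_z]$ and then discretize to a fine sub-lattice of the Gaussian integers inside $B(0,C_z)$, at the cost of only a constant factor. The hypothesis $p\geq C_{\ref{thm:counting-continuous}}\rho^{-1}$ makes the subsequent reduction modulo $p$ essentially lossless and yields, for every $\boldsymbol{v}\in\boldsymbol{V}_\rho$ and appropriate $k$,
\[
\rho^{2k} \lesssim \Pr_{\boldsymbol{\epsilon}^{(1)},\ldots,\boldsymbol{\epsilon}^{(2k)}}\!\bigl[\,\boldsymbol{w}\cdot(\boldsymbol{\epsilon}^{(1)}+\cdots-\boldsymbol{\epsilon}^{(2k)})\equiv 0 \pmod p\,\bigr],
\]
with the $\boldsymbol{\epsilon}^{(i)}$ i.i.d.\ copies of the discretized variable. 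For the spread class I would apply Markov's inequality to this probability, expand via characters of $\F_{p^2}^n$, and use orthogonality to reduce the expectation to an Esseen-type bound in the spirit of \cref{claim:anticonc-W}: each of the $\geq s$ support coordinates contributes a factor of order $\sqrt{k/s}$, while the Markov loss contributes $\rho^{-2k}$, together producing the $(C\rho^{-1}/\sqrt{s/k})^n$ term. The hypotheses $k\in[1000C_z,\sqrt{s}]$, $s\leq n/\log n$, $p\leq 2^{n/s}$, and $\rho\geq C\max\{e^{-s/k},s^{-k/4}\}$ are exactly what is needed to balance the moment--Markov tradeoff and to ensure the two competing terms come out as stated.

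The principal obstacle, compared to \cite{FJLS2018}, is the discretization step just sketched: in the $\pm 1$ Bernoulli case the auxiliary variable is already lattice-valued and reduction modulo $p$ is automatic, whereas here $(z_1-z_2)\Ber(1/2)$ is genuinely continuous on $\C$. The $C_z$-good hypothesis is precisely what enables the required conditioning, after which a fine discretization of $B(0,C_z)$ at scale $\sim 1/p$ must be carried out with care to preserve both the Fourier inequality and the validity of the mod-$p$ reduction. A secondary technical point is that characters of $\F_p+i\F_p$ produce $p^2$ frequencies per coordinate rather than $p$, which is the source of the $p^2$ appearing in the first term; tracking this factor consistently through the combinatorial enumeration is essential for obtaining the precise form of the claimed bound.
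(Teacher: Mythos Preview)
The paper does not contain a proof of this theorem. It is stated in Section~\ref{sec:counting-problem} as an external ingredient, explicitly attributed as ``Theorem 1.3 in \cite{jain2019c}'', a follow-up work of the author; the present paper only \emph{uses} it (in the proof of \cref{claim:size-of-W_t}) and makes no attempt to prove it. So there is no ``paper's own proof'' to compare your proposal against.

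That said, your outline is a reasonable sketch of how such a result is proved. The dichotomy on $|\supp(\varphi_p(\boldsymbol{v}))|$ with threshold $s$, the trivial enumeration for the sparse class yielding $(5np^2/s)^s$, and a Hal\'asz/moment argument for the spread class are exactly the skeleton of the FJLS approach you cite. The genuinely new work, as you correctly identify, is the passage from the continuous L\'evy concentration $\rho_{1,z}$ to a statement about counts modulo $p$; the $C_z$-goodness hypothesis is there precisely to allow conditioning $|z_1-z_2|$ into a bounded annulus before discretizing. One small inaccuracy: $(\F_p+i\F_p)$ is not always the field $\F_{p^2}$ (it is $\F_p\times\F_p$ when $-1$ is a square mod $p$), so ``characters of $\F_{p^2}^n$'' should be replaced by characters of the additive group $(\F_p+i\F_p)^n\cong(\Z/p\Z)^{2n}$; this does not affect the $p^2$ count per coordinate, but the phrasing should be corrected.
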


\begin{remark}
The inverse Littlewood-Offord theorems may be used to deduce similar statements, \emph{provided we further assume that $\rho \geq n^{-C}$ for some constant $C>0$}. It is the freedom  of taking $\rho$ to be much smaller which allows us to obtain the exponential-type rate in \cref{thm:main}.
\end{remark}

\subsection{Norms of large projections of random matrices}
The key difficulty with extending the geometric techniques of Rudelson and Vershynin \cite{rudelson2008littlewood, rudelson2010non}) to the setting when the random variables have heavy tails is the lack of control on the operator norm of the random matrix. For our techniques, the following proposition will turn out to be an appropriate substitute for controlling the operator norm. 

For a subset $I\subseteq[n]$, let $P_I:\C^{n}\to \C^{n}$ denote the orthogonal projection onto the subspace spanned by the vectors $\{e_i:i\in I\}$. We have:
\begin{proposition}
\label{prop:operator-norm-control}
Let $N_n:=(m_{ij})$ be an $n\times n$ complex random matrix with i.i.d. entries, each with mean $0$ and variance $1$. For $\epsilon,\delta \in (0,1/2)$ with $\delta \geq 4\epsilon$, there exists $C_{\ref{prop:operator-norm-control}}(\epsilon) \geq 1$ such that, except with probability at most 
$C_{\ref{prop:operator-norm-control}}(\epsilon) \exp\left(-{n^{1-\epsilon}}/{8}\right)$, the following hold.  
\begin{enumerate}
    \item There exists $I \subseteq[n]$ with $|I| \geq n-2n^{1-\epsilon}$ such that 
    $$\|P_{I}N_{n}\|_{\infty \to 2} \leq C_{\ref{prop:operator-norm-control}}(1) n^{1+\epsilon}.$$
    \item For every $J\subseteq [n]$ with $|J|= n^{1-\delta}$, there exists some $I(J)\subseteq [n]$ such that $|I(J)|\geq n-2n^{1-\epsilon}$, and 
    $$\|P_{I(J)} N_n P_{J}\|_{\infty \to 2} \leq C_{\ref{prop:operator-norm-control}}(1)n^{1 + \epsilon - 0.5\delta}.$$
\end{enumerate}
\end{proposition}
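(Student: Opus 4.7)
The plan is a three-step pattern of truncation, centering, and concentration. For part (1), fix the threshold $T := n^{(1+\epsilon)/2}$; for part (2), use instead $T_J := n^{(1-\delta+\epsilon)/2}$ for the submatrix of columns in $J$. Call an entry $m_{ij}$ \emph{heavy} if $|m_{ij}|$ exceeds the relevant threshold. Markov's inequality applied to $\E|m_{ij}|^2 = 1$ gives $\Pr(|m_{ij}|>T) \leq T^{-2}$, so the expected number of heavy entries in the relevant submatrix is at most $n^{1-\epsilon}$; a Chernoff bound on the Bernoulli indicators shows that the count of heavy entries exceeds $2n^{1-\epsilon}$ only with probability $\exp(-n^{1-\epsilon}/8)$. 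Let $I$ (resp.\ $I(J)$) be the complement of the rows containing a heavy entry, so $|I| \geq n-2n^{1-\epsilon}$ on this good event, and on this event the projected matrix $P_I N_n$ (resp.\ $P_{I(J)} N_n P_J$) coincides with the corresponding truncated matrix. Decomposing the truncated matrix as (truncated-entry mean) plus a centered matrix $\widetilde N$, the first term is rank-one with $\infty\to 2$-norm at most $n^{3/2}/T \leq n^{1-\epsilon/2}$ (since the truncated-entry mean has modulus at most $1/T$ by Cauchy--Schwarz), and the second has iid mean-zero entries bounded by $2T$ (resp.\ $2T_J$) with variance at most $1$.

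The remaining task is to show $\|\widetilde N\|_{\infty\to 2} \lesssim n^{1+\epsilon}$ in part (1), and analogously $\|\widetilde N P_J\|_{\infty \to 2} \lesssim n^{1+\epsilon-\delta/2}$ in part (2). By convexity of $x \mapsto \|\widetilde N x\|_2$, the defining supremum is attained on the polytorus $\{x : |x_j|=1\}$ (or its restriction to $J$), which admits a $1/2$-net $\mathcal N$ of cardinality at most $C^n$ (resp.\ $C^{|J|}$) in the $\ell^\infty$ metric. For fixed $x \in \mathcal N$, write $\|\widetilde N x\|_2^2 = \sum_i Y_i^2$, where the $Y_i = \sum_j \widetilde m_{ij} x_j$ are independent across $i$, each a sum of bounded centered independent variables with variance at most $1$. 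The plan is to apply Rosenthal's inequality at a suitable moment index $p$ to control first $\E Y_i^{2p}$ and then $\E(\sum_i Y_i^2)^p$, and to deduce a per-point failure probability small enough for the union bound over $\mathcal N$ to succeed. For part (2), we additionally union bound over the $\binom{n}{|J|}\leq n^{|J|}$ choices of $J$; the hypothesis $\delta \geq 4\epsilon$ ensures that the various exponents appearing in the Rosenthal estimate remain in the regime where this extra union bound is absorbed.

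The main obstacle lies in the concentration step. Since the entries of $N_n$ have only finite second moment, the truncation threshold $T \gg \sqrt n$ forces each row-sum $Y_i$ to be sub-exponential (rather than sub-Gaussian) with parameter $T$. Standard Bernstein- and Talagrand-type inequalities therefore yield per-point failure probability no better than $\exp(-n^\epsilon)$, which is far too weak to beat the $C^n$ entropy of the net. The moment method at a moment index $p$ comparable to the net dimension is intended to overcome this limitation by producing super-polynomial per-point decay; however, the individual $Y_i^2$ still carry the sub-exponential tail, so the trade-off between the two terms in Rosenthal's bound (Gaussian-like versus heavy-tail) requires a careful choice of $p$. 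Getting this balance to produce, simultaneously, decay strong enough to beat the $C^n$ net and a total error no worse than $\exp(-n^{1-\epsilon}/8)$, is the central calculation.
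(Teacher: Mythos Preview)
Your plan diverges from the paper's, and the step you yourself flag as the ``central calculation'' is where it breaks. After truncating at $T=n^{(1+\epsilon)/2}$, the centered entries are bounded only by $2T$, so each row sum $Y_i=\sum_j \widetilde m_{ij}x_j$ is genuinely sub-exponential with parameter $\sim T$ rather than sub-Gaussian with parameter $\sqrt n$. A direct moment calculation shows this cannot be pushed through the net: Rosenthal gives $\E|Y_i|^{2p}\lesssim K_{2p}\bigl(n^p+n(2T)^{2p-2}\bigr)$ with $K_{2p}\sim (Cp)^{2p}$, and feeding this into a second application for $S=\sum_i |Y_i|^2$ yields
\[
\frac{\E|S-\E S|^p}{(Cn^{2+2\epsilon})^p}\ \lesssim\ \bigl(C'p\,n^{-(1+3\epsilon)/2}\bigr)^p+n\bigl(C'p^3 n^{-1-\epsilon}\bigr)^p.
\]
Neither term can be made $\le e^{-cn}$: the first forces $p\ll n^{1/2+3\epsilon/2}$, the second $p\ll n^{(1+\epsilon)/3}$, so the best decay either yields is $\exp(-n^{1/2+o(1)})$, far short of what the $C^n$-net demands. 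In fact for a single row one can have $\Pr(|Y_i|^2>Cn^{2+2\epsilon})\gtrsim\exp(-n^{(1+\epsilon)/2+o(1)})$, so the pointwise $e^{-cn}$ bound you need is simply false at this level of generality.

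The paper evades this barrier by a completely different mechanism. Instead of truncating entries, it conditions (via Markov row by row, then Chernoff over rows) on the event that all but $2n^{1-\epsilon}$ rows have $\ell_2$-norm and row-sum at most $n^{\epsilon}\sqrt m$; this costs the claimed $\exp(-n^{1-\epsilon}/4)$. The key observation is then that the i.i.d.\ law of $N_n$ is invariant under independently permuting the entries within each row, so it suffices to bound $\|P_I\tilde A\|_{\infty\to 2}$ for a \emph{fixed} matrix $A$ meeting the row hypotheses, with $\tilde A$ obtained by random row-permutations. For fixed $v\in\{\pm1\}^m$ and fixed row $\boldsymbol b_i$, Talagrand's concentration on $S_m$ makes $X_i=\sum_j v_{\pi_i(j)}b_{ij}$ \emph{sub-Gaussian} with parameter $\|\boldsymbol b_i\|_2\le n^{\epsilon}\sqrt m$ --- the entry magnitudes are absorbed into the row $\ell_2$-norm, which is precisely what your entrywise truncation fails to achieve. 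Standard sub-Gaussian concentration for $\sum_i|X_i|^2$ then gives per-point failure probability $e^{-2n}$, beating the $2^m$-net; the row-sum hypothesis is used to control $|\E X_i|$. Part~2 is the same argument applied to the $n\times|J|$ submatrix, followed by a union bound over $J$, which is where $\delta\ge 4\epsilon$ enters.
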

\begin{remark}
A statement similar to the one above, and with some common proof ideas, already appears in the work of Rebrova and Vershynin \cite{rebrova2018norms}. In that work, the primary interest is in obtaining optimal bounds on the restricted operator norms and consequently, the proofs are much more involved. In contrast, we do not require such optimal bounds for our application, and are therefore able to give a much shorter proof of the above proposition.  
\end{remark}

The complete proof of this proposition is deferred to \cref{sec:proof-operator-norm}.

\section{Proof of \cref{thm:main}}
\label{sec:proof-main}
In this section, we will take $\alpha:= n^{1/100}$ and $\gamma := n^{-1/2}$. Moreover, since \cref{thm:main} is trivially true for $\eta \geq n^{-2}$, we will henceforth assume that $2^{-n^{0.0001}} \leq \eta < n^{-2}$. Recall that $M$ is a fixed $n\times n$ matrix with operator norm at most $n^{0.51}$; we set $M_n:= M+N_n$.\\

We decompose $\S^{2n-1}$ into $\Gamma^{1}(\eta) \cup \Gamma^{2}(\eta)$, where
$$\Gamma^{1}(\eta):= \left\{\boldsymbol{a}\in \S^{2n-1}: \LCD_{\alpha,\gamma}(\boldsymbol{a}) \geq n^{0.7}\cdot \eta^{-1} \right\} $$
and $\Gamma^{2}(\eta) := \S^{2n-1}\setminus \Gamma^{1}(\eta)$. Accordingly, we have
\begin{align*}
    \Pr\left(s_n(M_n)\leq \eta\right) \leq \Pr\left(\exists \boldsymbol{a}\in \Gamma^{1}(\eta): \|M_{n}\boldsymbol{a}\|_{2} \leq \eta\right) + \Pr\left(\exists \boldsymbol{a}\in \Gamma^{2}(\eta): \|M_{n}\boldsymbol{a}\|_{2} \leq \eta\right).
\end{align*}
Therefore, \cref{thm:main} follows from the following two propositions and the union bound. 
\begin{proposition}
\label{prop:eliminate-large-LCD}
$\Pr\left(\exists \boldsymbol{a} \in \Gamma^{1}(\eta): \|M_{n}\boldsymbol{a}\|_{2}\leq \eta \right) \leq 2nC_{\ref{thm:LCD-controls-sbp}}\left(n^{3/2}\eta + \exp(-C_{\ref{thm:LCD-controls-sbp}}^{-1}n^{1/50})\right)$. 
\end{proposition}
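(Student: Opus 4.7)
The strategy is a column-distance argument combined with the anti-concentration bound of \cref{thm:LCD-controls-sbp}. Let $c_1, \dots, c_n$ denote the columns of $M_n$, let $V_k := \mathrm{span}\{c_j : j \ne k\} \subset \C^n$, and let $\boldsymbol{v}_k^* \in \S^{2n-1}$ be the unit normal to $V_k$ (well-defined generically). If $\boldsymbol{a} \in \Gamma^1(\eta) \cap \S^{2n-1}$ satisfies $\|M_n \boldsymbol{a}\|_2 \le \eta$, then $\sum_k |a_k|^2 = 1$ forces $|a_k| \ge n^{-1/2}$ for at least one $k$, and expanding $M_n \boldsymbol{a} = \sum_j a_j c_j$ then yields
\[
|a_k| \cdot \dist(c_k, V_k) \;\le\; \|M_n \boldsymbol{a}\|_2 \;\le\; \eta, \qquad \text{hence} \qquad \dist(c_k, V_k) \le \sqrt{n}\,\eta.
\]
Taking a union bound over $k$ reduces the problem to bounding, for each fixed $k$, the probability of $\{\dist(c_k, V_k) \le \sqrt{n}\,\eta\}$ together with a suitable LCD condition on $\boldsymbol{v}_k^*$.

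For each $k$, conditioning on $\{c_j : j \ne k\}$ makes $\boldsymbol{v}_k^*$ deterministic. Writing $c_k = c_k^M + c_k^N$ with $c_k^N$ having i.i.d.\ entries distributed as $z$, we have $\dist(c_k, V_k) = |c_k \cdot \boldsymbol{v}_k^*|$, so this conditional probability is at most $\rho_{\sqrt{n}\eta, z}(\boldsymbol{v}_k^*)$. Substituting the parameter choices $\gamma = n^{-1/2}$, $\alpha = n^{1/100}$ into \cref{thm:LCD-controls-sbp} at scale $\delta = \sqrt{n}\,\eta$, and provided the mild requirement $\LCD_{\gamma/2, \alpha/2}(\boldsymbol{v}_k^*) \gtrsim n^{-0.39}\eta^{-1}$ holds (which is far weaker than the $\Gamma^1$ threshold $n^{0.7}\eta^{-1}$ that we are assuming on $\boldsymbol{a}$), one obtains
\[
\rho_{\sqrt{n}\eta, z}(\boldsymbol{v}_k^*) \;\le\; C_{\ref{thm:LCD-controls-sbp}} \bigl( n^{3/2}\eta \;+\; \exp(-C_{\ref{thm:LCD-controls-sbp}}^{-1} n^{1/50}) \bigr).
\]
Summing over $k$ contributes a factor of $n$, and the factor of $2$ in the statement absorbs the failure probability of the ``good event'' referenced below.

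The main obstacle is therefore justifying the weak LCD bound on the column normal $\boldsymbol{v}_k^*$ under the assumption that some $\boldsymbol{a} \in \Gamma^1(\eta)$ with $|a_k| \ge n^{-1/2}$ exists. The plan is to exploit the matrix-level identity
\[
P_{V_k^\perp}(M_n \boldsymbol{a}) \;=\; a_k \, \dist(c_k, V_k) \, \boldsymbol{v}_k^* \quad (\text{up to phase}),
\]
together with an $\ell^2$-Lipschitz stability of $\LCD$: if $\|\boldsymbol{u} - \lambda \boldsymbol{v}\|_2 \le \delta$ with $|\lambda| \approx 1$, then (after halving $\gamma$ and $\alpha$) $\LCD(\boldsymbol{v})$ is bounded below by $\min(\LCD(\boldsymbol{u}), \alpha/(4\delta))$. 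Since the threshold required on $\boldsymbol{v}_k^*$ is much weaker than the $\Gamma^1$ threshold on $\boldsymbol{a}$, a perturbation $\delta$ as large as $n^{0.4}\eta$ is affordable, and controlling this perturbation via the identity above in turn reduces to a mild lower bound $\sigma_{n-1}(M_n^{(-k)}) \ge n^{0.1}$ on the smallest non-trivial singular value of the column-deleted submatrix, which holds except with probability $\exp(-n^c)$ by standard random-matrix estimates applied to a heavy-tailed matrix, robust under the bounded-norm shift $\|M\| \le n^{0.51}$.
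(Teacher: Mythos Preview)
Your column-distance reduction is correct up to the point where you need anti-concentration for $\boldsymbol{v}_k^*$, but the attempted transfer of the LCD hypothesis from $\boldsymbol{a}$ to $\boldsymbol{v}_k^*$ is a genuine gap. These two unit vectors live in different copies of $\C^n$: $\boldsymbol{a}$ is a domain vector (a near right-null vector of $M_n$), while $\boldsymbol{v}_k^*$ is an image-side vector (the left-null vector of the column-deleted matrix $M_n^{(-k)}$). The identity $P_{V_k^\perp}(M_n\boldsymbol{a})=a_k\,\dist(c_k,V_k)\,\boldsymbol{v}_k^*$ is just the statement that $V_k^\perp$ is one-dimensional; it relates $\boldsymbol{v}_k^*$ only to the direction of $M_n\boldsymbol{a}$, a vector of norm at most $\eta$ whose direction carries no usable information, not to $\boldsymbol{a}$ itself. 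An $\ell^2$-stability lemma for $\LCD$ would require $\|\boldsymbol{v}_k^*-\lambda\boldsymbol{a}\|_2$ to be small for some scalar $\lambda$, and nothing in your setup gives this. The bound $\sigma_{n-1}(M_n^{(-k)})\ge n^{0.1}$ you invoke controls how $M_n^{(-k)}$ acts on domain vectors, not the arithmetic structure of its cokernel vector $\boldsymbol{v}_k^*$; and in any case such a bound for heavy-tailed entries with a shift of norm $n^{0.51}$ is of the same difficulty as the proposition you are proving.

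The paper sidesteps this entirely by working on the row side. After a union bound over which coordinate of the \emph{left} near-null vector $\boldsymbol{a}'$ is largest (say the $n$th), it exposes rows $X_1,\dots,X_{n-1}$ and selects, measurably in these rows, a vector $\boldsymbol{y}\in\Gamma^1(\eta)$ with $\bigl(\sum_{i<n}|X_i\cdot\boldsymbol{y}|^2\bigr)^{1/2}\le\eta$; such a $\boldsymbol{y}$ exists on the event in question because $\boldsymbol{a}$ itself qualifies. A short Cauchy--Schwarz manipulation using $|a'_n|\ge n^{-1/2}$ then forces $|X_n\cdot\boldsymbol{y}|\le 2\sqrt{n}\,\eta$. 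Since $\boldsymbol{y}$ is fixed given $X_1,\dots,X_{n-1}$ and belongs to $\Gamma^1(\eta)$ by construction, \cref{thm:LCD-controls-sbp} applies directly to $\rho_{2\sqrt{n}\eta,z}(\boldsymbol{y})$ with no LCD transfer needed. If you wish to salvage the column-distance route, you must establish large $\LCD(\boldsymbol{v}_k^*)$ independently---essentially by running the analysis behind \cref{prop:eliminate-small-LCD} for $(M_n^{(-k)})^\dagger$---rather than trying to inherit it from the hypothesis on $\boldsymbol{a}$.
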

\begin{proposition}
\label{prop:eliminate-small-LCD}
$\Pr\left(\exists \boldsymbol{a} \in \Gamma^{2}(\eta): \|M_{n}\boldsymbol{a}\|_{2}\leq \eta \right) \leq C_{\ref{prop:eliminate-small-LCD}}\left(e^{-n^{0.98}} +  \exp(-c_{\ref{prop:eliminate-small-LCD}}n)\right),$
where $C_{\ref{prop:eliminate-small-LCD}}\geq 1$ and $c_{\ref{prop:eliminate-small-LCD}}>0$ are constants depending only on $z$.
\end{proposition}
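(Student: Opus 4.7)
My plan is to use the standard ``distance-via-invertibility'' reduction together with the LCD-based anti-concentration of \cref{thm:LCD-controls-sbp}. If some unit vector $\boldsymbol{a}$ satisfies $\|M_n\boldsymbol{a}\|_2\leq\eta$, then by the pigeonhole principle there exists $i^*\in[n]$ with $|a_{i^*}|\geq 1/\sqrt n$. Writing $M_n\boldsymbol{a}=\sum_j a_j C_j$ with $C_j$ the $j$-th column of $M_n$, and setting $H_{i^*}:=\mathrm{span}\{C_j:j\neq i^*\}$, the identity $\|M_n\boldsymbol{a}\|_2\geq|a_{i^*}|\cdot\dist(C_{i^*},H_{i^*})$ yields $\dist(C_{i^*},H_{i^*})\leq\sqrt n\,\eta$. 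A union bound over $i^*\in[n]$ reduces the task to bounding, for each fixed $i$, the probability of $\dist(C_i,H_i)\leq\sqrt n\,\eta$ jointly with the witness condition $(\dagger)_i$: some $\boldsymbol{a}\in\Gamma^1(\eta)$ with $|a_i|\geq 1/\sqrt n$ exists.

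For each fixed $i$, I would condition on $\{C_j:j\neq i\}$. Outside a measure-zero set (absorbed into the factor of $2$), the unit normal $\boldsymbol{b}_i\in H_i^\perp$ is determined up to phase and is measurable with respect to the conditioning, while $C_i=M_i^c+X_i$ has $X_i$ with i.i.d.\ entries distributed as $z$, independent of $\boldsymbol{b}_i$. A L\'evy-concentration estimate then gives
\[
\Pr\bigl(\dist(C_i,H_i)\leq\sqrt n\,\eta\bigm|\{C_j\}_{j\neq i}\bigr)=\Pr\bigl(|\langle\boldsymbol{b}_i,C_i\rangle|\leq\sqrt n\,\eta\bigm|\{C_j\}_{j\neq i}\bigr)\leq\rho_{\sqrt n\eta,z}(\boldsymbol{b}_i).
\]
Invoking \cref{thm:LCD-controls-sbp} with $\alpha=n^{1/100}$, $\gamma=n^{-1/2}$, and $\delta=\sqrt n\,\eta$ then outputs exactly the per-$i$ target $C_{\ref{thm:LCD-controls-sbp}}\bigl(n^{3/2}\eta+\exp(-C_{\ref{thm:LCD-controls-sbp}}^{-1}n^{1/50})\bigr)$, provided $\LCD_{\alpha,\gamma}(\boldsymbol{b}_i)\gtrsim n^{-0.39}/\eta$---a threshold much weaker than the $\Gamma^1$ threshold $n^{0.7}/\eta$ enjoyed by $\boldsymbol{a}$.

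The main obstacle is therefore an \emph{LCD-transfer} step: on the witness event $(\dagger)_i$, one must verify that the normal $\boldsymbol{b}_i$ has LCD above the mild threshold $n^{-0.39}/\eta$. Since $H_i^\perp$ is one-dimensional, $\boldsymbol{b}_i$ is (up to phase) the normalization of the projection $P_{H_i^\perp}\boldsymbol{a}$, and the plan is to use the witness condition $|a_i|\geq 1/\sqrt n$ together with the identity $|a_i|\cdot|\langle\boldsymbol{b}_i,C_i\rangle|=|\boldsymbol{b}_i^{*}M_n\boldsymbol{a}|\leq\|M_n\boldsymbol{a}\|_2\leq\eta$ to ensure that this projection is non-degenerate and that $\LCD(\boldsymbol{b}_i)$ degrades from $\LCD(\boldsymbol{a})\geq n^{0.7}/\eta$ by at most polynomial factors, leaving ample slack. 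Making this LCD-transfer quantitatively rigorous---in particular, carefully bounding the loss under normalization onto a one-dimensional subspace and checking robustness of the LCD under the resulting perturbation at scale $\gamma=n^{-1/2}$---is the key technical step I expect to wrestle with. Once in place, summing the per-$i$ bounds over $i\in[n]$, together with the factor of $2$ absorbing the degenerate conditioning and small numerical slack, yields the claimed bound.
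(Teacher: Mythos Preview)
Your proposal is aimed at the wrong proposition. The statement concerns $\Gamma^{2}(\eta)$, the set of unit vectors with \emph{small} LCD (at most $n^{0.7}/\eta$), and the target bound is $C\bigl(e^{-n^{0.98}}+\exp(-cn)\bigr)$ with no $\eta$-term at all. Your argument, however, explicitly assumes the witness $\boldsymbol{a}\in\Gamma^{1}(\eta)$, appeals to the lower bound $\LCD(\boldsymbol{a})\geq n^{0.7}/\eta$, and targets the bound $C_{\ref{thm:LCD-controls-sbp}}(n^{3/2}\eta+\exp(-C_{\ref{thm:LCD-controls-sbp}}^{-1}n^{1/50}))$; all of this belongs to \cref{prop:eliminate-large-LCD}, not \cref{prop:eliminate-small-LCD}. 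For vectors in $\Gamma^{2}(\eta)$ the hypothesis of \cref{thm:LCD-controls-sbp} at scale $\delta=\sqrt{n}\,\eta$ fails by definition, so the anti-concentration route yields nothing. The proposed ``LCD-transfer'' to the normal $\boldsymbol{b}_i$ cannot rescue this: there is no mechanism by which a small-LCD witness $\boldsymbol{a}$ forces the column-span normal to have large LCD, and your stated degradation argument starts from the inequality $\LCD(\boldsymbol{a})\geq n^{0.7}/\eta$, which is precisely what is \emph{not} available here.

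The paper's proof of \cref{prop:eliminate-small-LCD} is structurally unrelated to yours. It exploits the smallness of the LCD directly: if $\boldsymbol{a}\in\Gamma^{2}(\eta)$ then some dilation $\theta\boldsymbol{a}$ with $|\theta|\leq n^{0.7}/\eta$ is close to a nonzero Gaussian integer vector $\boldsymbol{w}$, and \cref{prop:reduction-to-integer-heavy-tailed} shows (via a three-scale decomposition of the rounding error and the $\infty\to 2$ norm bounds of \cref{prop:operator-norm-control}) that $\|P_K M_n\boldsymbol{w}\|_2$ is small for some large $K$. One then runs a union bound over the discrete set of such $\boldsymbol{w}$: sparse $\boldsymbol{w}$ are handled by \cref{lemma:invertibility-single-vector}, and non-sparse $\boldsymbol{w}$ by slicing according to $\rho_{1,z}(\boldsymbol{w})$ and invoking the counting theorem \cref{thm:counting-continuous}. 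None of this involves column normals, distance-to-hyperplane reductions, or \cref{thm:LCD-controls-sbp}.
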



The proof of \cref{prop:eliminate-large-LCD} is relatively simple, and follows from a conditioning argument developed in \cite{litvak2005smallest}, once we observe the crucial fact (\cref{thm:LCD-controls-sbp}) that for any $\boldsymbol{a}\in \Gamma^{1}(\eta)$, 
$$\rho_{\delta ,z}(\boldsymbol{a})  \lesssim \gamma^{-1}\sqrt{n}\delta + \exp(-\Omega(n^{1/50}))$$ for all $\delta \geq \eta$. We defer the (by now standard) details to \cref{sec:eliminate-large-LCD}.

The remainder of this section is devoted to the proof of \cref{prop:eliminate-small-LCD}.

\subsection{Reduction to Gaussian integer vectors}
Let $\mathcal{K}:= \{K\subseteq[n]: |K|\geq n-6n^{0.99}\}$ and
$$\boldsymbol{V} = \{\boldsymbol{v} \in (\Z+i\Z)^{n} \mid \|\Re(\boldsymbol{v})\|_{\infty}, \|\Im(\boldsymbol{v})\|_{\infty} \leq 2\eta^{-1}n^{3/4}\}.$$
As a first and crucial step towards the proof of \cref{prop:eliminate-small-LCD}, we will prove the following:
\begin{proposition}
\label{prop:reduction-to-integer-heavy-tailed}
With notation as above,
\begin{align*}
\Pr\left(\exists \boldsymbol{a} \in \Gamma^{2}(\eta):\|M_{n}\boldsymbol{a}\|_{2}\leq \eta \right) 
&\leq C_{\ref{prop:reduction-to-integer-heavy-tailed}}e^{-n^{0.99}/10} + \\ \Pr(\exists \boldsymbol{w} \in \boldsymbol{V}\text{ and } K\in \mathcal{K} &: \|P_{K}M_{n}\boldsymbol{w}\|_{2}\leq C_{\ref{prop:reduction-to-integer-heavy-tailed}}\min\{n^{0.21}\|\boldsymbol{w}\|_{2}, n^{0.711}\}),
\end{align*}
where $C_{\ref{prop:reduction-to-integer-heavy-tailed}}\geq 1$ is an absolute constant. 
\end{proposition}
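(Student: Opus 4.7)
The plan is to use the small-LCD property of vectors $\boldsymbol{a}\in\Gamma^{2}(\eta)$ to round each $\boldsymbol{a}$ (up to rescaling) to a nonzero Gaussian integer vector $\boldsymbol{w}$, and then to bound $\|P_{K}M_{n}\boldsymbol{w}\|_{2}$ by combining the hypothesis $\|M_{n}\boldsymbol{a}\|_{2}\le\eta$ with the $\infty\to 2$ norm control on $N_n$ from \cref{prop:operator-norm-control}. The failure of that proposition (applied with $\epsilon=0.01$) is what produces the additive $C_{\ref{prop:reduction-to-integer-heavy-tailed}}e^{-n^{0.99}/10}$ term in the statement; all further arguments are carried out on the high-probability event that its conclusions hold uniformly in $J$.

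First I would set up the rounding. Given $\boldsymbol{a}\in\Gamma^2(\eta)$ with $\|M_n\boldsymbol{a}\|_2\le\eta$, the hypothesis $\LCD_{\alpha,\gamma}(\boldsymbol{a})<n^{0.7}\eta^{-1}$ provides $\theta\in\C$ with $0<|\theta|<n^{0.7}\eta^{-1}$. Taking $\boldsymbol{w}$ to be the coordinate-wise nearest Gaussian integer to $\theta\boldsymbol{a}$ (the minimizer in the LCD definition) and setting $\boldsymbol{e}:=\theta\boldsymbol{a}-\boldsymbol{w}$, one obtains $\|\boldsymbol{e}\|_\infty\le 1/\sqrt 2$, $\|\boldsymbol{e}\|_2<\min\{\gamma|\theta|,\alpha\}$, and $\boldsymbol{w}\ne 0$. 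Since $\|\boldsymbol{w}\|_\infty\le|\theta|+1\le 2n^{3/4}\eta^{-1}$, we have $\boldsymbol{w}\in\boldsymbol{V}$; also $\|\boldsymbol{w}\|_2\ge(1-\gamma)|\theta|$. Writing $\boldsymbol{w}=\theta\boldsymbol{a}-\boldsymbol{e}$ and using $\|M\|\le n^{0.51}$ then gives, for any $K\subseteq[n]$,
\[
\|P_K M_n \boldsymbol{w}\|_2 \le |\theta|\eta + n^{0.51}\|\boldsymbol{e}\|_2 + \|P_K N_n \boldsymbol{e}\|_2,
\]
where the first term is at most $n^{0.7}$ and the second is at most $n^{0.51}\min\{\gamma|\theta|,\alpha\}$.

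The technical heart of the argument is to show $\|P_K N_n \boldsymbol{e}\|_2\le C n^{2/3+\epsilon}\|\boldsymbol{e}\|_2$ for a suitable $K\in\mathcal{K}$. I would partition $[n]$ into a \emph{constant} number $r=2$ of magnitude buckets $T_i:=\{j:t_{i+1}<|e_j|\le t_i\}$ with $t_i:=t_1\cdot n^{-(i-1)c}$ and $c:=1/(2(r+1))=1/6$, plus a tail $\{j:|e_j|\le t_{r+1}\}$. For each $T_i$, the second-moment inequality yields $|T_i|\le\|\boldsymbol{e}\|_2^2/t_{i+1}^2$, and \cref{prop:operator-norm-control} part 2 produces a set $I(T_i)$ with $|I(T_i)|\ge n-2n^{1-\epsilon}$ and
\[
\|P_{I(T_i)} N_n P_{T_i}\|_{\infty\to 2} \le C\, n^{0.5+\epsilon}\,\|\boldsymbol{e}\|_2/t_{i+1},
\]
so the per-bucket contribution to $\|P_K N_n \boldsymbol{e}\|_2$ is at most $Cn^{0.5+\epsilon+c}\|\boldsymbol{e}\|_2$; part 1 of the same proposition bounds the tail contribution by $Cn^{1+\epsilon-rc}\|\boldsymbol{e}\|_2$. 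Setting $K:=I\cap I(T_1)\cap I(T_2)$ and taking $\epsilon=0.01$ gives $|K^c|\le 6n^{0.99}$, so $K\in\mathcal{K}$; summing the contributions and using the balance $(r+1)c=1/2$ yields the desired $\|P_K N_n \boldsymbol{e}\|_2\le Cn^{2/3+\epsilon}\|\boldsymbol{e}\|_2$. Substituting $\|\boldsymbol{e}\|_2\le\alpha$ gives the absolute bound $\|P_K M_n \boldsymbol{w}\|_2\le Cn^{0.711}$; substituting $\|\boldsymbol{e}\|_2\le\gamma|\theta|=|\theta|/\sqrt n$ makes every error term scale linearly with $|\theta|\asymp\|\boldsymbol{w}\|_2$ and produces $\|P_K M_n \boldsymbol{w}\|_2\le Cn^{0.21}\|\boldsymbol{w}\|_2$. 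Combining the two estimates yields $\le C\min\{n^{0.21}\|\boldsymbol{w}\|_2,\,n^{0.711}\}$ as required.

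I expect the main obstacle to be the tight budget $|K^c|\le 6n^{0.99}$ built into the definition of $\mathcal{K}$. A fully dyadic (factor-of-two) partition of $\boldsymbol{e}$ would introduce $\Theta(\log n)$ buckets, each consuming $\Omega(n^{0.99})$ coordinates from $K^c$, which would break the budget. The resolution is the coarse $n^{-c}$-width partition with only $r=O(1)$ buckets: the exponent saving $-\delta_i/2$ in \cref{prop:operator-norm-control} part 2 exactly offsets the quadratic blow-up $\|\boldsymbol{e}\|_2^2/t_{i+1}^2$ of $|T_i|$ coming from the second-moment inequality, so the per-bucket contribution does not depend on the bucket index and a constant-depth partition is sufficient.
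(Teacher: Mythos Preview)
Your approach is correct and essentially the same as the paper's: both round $\boldsymbol{a}$ via the LCD to a Gaussian integer vector $\boldsymbol{w}$, split the error $\boldsymbol{e}=\theta\boldsymbol{a}-\boldsymbol{w}$ into three magnitude layers, apply \cref{prop:operator-norm-control} to each layer, and take $K$ as the intersection of the three resulting good index sets. The only cosmetic differences are that the paper uses a top-$k$ decomposition (the largest $n^{0.4}$ coordinates, the next $n^{0.8}-n^{0.4}$, and the remainder, with $\delta_1=0.2$, $\delta_2=0.6$) rather than your threshold buckets, and handles $M$ and $N_n$ together via $\|MP_J\|_{\infty\to 2}\le n^{0.51}\sqrt{|J|}$ rather than separating them; note that your arithmetic implicitly assumes $t_1=\|\boldsymbol{e}\|_2$ (so that $|T_i|\le n^{2ic}$ is a \emph{fixed} upper bound) and that each $T_i$ is padded to size exactly $n^{2ic}$ before invoking part 2 of \cref{prop:operator-norm-control}, a step the paper's top-$k$ scheme makes automatic.
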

\begin{remark}
As we will see shortly, the crucial point in the above proposition is that $n^{0.21} \ll n^{1/2 - \epsilon}$ and $n^{0.711} \ll n^{0.75 - \epsilon}$.
\end{remark}
\begin{proof}
Let $\epsilon = 0.01$, $\delta_{1} = 0.2$, and $\delta_{2} = 0.6$. Let $\mathcal{G}$ denote the event appearing in the conclusion of \cref{prop:operator-norm-control} for $(\epsilon,\delta_1)$ and $(\epsilon,\delta_2)$ simultaneously. Since $\Pr(\mathcal{G}^{c}) \leq 2C_{\ref{prop:operator-norm-control}}(0.01)\exp(-n^{0.99}/8)$, we may restrict ourselves to the event $\mathcal{G}$.  

Let $\boldsymbol{a}\in \Gamma^{2}(\eta)$. Then, by definition, there exists some $\theta \in \C$ with $0 < |\theta| \leq \LCD_{\alpha,\gamma}(\boldsymbol{a})\leq n^{3/4}\eta^{-1}$ and some $\boldsymbol{w}\in (\Z+i\Z)^{n}\setminus \{\boldsymbol{0}\}$ such that $\|\theta \boldsymbol{a} - \boldsymbol{w}\|_{2} \leq \min\{\gamma |\theta|,\alpha\}$. Note also that $\|\theta \boldsymbol{a}-\boldsymbol{w}\|_{\infty} \leq \min\{\gamma|\theta|,1\}$. To leverage the control we have over various norms associated to the matrix $M_n$, we decompose the `error' vector $\theta \boldsymbol{a} - \boldsymbol{w}$ into a `small' part (with respect to the $\ell^\infty$-norm), a `sparse and small' part, and a `very sparse' part. 

Accordingly, let $\boldsymbol{v}_{\text{sp}} \in \C^{n}$ denote the vector obtained by keeping the largest (in absolute value) $n^{0.4}$ coordinates of $\theta \boldsymbol{a} - \boldsymbol{w}$, let $\boldsymbol{v}_{\text{ss}}$ denote the vector obtained by keeping the next $n^{0.8}-n^{0.4}$ largest coordinates of $\theta \boldsymbol{a} - \boldsymbol{w}$, and let $\boldsymbol{v}_{\text{sm}} = \theta \boldsymbol{a} - \boldsymbol{w} - \boldsymbol{v}_{\text{sp}} - \boldsymbol{v}_{\text{ss}}$. Then, we have that $$\|\boldsymbol{v}_{\text{sp}}\|_{\infty}\leq \min\{\gamma|\theta|,1\},$$ and
\begin{align}
\label{eqn:bound-on-vsmall}
\|\boldsymbol{v}_{\text{ss}}\|_{\infty} \leq \frac{\min\{\gamma |\theta|,\alpha\}}{n^{0.2}} , \|\boldsymbol{v}_{\text{sm}}\|_{\infty} \leq \frac{\min\{\gamma |\theta|,\alpha\}}{n^{0.4}}.
\end{align}
Indeed, the first inequality is immediate from $\|\theta \boldsymbol{a} - \boldsymbol{w}\|_{\infty} \leq \min\{\gamma|\theta|,1\}$, whereas the second inequality follows from
$$\max\{n^{0.4}\|\boldsymbol{v}_{\text{ss}}\|_{\infty}^{2}, n^{0.8}\|\boldsymbol{v}_{\text{sm}}\|^{2}_{\infty}\}\leq \|\theta \boldsymbol{a} -\boldsymbol{w}\|^{2}_{2}.$$
Let $J_1\subseteq[n]$ denote the support of $\boldsymbol{v}_{\text{sp}}$ and let $J_{2}\subseteq[n]$ denote the support of $\boldsymbol{v}_{\text{sp}}+ \boldsymbol{v}_{\text{ss}}$. By extending these sets if need be, we may assume that $|J_1| = n^{0.4}$ and $|J_2| = n^{0.8}$. Moreover, since we have restricted to $N_n \in \mathcal{G}$, let $I\subseteq [n]$ denote a subset of size at least $n-2n^{1-\epsilon}$ with respect to which conclusion 1. of \cref{prop:operator-norm-control} holds. 

Note that since $\|M\| \leq n^{0.51}$, we have $\|MP_{J}\|_{\infty \to 2} \leq n^{0.51}\sqrt{|J|}$ for every $J\subseteq[n]$. Therefore,
$$\|P_{I}M_{n}\|_{\infty \to 2} \leq \|P_{I}N_{n}\|_{\infty \to 2} + \|P_{I}M_{n}\|_{\infty \to 2} \lesssim n^{1.01}$$
and similarly, 
\begin{align*}
\|P_{I(J_1)}M_{n}P_{J_1}\|_{\infty \to 2} 
& \lesssim n^{0.71},\\
\|P_{I(J_2)}M_{n}P_{J_2}\|_{\infty \to 2} 
& \lesssim n^{0.91}.
\end{align*}
Then, from the triangle inequality,  
we have
\begin{align*}
\|P_{I(J_1)}P_{I(J_2)}P_{I}M_{n}(\theta\boldsymbol{a}-\boldsymbol{w})\|_{2} & \leq\|P_{I}M_{n}\boldsymbol{v}_{\text{sm}}\|_{2}+\|P_{I(J_2)}M_{n}\boldsymbol{v}_{\text{ss}}\|_{2} + \|P_{I(J_1)}M_{n}\boldsymbol{v}_{\text{sp}}\|_{2}\\
&= \|P_{I}M_{n}\boldsymbol{v}_{\text{sm}}\|_{2}+\|P_{I(J_2)}M_{n}P_{J_2}\boldsymbol{v}_{\text{ss}}\|_{2} + \|P_{I(J_1)}M_{n}P_{J_1}\boldsymbol{v}_{\text{sp}}\|_{2}\\
 \leq\|P_{I}M_{n}\|_{\infty\to2}\|\boldsymbol{v}_{\text{sm}}\|_{\infty}&+\|P_{I(J_2)}M_{n}P_{J_2}\|_{\infty\to2}\|\boldsymbol{v}_{\text{ss}}\|_{\infty} + \|P_{I(J_1)}M_{n}P_{J_1}\|_{\infty\to2}\|\boldsymbol{v}_{\text{sp}}\|_{\infty}\\
 & \lesssim \left( n^{0.61}{\min\{\gamma|\theta|,\alpha\}}+ n^{0.71}{\min\{\gamma|\theta|,\alpha\}} +  n^{0.71}\min\{\gamma |\theta|, 1\}\right)\\
 & \lesssim \left(\min\{n^{0.21}|\theta|, n^{0.711 }\} + \min\{n^{0.21}|\theta|,n^{0.71}\}\right)\\
 &\lesssim \min\{n^{0.21}|\theta|, n^{0.711}\},
\end{align*}
where the second line uses that $P_{J_2}\boldsymbol{v}_{\text{ss}} = \boldsymbol{v}_{\text{ss}}$ and $P_{J_1}\boldsymbol{v}_{\text{sp}} = \boldsymbol{v}_{\text{sp}}$ ; the fourth line uses the above estimates on the $\infty$-to-$2$ norms and \cref{eqn:bound-on-vsmall}, and the fifth line uses the parameter value $\gamma = n^{-1/2}$. 

Thus, if $\|M_n\boldsymbol{a}\|_{2} \leq \eta$, it follows from the triangle inequality that 
\begin{align*}
\|P_{I(J_1)}P_{I(J_2)}P_IM_{n}\boldsymbol{w}\|_{2} & =\|P_{I(J_1)}P_{I(J_2)}P_IM_{n}(\boldsymbol{w}-\theta \boldsymbol{a})+P_{I(J_1)}P_{I(J_2)}P_IM_{n}(\theta \boldsymbol{a})\|_{2}\\
 & \leq\|P_{I(J_1)}P_{I(J_2)}P_IM_{n}(\theta \boldsymbol{a}-\boldsymbol{w})\|_{2}+|\theta|\cdot \|M_{n}\boldsymbol{a}\|_{2}\\
 & \lesssim \min\{n^{0.21}|\theta|, n^{0.711}\}+|\theta|\eta\\
 &\lesssim \min\{n^{0.21}|\theta|, n^{0.711}\}\\
 &\lesssim \min\{n^{0.21}\|\boldsymbol{w}\|_{2},n^{0.711}\},
\end{align*}
where the fourth line follows since $\eta \ll n^{0.21}$ and $|\theta| \eta \leq n^{0.7} \ll n^{0.711}$, and the last line follows since $\|\boldsymbol{w}\|_{2} \geq |\theta|(1-\gamma) \geq |\theta|/2$. Since $|I(J_1)^{c} \cup I(J_2)^{c} \cup I^{c}| \leq |I(J_1)^{c}| + |I(J_2)^{c}| + |I^{c}| \leq 6n^{0.99}$, we get the desired conclusion. 
\end{proof}

In view of \cref{prop:reduction-to-integer-heavy-tailed}, it suffices to show the following in order to prove \cref{prop:eliminate-small-LCD}, and hence, complete the proof of \cref{thm:main}.
\begin{proposition}
\label{prop:integers-heavy-tailed}
$\Pr(\exists \boldsymbol{w} \in \boldsymbol{V}\text{ and } K\in \mathcal{K} : \|P_{K}M_{n}\boldsymbol{w}\|_{2}\leq  C_{\ref{prop:reduction-to-integer-heavy-tailed}}\min\{n^{0.21}\|\boldsymbol{w}\|_{2}, n^{0.711}\}) \leq C_{\ref{prop:integers-heavy-tailed}} e^{-c_{\ref{prop:integers-heavy-tailed}}n},$
where $C_{\ref{prop:integers-heavy-tailed}}\geq 1$ and $c_{\ref{prop:integers-heavy-tailed}} > 0$ are constants depending only on $z$.
\end{proposition}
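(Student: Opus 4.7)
The plan is to apply a union bound first over $K \in \mathcal{K}$ (contributing $|\mathcal{K}| \leq \binom{n}{6n^{0.99}} = e^{O(n^{0.99}\log n)}$) and then, for each fixed $K$, over the nonzero $\boldsymbol{w} \in \boldsymbol{V}$, splitting into sparse vectors ($|\supp(\boldsymbol{w})| < n^{0.99}$) and dense vectors ($|\supp(\boldsymbol{w})| \geq n^{0.99}$). Throughout, write $\tau(\boldsymbol{w}) := C_{\ref{prop:reduction-to-integer-heavy-tailed}}\min\{n^{0.21}\|\boldsymbol{w}\|_2, n^{0.711}\}$; since $|K| \geq n/2$, a direct calculation gives $r(\boldsymbol{w}) := \tau(\boldsymbol{w})/\sqrt{|K|} \leq Cn^{0.211}$ uniformly in $\boldsymbol{w}$.

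For the sparse case, there are at most $\binom{n}{n^{0.99}}(8\eta^{-1}n^{3/4})^{2n^{0.99}} = e^{o(n)}$ such vectors in $\boldsymbol{V}$, using $\eta^{-1} \leq 2^{n^{0.0001}}$. For each, normalize to $\boldsymbol{v} := \boldsymbol{w}/\|\boldsymbol{w}\|_2 \in \S^{2n-1}$; the bad event gives $\|P_K M_n \boldsymbol{v}\|_2 \leq Cn^{0.21} \ll c_{\ref{lemma:invertibility-single-vector}}\sqrt{|K|}$, and a $P_K$-analogue of \cref{lemma:invertibility-single-vector} (obtained by rerunning its argument on only the $|K|$ rows indexed by $K$) yields a probability bound of $(1-c_{\ref{lemma:invertibility-single-vector}})^{|K|} = e^{-\Omega(n)}$. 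The total sparse contribution is therefore at most $e^{-\Omega(n)}$.

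For dense $\boldsymbol{w}$, \cref{claim:anticonc-W} gives $\rho_{1,z}(\boldsymbol{w}) \leq C_{\ref{claim:anticonc-W}}n^{-0.495}$, and tensorization combined with the standard ball-covering estimate $\rho_{r,z}(\boldsymbol{w}) \leq O(r^2+1)\rho_{1,z}(\boldsymbol{w})$ then yields per-vector probability $(C'n^{0.422}\rho_{1,z}(\boldsymbol{w}))^{|K|}$. Since $|\boldsymbol{V}| \leq e^{O(n\log n)}$ is far too large for a direct union bound, I would dyadically decompose dense $\boldsymbol{w}$ into levels $\rho_j := 2^{-j} C_{\ref{claim:anticonc-W}}n^{-0.495}$ and bound $|\boldsymbol{V}_{\rho_j} \cap \boldsymbol{V}|$ via \cref{thm:counting-continuous} with $s := n^{0.99}$, $k$ a suitably chosen integer in $[1000 C_z, \sqrt{s}]$, and a prime $p \in [M, 2M]$ where $M := 2\eta^{-1}n^{3/4}$, so that lifts per mod-$p$ class are $O(1)^{2n}$. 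The constraint $p \leq 2^{n/s} = 2^{n^{0.01}}$ holds since $\log_2 M \leq n^{0.0001} + O(\log n) \leq n^{0.01}$; then the first term $(5np^2/s)^s$ of \cref{thm:counting-continuous} is $e^{o(n\log n)}$, and the crucial algebraic cancellation is that the second term $(C\rho_j^{-1}/\sqrt{s/k})^n$ multiplied by the per-vector bound $(C'n^{0.422}\rho_j)^{|K|}$ and the lift count collapses (the $\rho_j$ dependencies cancel to leading order) to $e^{O(n^{0.99}\log n)}\cdot(C''n^{0.422-0.495})^n = e^{-\Omega(n\log n)}$, independent of $\rho_j$. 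Summing over the $O(\log n)$ dyadic levels inside the validity range of \cref{thm:counting-continuous}, and using the trivial count $|\boldsymbol{V}|$ for the remaining very small $\rho_j$ (where the per-vector factor $(C'n^{0.422}\rho_j)^{|K|}$ alone dominates), gives a total dense contribution of $e^{-\Omega(n\log n)}$. The main obstacle I anticipate is orchestrating the three-way compatibility of $p \geq M$ (bounded lifts), $p \leq 2^{n/s}$ (for \cref{thm:counting-continuous} to apply), and $\sqrt{s/k} \geq n^{0.422}$ (to make the telescoped bound have negative exponent); the specific exponents $3/4$ in $M$ and $0.211$ in $r$ — both outputs of the rounding analysis in \cref{prop:reduction-to-integer-heavy-tailed} — were engineered precisely so that $s = n^{0.99}$ satisfies all three constraints with room to spare.
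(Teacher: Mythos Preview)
Your approach is exactly the paper's: split into sparse ($|\supp(\boldsymbol{w})| < n^{0.99}$) and dense Gaussian integer vectors, dispose of the sparse ones via \cref{lemma:invertibility-single-vector} and a crude count (this is the paper's \cref{lemma:sparse-vectors-heavy-tails}), and for the dense ones stratify by $\rho_{1,z}(\boldsymbol{w})$ and invoke \cref{thm:counting-continuous} level by level (the paper's \cref{prop:non-sparse-vectors-heavy-tails}). The dense-case arithmetic you sketch --- the cancellation of $\rho_j^{-n}$ against $\rho_j^{|K|}$ leaving a residual $\rho_j^{-O(n^{0.99})}$ that is absorbed into $e^{o(n)}$ --- is precisely the computation in the paper's proof.

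There is, however, a genuine gap in your parameter choice for the prime. You list three constraints (injectivity $p \geq M$, the upper bound $p \leq 2^{n/s}$, and $\sqrt{s/k}$ large), but you omit the fourth hypothesis of \cref{thm:counting-continuous}: $p \geq C_{\ref{thm:counting-continuous}}\rho^{-1}$. With your choice $p \in [M, 2M]$, the theorem is only available for $\rho_j \gtrsim 1/M$. But the pigeonhole lower bound on $\rho_{1,z}(\boldsymbol{w})$ for $\boldsymbol{w} \in \boldsymbol{V}$ is only of order $1/(nM^2)$ (variance $\lesssim nM^2$, then cover a disc of radius $O(\sqrt{n}M)$ by unit balls), so there is a nonempty window $\rho_j \in [\Omega(1/(nM^2)),\, C/M)$ that your $p$ cannot reach. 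Your fallback --- the trivial count $|\boldsymbol{V}| \leq (O(M))^{2n}$ --- does not close this: at the top of that window, $\rho_j \sim 1/M$, the per-vector factor is only $(C'n^{0.422}/M)^{|K|}$, and the product is at least $M^{2n-|K|}(C'n^{0.422})^{|K|} \geq (C'Mn^{0.422})^n$, which is enormous. The per-vector factor only overwhelms $|\boldsymbol{V}|$ once $\rho_j \lesssim 1/(n^{0.422}M^2)$, which lies strictly below the window.

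The remedy is simply to take $p$ large enough that the window disappears, i.e.\ $p \gtrsim nM^2$; the paper in fact takes $p = 2^{n^{0.001}}$. Injectivity of $\varphi_p$ on $\boldsymbol{V}$ is preserved, and $p \leq 2^{n/s}$ still holds with your $s = n^{0.99}$ since $\log_2(nM^2) = O(n^{0.0001}) \ll n^{0.01}$. The paper packages this step as a standalone pigeonhole claim (\cref{claim:lower-bound-anticonc-W}) establishing $\rho_{1,z}(\boldsymbol{w}) \gtrsim n^{-1/2}\eta^4 \gg 1/p$, which eliminates the ``very small $\rho_j$'' case altogether; with that correction your sketch matches the paper's proof (the paper's specific choices $s = n^{0.997}$, $k = n^{0.097}$ versus your $s = n^{0.99}$ are immaterial).
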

The proof of this proposition is the content of the next two subsections. 
\subsection{Dealing with sparse Gaussian integer vectors}
Throughout this subsection and the next one, $p = 2^{n^{0.001}}$ is a prime. Note, in particular, that $p \gg \eta^{-1}n^{3/4}$. 
The proof of \cref{prop:integers-heavy-tailed} proceeds in two steps. The first step is to show that the probability of the event appearing in \cref{prop:integers-heavy-tailed} is small, provided we restrict ourselves only to sufficiently sparse Gaussian integer vectors. 
Let
$$\boldsymbol{S}:= \{\boldsymbol{w}\in (\Z+i\Z)^{n}\setminus \{\boldsymbol{0}\} \mid \|\Re(\boldsymbol{w})\|_{\infty}, \|\Im(\boldsymbol{w})\|_{\infty} \leq p,  |\supp(\boldsymbol{w})| \leq n^{0.99}\}.$$
\begin{lemma}
\label{lemma:sparse-vectors-heavy-tails}
$\Pr\left(\exists \boldsymbol{w} \in \boldsymbol{S}\text{ and }K\in \mathcal{K}: \|P_K M_{n}\boldsymbol{w}\|_{2}  \leq C_{\ref{prop:reduction-to-integer-heavy-tailed}}n^{0.21}\|\boldsymbol{w}\|_{2}\right) \leq C_{\ref{lemma:sparse-vectors-heavy-tails}}  \exp(-c_{\ref{lemma:invertibility-single-vector}}n/4),$ where $C_{\ref{lemma:sparse-vectors-heavy-tails}} \geq 1$ is an absolute constant.
\end{lemma}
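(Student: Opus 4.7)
The plan is a direct union bound over $\boldsymbol{S}\times\mathcal{K}$. The crucial point is that $\boldsymbol{S}$ is already a finite discrete set: its members have Gaussian-integer coordinates of height at most $p = 2^{n^{0.001}}$ and support size at most $n^{0.99}$. In particular $\log|\boldsymbol{S}|$ will turn out to be only of order $n^{0.991}$, which is negligible compared to the $\Omega(n)$ single-vector exponent supplied by \cref{lemma:invertibility-single-vector}. Thus no $\epsilon$-net and, crucially, no control over $\|M_n\|$ will be needed; we bypass entirely the usual obstacle posed by heavy tails.

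First I would establish a rectangular version of \cref{lemma:invertibility-single-vector}. Fix $\boldsymbol{v}\in\S^{2n-1}$ and $K\subseteq[n]$. For each $i\in K$, $(M_n\boldsymbol{v})_i$ is a deterministic shift (depending on the $i$-th row of $M$) of the weighted sum $\sum_j N_{ij}v_j$, and these events are independent across $i$ because the rows of $N_n$ are. Applying \cref{lemma:anticoncentration} row by row and then tensorizing, exactly as in the proof of \cref{lemma:invertibility-single-vector} but summing over only the $|K|$ rows indexed by $K$, I would obtain
\[
\Pr\!\left(\|P_K M_n \boldsymbol{v}\|_2 \leq c_{\ref{lemma:invertibility-single-vector}}\sqrt{|K|}\right) \leq (1 - c_{\ref{lemma:invertibility-single-vector}})^{|K|}.
\]
Every $K\in\mathcal{K}$ satisfies $|K|\geq n - 6n^{0.99}\geq n/2$, so for $n$ sufficiently large $c_{\ref{lemma:invertibility-single-vector}}\sqrt{|K|}\geq C_{\ref{prop:reduction-to-integer-heavy-tailed}} n^{0.21}$. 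Taking $\boldsymbol{v}=\boldsymbol{w}/\|\boldsymbol{w}\|_2$ and rescaling, this gives for each fixed $\boldsymbol{w}\in\boldsymbol{S}$ and $K\in\mathcal{K}$,
\[
\Pr\!\left(\|P_K M_n \boldsymbol{w}\|_2 \leq C_{\ref{prop:reduction-to-integer-heavy-tailed}} n^{0.21}\|\boldsymbol{w}\|_2\right) \leq (1-c_{\ref{lemma:invertibility-single-vector}})^{n/2}.
\]

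To conclude, I would bound $|\boldsymbol{S}|\cdot|\mathcal{K}|$ by an elementary count. The number of supports of size at most $n^{0.99}$ is at most $\binom{n}{\leq n^{0.99}}\leq \exp(O(n^{0.99}\log n))$, and for each fixed support the number of assignments of Gaussian-integer values with real and imaginary parts in $[-p,p]$ is at most $(2p+1)^{2n^{0.99}}\leq \exp(O(n^{0.991}))$, using $\log p \lesssim n^{0.001}$. Similarly $|\mathcal{K}|\leq \exp(O(n^{0.99}\log n))$. A union bound over $\boldsymbol{S}\times\mathcal{K}$ then yields a total failure probability of at most
\[
\exp\!\left(O(n^{0.991}) - c_{\ref{lemma:invertibility-single-vector}} n/2\right) \leq \exp\!\left(-c_{\ref{lemma:invertibility-single-vector}} n/4\right)
\]
for $n$ sufficiently large, which is the claimed bound. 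I do not anticipate any serious obstacle: the argument succeeds precisely because the sub-exponential height $p$ and sub-linear support size $n^{0.99}$ force $\log|\boldsymbol{S}\times\mathcal{K}|$ to be $o(n)$. Non-sparse vectors, where this counting would fail badly, will need to be handled by a different (more delicate) argument in the next subsection.
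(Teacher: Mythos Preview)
Your proposal is correct and follows essentially the same argument as the paper: a union bound over $\boldsymbol{S}\times\mathcal{K}$, combined with the single-vector invertibility estimate of \cref{lemma:invertibility-single-vector} applied to the (rectangular) matrix $P_K M_n$. The paper organizes the union bound as first over $K$ then over $\boldsymbol{w}$, and is terser about the rectangular application of \cref{lemma:invertibility-single-vector}, but the substance is identical.
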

\begin{proof}
By taking the union bound over all the at most $n\binom{n}{6n^{0.99}} \ll \exp(n^{0.991})$ choices of $K \in \mathcal{K}$, it suffices to show that for a fixed $K_0\in \mathcal{K}$,
$$\Pr\left(\exists \boldsymbol{w} \in \boldsymbol{S}: \|P_{K_0} M_{n}\boldsymbol{w}\|_{2}  \leq C_{\ref{prop:reduction-to-integer-heavy-tailed}}n^{0.21}\|\boldsymbol{w}\|_{2}\right) \leq C  \exp(-c_{\ref{lemma:invertibility-single-vector}}n/2)$$
for some absolute constant $C\geq 1$.
The number of vectors $\boldsymbol{w} \in \boldsymbol{S}$ is at most
$$\binom{n}{n^{0.99}}(3p^{2})^{n^{0.99}} \ll 2^{n^{0.992}}.$$
By \cref{lemma:invertibility-single-vector} applied to the matrix $P_{K_0}M_n$, for any such vector, 
$$\Pr\left(\|P_{K_0} M_{n}\boldsymbol{w}\|_{2} \leq c_{\ref{lemma:invertibility-single-vector}}\sqrt{n}\|\boldsymbol{w}\|_{2}/2\right) \leq  \exp(-c_{\ref{lemma:invertibility-single-vector}}n).$$
Therefore, the union bound gives the desired conclusion. 
\end{proof}

\subsection{Dealing with non-sparse Gaussian integer vectors}
It remains to deal with Gaussian integer vectors with support of size at least $n^{0.99}$. Let
$$\boldsymbol{W}:= \left\{\boldsymbol{w}\in (\Z+i\Z)^{n}\setminus \{\boldsymbol{0}\} \mid \|\Re(\boldsymbol{w})\|_{\infty}, \|\Im(\boldsymbol{w})\|_{\infty} \leq \eta^{-4},  |\supp(\boldsymbol{w})| \geq n^{0.99} \right\}.$$
Note that for our choice of parameters, the natural map
$$\varphi_{p}: \boldsymbol{W} \to (\F_{p}+i\F_p)^{n}$$
is injective.

In view of \cref{lemma:sparse-vectors-heavy-tails}, since $\eta \leq n^{-2}$, and taking the union bound over all the at most $n\binom{n}{6n^{0.99}} \ll \exp(n^{0.991})$ choices of $K\in \mathcal{K}$, the following proposition suffices to prove \cref{prop:integers-heavy-tailed}.

\begin{proposition}
\label{prop:non-sparse-vectors-heavy-tails}
For all $K_0 \in \mathcal{K}$, $$\Pr\left(\exists \boldsymbol{w} \in \boldsymbol{W}: \|P_{K_0} M_{n}\boldsymbol{w}\|_{2} \leq C_{\ref{prop:reduction-to-integer-heavy-tailed}}n^{0.711} \right) \leq C_{\ref{prop:non-sparse-vectors-heavy-tails}} \exp(-c_{\ref{prop:non-sparse-vectors-heavy-tails}}n),$$
where $C_{\ref{prop:non-sparse-vectors-heavy-tails}}\geq 1$ and $c_{\ref{prop:non-sparse-vectors-heavy-tails}} > 0$ are constants depending only on $z$. 
\end{proposition}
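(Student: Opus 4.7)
The plan is to convert the $\ell^2$ smallness condition $\|P_{K_0}M_n\boldsymbol{w}\|_2 \leq C_{\ref{prop:reduction-to-integer-heavy-tailed}}n^{0.711}$ into a pointwise $\ell^\infty$ bound on a large sub-collection of coordinates via Markov's inequality, then exploit the row-independence of $M_n$ to get a bound in terms of the L\'evy concentration function $\rho_{1,z}(\boldsymbol{w})$, and finally invoke \cref{thm:counting-continuous} after a dyadic partition of $\boldsymbol{W}$ by that function.

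First, by Markov applied to $\sum_{i \in K_0}|(M_n\boldsymbol{w})_i|^2 \leq C^2 n^{1.422}$, setting $t := C' n^{0.216}$ (with $C'$ a suitable constant) forces the existence of some $I \subseteq K_0$ with $|I| \geq n - 7n^{0.99}$ such that $|(M_n\boldsymbol{w})_i| \leq t$ for every $i \in I$. Union-bounding over the at most $\binom{n}{n^{0.99}} \leq e^{n^{0.992}}$ choices of $I$, it suffices to fix $I$ and control the probability that some $\boldsymbol{w} \in \boldsymbol{W}$ has $|(M_n\boldsymbol{w})_i| \leq t$ for all $i \in I$. For each fixed $\boldsymbol{w}$, the entries $\{(M_n\boldsymbol{w})_i\}_{i \in I}$ are independent (depending on disjoint rows of $N_n$), so the probability for that $\boldsymbol{w}$ is at most $\rho_{t,z}(\boldsymbol{w})^{|I|}$, and the standard covering bound $\rho_{t,z}(\boldsymbol{w}) \leq O(t^2)\rho_{1,z}(\boldsymbol{w})$ yields $\rho_{t,z}(\boldsymbol{w})^{|I|} \leq (Cn^{0.432}\rho_{1,z}(\boldsymbol{w}))^{|I|}$.

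Next, I dyadically decompose $\boldsymbol{W} = \bigsqcup_\rho \mathcal{W}_\rho$ where $\mathcal{W}_\rho := \{\boldsymbol{w} \in \boldsymbol{W}: \rho_{1,z}(\boldsymbol{w}) \in [\rho, 2\rho)\}$; by \cref{claim:anticonc-W} (applicable because every $\boldsymbol{w} \in \boldsymbol{W}$ has support of size $\geq n^{0.99}$), only $\rho \leq C n^{-0.495}$ contributes. For moderate $\rho \in [\rho_{\min}, Cn^{-0.495}]$ with $\rho_{\min} := 2^{-C_0 n^{0.0001}}$ (where $C_0$ is large enough to handle the trivial regime below), I apply \cref{thm:counting-continuous} with $p = 2^{n^{0.001}}$, $s = n^{0.88}$, and $k \in [1000 C_z, n^{0.005}]$ chosen so that $s/k \gg n^{0.864}$ and $s^{-k/4} \ll \rho_{\min}$. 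Since $\varphi_p$ is injective on $\boldsymbol{W}$ (forced by $\|\Re \boldsymbol{w}\|_\infty, \|\Im \boldsymbol{w}\|_\infty \leq \eta^{-4} \ll p$), the resulting count transfers to $|\mathcal{W}_\rho|$. Multiplied by the per-$\boldsymbol{w}$ probability $(Cn^{0.432}\rho)^{|I|}$, the first term of the counting bound gives $e^{O(n^{0.881})} \cdot (Cn^{-0.063})^{|I|}$, which is $e^{-\Omega(n\log n)}$; the second term $(C\rho^{-1}n^{-0.437})^n$ collapses, by the choice of $s$, to the $\rho$-independent expression $(C^2 n^{-0.005})^n = e^{-\Omega(n\log n)}$.

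The tiny regime $\rho < \rho_{\min}$ is handled by the trivial bound $|\boldsymbol{W}| \leq (2\eta^{-4}+1)^{2n} \leq 2^{O(n^{1.0001})}$, which is dominated by the per-$\boldsymbol{w}$ probability $(Cn^{0.432}\rho_{\min})^n \leq 2^{-\Omega(n^{1.001})}$ once $C_0$ is taken large enough. Summing over the $O(n^{0.001})$ dyadic levels and the $e^{n^{0.992}}$ choices of $I$ gives the desired $e^{-cn}$ bound. The hard part is calibrating the three interrelated exponents --- the target precision $n^{0.711}$ (prescribed by \cref{prop:reduction-to-integer-heavy-tailed}), the Markov threshold $t \sim n^{0.216}$, and the counting parameter $s \sim n^{0.88}$ --- so that both terms of the counting bound simultaneously give exponential improvements after multiplication by the per-vector probability; the margin is slim and depends essentially on the strength $\rho_{1,z}(\boldsymbol{w}) = O(n^{-1/2 + o(1)})$ provided by \cref{claim:anticonc-W}.
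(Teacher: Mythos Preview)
Your proof is correct and follows the same overall strategy as the paper --- dyadically partition $\boldsymbol{W}$ by the value of $\rho_{1,z}$, bound each level set via \cref{thm:counting-continuous}, multiply by the per-vector probability coming from row-independence, and sum. You differ from the paper in two tactical points. To convert the $\ell^{2}$ bound into a product over rows, the paper covers $B(0,n^{0.712})\subset\C^{n}$ by unit polydiscs centered at Gaussian integers and union-bounds over the $(O(n^{0.212}))^{2n}$ centers, whereas you use Markov's inequality to pass to an $\ell^{\infty}$ bound on a large row-subset $I$ and union over $I$; both conversions cost a factor of roughly $(n^{0.21})^{2n}$ and are interchangeable. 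To handle the lower end of the dyadic range, the paper invokes \cref{claim:lower-bound-anticonc-W} (a one-line Markov/pigeonhole argument) to show $\rho_{1,z}(\boldsymbol{w})\geq n^{-1/2}\eta^{4}/10$ for every $\boldsymbol{w}\in\boldsymbol{W}$, so no separate tiny-$\rho$ regime is needed; your trivial-count treatment of $\rho<\rho_{\min}$ also works but is a bit more laborious. The paper takes $s=n^{0.997}$, $k=n^{0.097}$ (so $\sqrt{s/k}=n^{0.45}$), which leaves slightly more room than your $s=n^{0.88}$, but either choice closes the argument. One small imprecision in your write-up: when you say the second counting term ``collapses to the $\rho$-independent expression $(C^{2}n^{-0.005})^{n}$'', there is in fact a residual factor $\rho^{-(n-|I|)}\leq \rho_{\min}^{-7n^{0.99}}=2^{O(n^{0.9901})}$ since $|I|<n$; this is $e^{o(n)}$ and harmless, but worth stating explicitly.
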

The proof of \cref{prop:non-sparse-vectors-heavy-tails} is accomplished by a simple union bound. To execute this, we need the following preliminary claims.
\begin{claim}
\label{claim:lower-bound-anticonc-W}
For all $\boldsymbol{w}\in \boldsymbol{W}$,
$\rho_{1,z}(\boldsymbol{w}) \geq n^{-1/2}\eta^{4}/10$.
\end{claim}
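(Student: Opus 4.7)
The plan is a second-moment plus pigeonhole argument applied to the random variable $X := w_1 z_1 + \cdots + w_n z_n$. Since $\E z_j = 0$, $\E|z_j|^2 = 1$, and the $z_j$ are independent, $X$ has mean $0$ and variance $\|\boldsymbol{w}\|_2^2$. Directly from the coordinate bound $\|\Re(\boldsymbol{w})\|_\infty, \|\Im(\boldsymbol{w})\|_\infty \leq \eta^{-4}$ defining $\boldsymbol{W}$ I get $|w_j|^2 \leq 2\eta^{-8}$ for every $j$, and hence $\|\boldsymbol{w}\|_2^2 \leq 2n\eta^{-8}$.

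Next, I would apply Chebyshev's inequality to the non-negative random variable $|X|^2$ (which has expectation $\|\boldsymbol{w}\|_2^2$) to conclude that $\Pr(|X| \leq 2\|\boldsymbol{w}\|_2) \geq 3/4$. Thus $X$ lives inside the complex disc $B(0, R) \subset \C$ with $R := 2\|\boldsymbol{w}\|_2 \leq 2\sqrt{2n}\,\eta^{-4}$ on an event of probability at least $3/4$. A standard area argument then produces a cover of $B(0, R)$ by $O(R^2) = O(\|\boldsymbol{w}\|_2^2) = O(n\eta^{-8})$ translates of the unit disc $B(0, 1)$, and pigeonhole gives a translate $B(x^{\ast}, 1)$ capturing probability at least $\Omega(1/\|\boldsymbol{w}\|_2^2)$. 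Taking the supremum over $x^{\ast}$ in the definition of $\rho_{1, z}$ yields $\rho_{1, z}(\boldsymbol{w}) \geq \Omega(1/\|\boldsymbol{w}\|_2^2) \gtrsim \eta^8/n$.

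The main step that requires care is turning this natural variance-based bound into the specific $n^{-1/2}\eta^4/10$ stated in the claim. A purely second-moment argument as above produces order $\eta^8/n$, and for distributions like the isotropic complex Gaussian the Lévy concentration function really does scale as $1/\|\boldsymbol{w}\|_2^2$, so the Chebyshev-pigeonhole step is sharp at that order. Bridging the remaining gap would seem to require either a refinement exploiting the $C_z$-goodness of $z$ to identify a one-dimensional ``strip'' in $\C$ where $X$ concentrates (so that the covering step pays $O(R)$ rather than $O(R^2)$ unit discs), or a tightening of the entry bound in the definition of $\boldsymbol{W}$. I would verify the required form of this constant by cross-checking directly against the union bound in \cref{prop:non-sparse-vectors-heavy-tails}, since that is the downstream consumer of the estimate.
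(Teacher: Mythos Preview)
Your approach is exactly the paper's: Chebyshev on $|X|^2$ to place $X$ in a disc of radius $R \lesssim \sqrt{n}\,\eta^{-4}$ with probability $\geq 3/4$, followed by pigeonhole over a cover of that disc by unit balls. You are also right that this yields only $\rho_{1,z}(\boldsymbol{w}) \gtrsim 1/R^{2} \asymp \eta^{8}/n$, not the claimed $n^{-1/2}\eta^{4}/10$. The paper's proof writes the pigeonhole conclusion as if the cover needed $O(R)$ unit balls rather than $O(R^{2})$; this is a one-dimensional count applied to a two-dimensional disc, and it is not valid. Your isotropic complex Gaussian example confirms that the stated bound cannot hold in general: for $z$ standard complex Gaussian and $w_j \equiv \lfloor \eta^{-4}\rfloor$, one has $\rho_{1,z}(\boldsymbol{w}) \asymp 1/\|\boldsymbol{w}\|_2^{2} \asymp \eta^{8}/n \ll n^{-1/2}\eta^{4}$.

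Your instinct to audit the downstream consumer is exactly right, and it resolves the issue. In the proof of \cref{prop:non-sparse-vectors-heavy-tails} the only place the lower endpoint of $t$ enters is through the factor $t^{-6n^{0.99}}$; replacing $t \geq n^{-1/2}\eta^{4}/10$ by the honest $t \gtrsim \eta^{8}/n$ changes that factor to at most $(n\eta^{-8})^{6n^{0.99}} \leq n^{6n^{0.99}}\,2^{48 n^{0.9901}}$ (using $\eta \geq 2^{-n^{0.0001}}$), which is still crushed by the main term $(Cn^{-0.02})^{n}$. So the claim should be stated with the weaker bound $\rho_{1,z}(\boldsymbol{w}) \gtrsim \eta^{8}/n$, your proof of that is complete, and no further refinement (neither $C_z$-goodness nor a strip argument) is needed.
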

\begin{proof}
The random variable $\sum_{j=1}^{n}w_j \xi_j$ has mean $0$ and variance at most $ n\eta^{-8}$. Therefore, by Markov's inequality, 
$$\Pr\left(\left|\sum_{j=1}^{n}w_j \xi_j\right| \leq 2\sqrt{n}\eta^{-4} \right) \geq \frac{3}{4}.$$
Hence, by the pigeonhole principle, it follows that 
$$\rho_{1,\xi}(\boldsymbol{w}) \geq n^{-1/2}\eta^{4}/10,$$
as desired. 
\end{proof}

For the next claim, let
$$\boldsymbol{W}_{t}:= \{\boldsymbol{w}\in \boldsymbol{W}: \rho_{1,\xi}(\boldsymbol{w})\in [t,2t)\}.$$
Note that the previous claim along with \cref{claim:anticonc-W} shows that $\boldsymbol{W}_{t}$ is nonempty only if $n^{-1/2}\eta^{4}/10\leq t\leq C_{\ref{claim:anticonc-W}}n^{-0.495}$.

\begin{claim}
\label{claim:size-of-W_t}
For all $n^{-1/2}\eta^{4}/10\leq t\leq C_{\ref{claim:anticonc-W}}n^{-0.495}$, 
$$|\boldsymbol{W}_{t}| \leq C_{\ref{claim:size-of-W_t}}\left(\frac{C_{\ref{thm:counting-continuous}}t^{-1}}{n^{0.45}}\right)^{n},$$
where $C_{\ref{claim:size-of-W_t}}\geq 1$ is a constant depending only on $C_{\ref{thm:counting-continuous}}, C_{\ref{claim:anticonc-W}}$. 
\end{claim}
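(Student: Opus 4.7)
The plan is to reduce the counting problem to an application of \cref{thm:counting-continuous}. Observe first that since $\boldsymbol{w} \in \boldsymbol{W}$ has $\|\Re(\boldsymbol{w})\|_\infty, \|\Im(\boldsymbol{w})\|_\infty \leq \eta^{-4}$ and $\eta \geq 2^{-n^{0.0001}}$, the coordinates of $\boldsymbol{w}$ have absolute value at most $\sqrt{2}\cdot 2^{4n^{0.0001}}$, which is much less than $p/2 = 2^{n^{0.001}}/2$ for $n$ large. Hence the reduction map $\varphi_p$ is injective on $\boldsymbol{W}$, and in particular $|\boldsymbol{W}_t| = |\varphi_p(\boldsymbol{W}_t)|$. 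Moreover, since every $\boldsymbol{w}\in \boldsymbol{W}_t$ satisfies $\rho_{1,z}(\boldsymbol{w}) \geq t$, we have $\boldsymbol{W}_t \subseteq \boldsymbol{V}_t$ in the notation of \cref{thm:counting-continuous}, so it suffices to bound $|\varphi_p(\boldsymbol{V}_t)|$.

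Next, I would apply \cref{thm:counting-continuous} with the parameter choices
\[
s := n^{0.99}, \qquad k := n^{0.09}, \qquad p := 2^{n^{0.001}}, \qquad \rho := t.
\]
These choices give $\sqrt{s/k} = n^{0.45}$, so that the second term in the conclusion of \cref{thm:counting-continuous} becomes precisely $\left( C_{\ref{thm:counting-continuous}} t^{-1}/n^{0.45}\right)^{n}$, matching the target bound. The hypotheses of \cref{thm:counting-continuous} are easily verified for $n$ large: $1000C_z \leq n^{0.09} \leq n^{0.495} = \sqrt{s}$ and $s \leq n/\log n$ are immediate; the quantities $e^{-s/k} = e^{-n^{0.9}}$ and $s^{-k/4} = \exp(-\Omega(n^{0.09}\log n))$ are both far smaller than $t \geq n^{-1/2}\eta^{4}/10 \geq 2^{-O(n^{0.0001})}/\sqrt{n}$; the inequality $2^{n/s} = 2^{n^{0.01}} \geq p = 2^{n^{0.001}}$ is clear; and $p = 2^{n^{0.001}} \geq C_{\ref{thm:counting-continuous}} t^{-1}$ holds because $t^{-1} \leq 10 n^{1/2}\cdot 2^{4n^{0.0001}}$.

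The final step is to absorb the first term $(5np^2/s)^s$ from \cref{thm:counting-continuous} into the second. Taking logarithms,
\[
\log\left(\frac{5np^{2}}{s}\right)^{s} = s\left(\log(5n^{0.01}) + 2n^{0.001}\log 2\right) = O(n^{0.991}),
\]
whereas, using the upper bound $t \leq C_{\ref{claim:anticonc-W}} n^{-0.495}$ from \cref{claim:anticonc-W},
\[
\log\left(\frac{C_{\ref{thm:counting-continuous}} t^{-1}}{n^{0.45}}\right)^{n} \geq n\left(0.045\log n - O(1)\right),
\]
which is of order $n\log n \gg n^{0.991}$. Hence the first term is negligible compared to the second, and we conclude
\[
|\boldsymbol{W}_t| \leq |\varphi_p(\boldsymbol{V}_t)| \leq 2\left(\frac{C_{\ref{thm:counting-continuous}} t^{-1}}{n^{0.45}}\right)^{n},
\]
as desired, after absorbing the factor of $2$ and any low-order discrepancies into the constant $C_{\ref{claim:size-of-W_t}}$. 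There is no genuine obstacle here; the only delicate point is checking that the chosen parameters $(s,k,p)$ simultaneously satisfy all the numerical constraints of \cref{thm:counting-continuous} for the full range of $t$ and $\eta$ permitted in this section, but the ample slack in our exponents (e.g.\ $n^{0.001}$ versus $n^{0.0001}$) makes this routine.
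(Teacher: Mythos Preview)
Your proof is correct and follows essentially the same approach as the paper: reduce to \cref{thm:counting-continuous} via the injectivity of $\varphi_p$ on $\boldsymbol{W}$, choose $s,k$ so that $\sqrt{s/k}=n^{0.45}$, and then check that the first term $(5np^{2}/s)^{s}$ is negligible compared to the second. The only cosmetic difference is that the paper takes $s=n^{0.997}$ and $k=n^{0.097}$ rather than your $s=n^{0.99}$ and $k=n^{0.09}$, but both choices satisfy all the hypotheses of \cref{thm:counting-continuous} with room to spare.
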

\begin{proof}
Fix $s= n^{0.997}$ and $k= n^{0.097}$. Then, $1\ll k\leq \sqrt{s} \leq s \leq n/\log{n}$, $n^{-1/2}\eta^{4} \gg \max\{e^{-s/k}, s^{-k/4}\}$, and $2^{n/s} \geq p \gg n^{1/2}\eta^{-4}$. Hence, for large enough $n$, the hypotheses of \cref{thm:counting-continuous} are satisfied, so that 
\begin{align*}
    |\boldsymbol{W}_t| 
    &= |\varphi_p(\boldsymbol{W}_t)|\\
    &\leq |\varphi_p(\boldsymbol{V}_t)|\\
    &\leq \left(\frac{5np^{2}}{s}\right)^{s} + \left(\frac{C_{\ref{thm:counting-continuous}}t^{-1}}{n^{0.45}}\right)^{n}\\
    &\leq 2\left(\frac{C_{\ref{thm:counting-continuous}}t^{-1}}{n^{0.45}}\right)^{n},
\end{align*}
where the first line follows from the injectivity of $\varphi_{p}$ on $\boldsymbol{W}$, the third line follows from \cref{thm:counting-continuous}, and the last line follows since $t^{-1} \gg n^{0.49}$. 
\end{proof}

We now have all the ingredients to prove \cref{prop:non-sparse-vectors-heavy-tails}.

\begin{proof}[Proof of \cref{prop:non-sparse-vectors-heavy-tails}]
Let $D_{x}$ denote the unit polydisc in $\C^{n}$ centered at $x$. 
For all $n$ sufficiently large, we have
\begin{align*}
    \Pr\left(\exists \boldsymbol{w}\in \boldsymbol{W}: \|P_{K_0}M_n \boldsymbol{w}\|_{2} \leq C_{\ref{prop:reduction-to-integer-heavy-tailed}}n^{0.711}\right)
\leq& \sum_{t=0.1n^{-1/2}\eta^{4}}^{n^{-0.494}}\Pr\left(\exists \boldsymbol{w}\in \boldsymbol{W}_{t}: \|P_{K_0}M_n\boldsymbol{w}\|_{2} \leq C_{\ref{prop:reduction-to-integer-heavy-tailed}}n^{0.711}\right)\\
\lesssim \sum_{t=0.1n^{-1/2}\eta^{4}}^{n^{-0.494}}&\sum_{x\in B(0,n^{0.712})\cap (\Z + i\Z)^{n}}\Pr\left(\exists \boldsymbol{w}\in \boldsymbol{W}_{t}: P_{K_0}M_{n}\boldsymbol{w} \in D_{x}\right)\\
\leq \sum_{t=0.1n^{-1/2}\eta^{4}}^{n^{-0.494}}&(400n^{0.212})^{2n}\sup_{x\in (\Z+i\Z)^{n}}\Pr\left(\exists \boldsymbol{w}\in \boldsymbol{W}_{t}: P_{K_0}M_n\boldsymbol{w}\in D_{\boldsymbol{x}}\right)\\
\leq \sum_{t=0.1n^{-1/2}\eta^{4}}^{n^{-0.494}}&(16000n^{0.424})^{n}|\boldsymbol{W}_t|(2t)^{|K_0|}\\
\lesssim \sum_{t=0.1n^{-1/2}\eta^{4}}^{n^{-0.494}}&(16000n^{0.424})^{n}\left(\frac{C_{\ref{thm:counting-continuous}}t^{-1}}{n^{0.45}}\right)^{n}\cdot(2t)^{n-6n^{0.99}}\\
\lesssim \sum_{t=0.1n^{-1/2}\eta^{4}}^{n^{-0.494}}&(32000C_{\ref{thm:counting-continuous}}n^{-0.02})^{n}\cdot t^{-6n^{0.99}}\\
\lesssim& n\cdot(32000C_{\ref{thm:counting-continuous}}n^{-0.02})^{n}\cdot {\eta}^{-30n^{0.99}}\\
\lesssim& n\cdot(32000C_{\ref{thm:counting-continuous}}n^{-0.02})^{n}\cdot 2^{30n^{0.991}}\\
\leq& C_{\ref{prop:non-sparse-vectors-heavy-tails}}\exp(-c_{\ref{prop:non-sparse-vectors-heavy-tails}}n) , 
\end{align*}
where the third line follows since the number of points of $(\Z+i\Z)^{n}$ in $B(0,n^{0.712})$ is at most $(400n^{0.212})^{2n}$, the fifth line follows from \cref{claim:size-of-W_t}, and the seventh and eighth lines follow from the assumed bounds on $\eta$. 
\end{proof}

\bibliographystyle{abbrv}
\bibliography{least-singular-value}

\begin{thebibliography}{10}

\bibitem{bai1997circular}
Z.~Bai.
\newblock Circular law.
\newblock {\em The Annals of Probability}, 25(1):494--529, 1997.

\bibitem{bai2010spectral}
Z.~Bai and J.~W. Silverstein.
\newblock {\em Spectral analysis of large dimensional random matrices},
  volume~20.
\newblock Springer, 2010.

\bibitem{basak2018circular}
A.~Basak, N.~Cook, and O.~Zeitouni.
\newblock Circular law for the sum of random permutation matrices.
\newblock {\em Electronic Journal of Probability}, 23, 2018.

\bibitem{bordenave2012around}
C.~Bordenave and D.~Chafa{\"\i}.
\newblock Around the circular law.
\newblock {\em Probability surveys}, 9, 2012.

\bibitem{cook2017circular}
N.~A. Cook.
\newblock The circular law for random regular digraphs.
\newblock {\em arXiv preprint arXiv:1703.05839}, 2017.

\bibitem{esseen1966kolmogorov}
C.~Esseen.
\newblock On the {K}olmogorov-{R}ogozin inequality for the concentration
  function.
\newblock {\em Probability Theory and Related Fields}, 5(3):210--216, 1966.

\bibitem{ferber2018singularity}
A.~Ferber and V.~Jain.
\newblock Singularity of random symmetric matrices--a combinatorial approach to
  improved bounds.
\newblock {\em arXiv:1809.04718}, 2018.

\bibitem{FJLS2018}
A.~Ferber, V.~Jain, K.~Luh, and W.~Samotij.
\newblock On the counting problem in inverse {L}ittlewood--{O}fford theory.
\newblock {\em arXiv:1904.10425}, 2019.

\bibitem{friedland2007bounds}
O.~Friedland and S.~Sodin.
\newblock Bounds on the concentration function in terms of the diophantine
  approximation.
\newblock {\em Comptes Rendus Mathematique}, 345(9):513--518, 2007.

\bibitem{girko1985circular}
V.~L. Girko.
\newblock Circular law.
\newblock {\em Theory of Probability \& Its Applications}, 29(4):694--706,
  1985.

\bibitem{jain2019combinatorial}
V.~Jain.
\newblock Approximate {S}pielman-{T}eng theorems for the least singular value
  of random combinatorial matrices.
\newblock {\em arXiv:1904.10592}, 2019.

\bibitem{jain2019c}
V.~Jain.
\newblock Quantitative invertibility of random matrices: a combinatorial
  perspective.
\newblock {\em arXiv preprint arXiv:1908.11255}, 2019.

\bibitem{litvak2005smallest}
A.~E. Litvak, A.~Pajor, M.~Rudelson, and N.~Tomczak-Jaegermann.
\newblock Smallest singular value of random matrices and geometry of random
  polytopes.
\newblock {\em Advances in Mathematics}, 195(2):491--523, 2005.

\bibitem{luh2018complex}
K.~Luh.
\newblock Complex random matrices have no real eigenvalues.
\newblock {\em Random Matrices: Theory and Applications}, 7(01):1750014, 2018.

\bibitem{milman2009asymptotic}
V.~D. Milman and G.~Schechtman.
\newblock {\em Asymptotic theory of finite dimensional normed spaces:
  Isoperimetric inequalities in riemannian manifolds}, volume 1200.
\newblock Springer, 2009.

\bibitem{nguyen2011optimal}
H.~H. Nguyen and V.~H. Vu.
\newblock Optimal inverse {L}ittlewood--{O}fford theorems.
\newblock {\em Advances in Mathematics}, 226(6):5298--5319, 2011.

\bibitem{nguyen2013small}
H.~H. Nguyen and V.~H. Vu.
\newblock Small ball probability, inverse theorems, and applications.
\newblock In {\em Erd{\H{o}}s Centennial}, pages 409--463. Springer, 2013.

\bibitem{rebrova2018coverings}
E.~Rebrova and K.~Tikhomirov.
\newblock Coverings of random ellipsoids, and invertibility of matrices with
  iid heavy-tailed entries.
\newblock {\em Israel Journal of Mathematics}, 227(2):507--544, 2018.

\bibitem{rebrova2018norms}
E.~Rebrova and R.~Vershynin.
\newblock Norms of random matrices: local and global problems.
\newblock {\em Advances in Mathematics}, 324:40--83, 2018.

\bibitem{rudelson2008littlewood}
M.~Rudelson and R.~Vershynin.
\newblock The {L}ittlewood--{O}fford problem and invertibility of random
  matrices.
\newblock {\em Advances in Mathematics}, 218(2):600--633, 2008.

\bibitem{rudelson2009smallest}
M.~Rudelson and R.~Vershynin.
\newblock Smallest singular value of a random rectangular matrix.
\newblock {\em Communications on Pure and Applied Mathematics: A Journal Issued
  by the Courant Institute of Mathematical Sciences}, 62(12):1707--1739, 2009.

\bibitem{rudelson2010non}
M.~Rudelson and R.~Vershynin.
\newblock Non-asymptotic theory of random matrices: extreme singular values.
\newblock In {\em Proceedings of the International Congress of Mathematicians
  2010 (ICM 2010) (In 4 Volumes) Vol. I: Plenary Lectures and Ceremonies Vols.
  II--IV: Invited Lectures}, pages 1576--1602. World Scientific, 2010.

\bibitem{rudelson2016no}
M.~Rudelson and R.~Vershynin.
\newblock No-gaps delocalization for general random matrices.
\newblock {\em Geometric and Functional Analysis}, 26(6):1716--1776, 2016.

\bibitem{tao2008random}
T.~Tao and V.~Vu.
\newblock Random matrices: the circular law.
\newblock {\em Communications in Contemporary Mathematics}, 10(02):261--307,
  2008.

\bibitem{tao2010sharp}
T.~Tao and V.~Vu.
\newblock A sharp inverse {L}ittlewood-{O}fford theorem.
\newblock {\em Random Structures Algorithms}, 37(4):525--539, 2010.

\bibitem{tao2010smooth}
T.~Tao and V.~Vu.
\newblock Smooth analysis of the condition number and the least singular value.
\newblock {\em Mathematics of computation}, 79(272):2333--2352, 2010.

\bibitem{tao2010random}
T.~Tao, V.~Vu, M.~Krishnapur, et~al.
\newblock Random matrices: Universality of esds and the circular law.
\newblock {\em The Annals of Probability}, 38(5):2023--2065, 2010.

\bibitem{tao2006additive}
T.~Tao and V.~H. Vu.
\newblock {\em Additive combinatorics}, volume 105.
\newblock Cambridge University Press, 2006.

\bibitem{tao2009inverse}
T.~Tao and V.~H. Vu.
\newblock Inverse {L}ittlewood-{O}fford theorems and the condition number of
  random discrete matrices.
\newblock {\em Annals of Mathematics}, pages 595--632, 2009.

\bibitem{vershynin2010introduction}
R.~Vershynin.
\newblock Introduction to the non-asymptotic analysis of random matrices.
\newblock {\em arXiv:1011.3027}, 2010.

\end{thebibliography}

\appendix

\section{Proof of \cref{prop:operator-norm-control}}
\label{sec:proof-operator-norm}
The proof will make use of the subgaussian concentration inequality, which we now recall. 
\begin{definition}
\label{defn:subgaussian}
A random variable $X$ is said to be $C$-subgaussian if, for all $t>0$,
$$\Pr\left(|X|>t\right) \leq 4\exp\left(-\frac{t^2}{C^2}\right).$$
\end{definition}

\begin{lemma}[see, e.g., Corollary 5.17 in \cite{vershynin2010introduction}]
\label{lemma:subgaussian-concentration}
There exists an absolute constant $C_{\ref{lemma:subgaussian-concentration}}>0$ with the following property. Let $X_1,\dots,X_n$ be independent centered $\tilde{C}_{\xi}$-subgaussian random variables. Then,
$$\Pr\left(\sum_{i=1}^{n}|X_i|^{2} \geq C_{\ref{lemma:subgaussian-concentration}}\tilde{C}_{\xi}^{2}n\right) \leq \exp(-2n).$$
\end{lemma}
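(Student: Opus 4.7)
The plan is a straightforward Chernoff argument, exploiting the fact that the square of a $\tilde{C}_\xi$-subgaussian random variable has a well-controlled exponential moment on the scale $\lambda \sim 1/\tilde{C}_\xi^{2}$.

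First I would bound the moment generating function of a single term. For $\lambda \in (0, 1/\tilde{C}_\xi^{2})$, the layer-cake representation gives
$$\E[\exp(\lambda X_i^{2})] = 1 + \int_0^\infty \lambda e^{\lambda u} \Pr(|X_i| > \sqrt{u})\, du,$$
and substituting the subgaussian tail $\Pr(|X_i| > \sqrt{u}) \leq 4 \exp(-u/\tilde{C}_\xi^{2})$ from \cref{defn:subgaussian} makes the integrand proportional to $\exp(-(1/\tilde{C}_\xi^{2} - \lambda) u)$. Choosing $\lambda := 1/(2\tilde{C}_\xi^{2})$ collapses this to an explicit absolute constant, giving
$$\E[\exp(\lambda X_i^{2})] \leq K$$
for some $K$ independent of $\tilde{C}_\xi$ and of $i$ (a direct computation yields $K \leq 5$). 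Note that the centering hypothesis plays no role here, since the tail bound alone controls all the required exponential moments.

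The remainder is a one-line Chernoff bound. By independence, the moment generating function factorizes, so $\E[\exp(\lambda \sum_{i=1}^n X_i^{2})] \leq K^n$, and Markov's inequality gives
$$\Pr\left(\sum_{i=1}^n |X_i|^{2} \geq t\right) \leq K^n \exp(-\lambda t) = \exp\bigl(n \log K - t/(2\tilde{C}_\xi^{2})\bigr).$$
Setting $t := C_{\ref{lemma:subgaussian-concentration}} \tilde{C}_\xi^{2} n$ with $C_{\ref{lemma:subgaussian-concentration}} := 2(\log K + 2)$ forces the right-hand side to be at most $\exp(-2n)$.

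There is no real obstacle; the only point requiring care is that $\lambda$ must be strictly less than $1/\tilde{C}_\xi^{2}$ for the layer-cake integral to converge, which is why I would work at the safely interior value $\lambda = 1/(2\tilde{C}_\xi^{2})$ rather than at the boundary. An entirely equivalent route is to first observe that $X_i^{2}$ is subexponential with parameter $O(\tilde{C}_\xi^{2})$ and then quote a standard Bernstein inequality; the direct computation above is slightly shorter and self-contained.
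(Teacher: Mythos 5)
Your proof is correct and is essentially the standard derivation of the cited Vershynin result: $|X_i|^2$ is sub-exponential, so one bounds its MGF at scale $\lambda\sim 1/\tilde C_\xi^2$ via the layer-cake formula and the tail bound of \cref{defn:subgaussian}, then closes with Chernoff. The paper itself does not prove this lemma but quotes it, so your self-contained argument is a valid reconstruction. Two very minor notes: (i) you should write $|X_i|^2$ rather than $X_i^2$ in the MGF, since the $X_i$ may be complex — your subsequent use of $\Pr(|X_i|>\sqrt u)$ already treats it correctly, so this is only a notational slip; (ii) you are right that the centering hypothesis is not used, since \cref{defn:subgaussian} is a pure tail condition and the proof needs nothing else.
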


We begin with a simple lemma showing that, with high probability, most rows of a random matrix with i.i.d. centered entries of finite variance have small $\ell_{1}$ and $\ell_{2}$ norms. 
\begin{lemma}
\label{lemma:proj_control_basic}
Let $A:=(a_{ij})$ be an $n\times m$ complex random matrix with i.i.d. entries,
each with mean $0$ and variance $1$. For $\epsilon\in(0,1/2)$, let $I\subseteq[n]$ denote the (random) subset of coordinates
such that for each $i\in I$,
\[
\left(\sum_{j=1}^{m}|a_{ij}|^{2}\leq n^{2\epsilon}m\right)\bigwedge\left(\left|\sum_{j=1}^{m}a_{ij}\right|\leq n^{\epsilon}\sqrt{m}\right).
\]
Then, 
\[
\Pr\left(|I^{c}|\geq2n^{1-\epsilon}\right)\leq2\exp\left(-\frac{n^{1-\epsilon}}{4}\right).
\]
\end{lemma}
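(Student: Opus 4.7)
The plan is to observe that the two defining conditions for membership in $I$ depend only on the $i$-th row of $A$, and that the rows of $A$ are mutually independent. This reduces the problem to estimating the per-row failure probability and then invoking a standard concentration inequality for a sum of independent Bernoulli indicators.

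For a fixed row $i$, both relevant second moments are immediate: $\E\!\left[\sum_{j=1}^m |a_{ij}|^2\right] = m$ since $\E|a_{ij}|^2 = 1$, and $\E\!\left[\left|\sum_{j=1}^m a_{ij}\right|^2\right] = m$ as well, by centering and independence of the entries within the row. Two applications of Markov's inequality then give
\[
\Pr\!\left(\sum_{j=1}^m |a_{ij}|^2 > n^{2\epsilon}m\right) \leq n^{-2\epsilon}, \qquad \Pr\!\left(\left|\sum_{j=1}^m a_{ij}\right| > n^\epsilon\sqrt{m}\right) \leq n^{-2\epsilon},
\]
and a union bound shows that each row is bad with probability at most $p := 2n^{-2\epsilon}$.

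Writing $|I^c| = \sum_{i=1}^n \1_{B_i}$, where $B_i$ is the event that row $i$ is bad, the indicators $\1_{B_i}$ are mutually independent because they are measurable with respect to disjoint collections of entries of $A$. Thus $|I^c|$ is stochastically dominated by a $\mathrm{Bin}(n,p)$ random variable with mean at most $\mu := 2n^{1-2\epsilon}$, and I would conclude by applying the standard multiplicative Chernoff bound $\Pr(X \geq a) \leq (e\mu/a)^a$ valid for $a \geq e\mu$. Substituting $a = 2n^{1-\epsilon}$ produces the quantity $(e n^{-\epsilon})^{2n^{1-\epsilon}}$, which for $n$ above a threshold depending on $\epsilon$ is dominated by $\exp(-n^{1-\epsilon}/4)$. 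The small-$n$ regime is handled trivially: either $2n^{1-\epsilon} > n$, in which case $|I^c| \leq n < 2n^{1-\epsilon}$ deterministically and the left-hand side vanishes, or the right-hand side $2\exp(-n^{1-\epsilon}/4)$ already exceeds $1$ and the claim is vacuous; the leading constant $2$ in the statement absorbs any residual slack at the boundary.

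There is no genuine obstacle here --- the two conditions defining $I$ have been set up so that their expectations are exactly calculable, so Markov suffices at the per-row level, and then independence of the rows allows a single Chernoff estimate to finish. In particular, no moment assumption beyond unit variance is invoked, which is important because the entries of $N_n$ in \cref{prop:operator-norm-control} are only assumed to have finite variance, and the stronger subgaussian machinery of \cref{lemma:subgaussian-concentration} is not applicable at this step.
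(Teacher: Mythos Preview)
Your proof is correct and follows essentially the same approach as the paper: Markov's inequality on each row to bound the per-row failure probability by $O(n^{-2\epsilon})$, followed by a Chernoff bound on the sum of independent row indicators. The only cosmetic difference is that the paper treats the two defining conditions separately (sets $I_1$ and $I_2$), applies Chernoff to each, and then union-bounds, whereas you union-bound at the row level first and apply Chernoff once; these are equivalent rearrangements of the same argument.
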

\begin{proof}
Since for each $i\in[n]$,
\[
\E\left[\left|\sum_{j=1}^{m}a_{ij}\right|^{2}\right]=\E\left[\sum_{j=1}^{m}|a_{ij}|^{2}\right]=m,
\]
it follows from Markov's inequality that 
\[
\Pr\left(\sum_{j=1}^{m}|a_{ij}|^{2}>n^{2\epsilon}m\right)\leq n^{-2\epsilon}
\]
and 
\[
\Pr\left(\left|\sum_{j=1}^{m}a_{ij}\right|>n^{\epsilon}\sqrt{m}\right)\leq n^{-2\epsilon}.
\]
Let $I_{1}\subseteq[n]$ denote the subset of coordinates such that
for each $i\in I_{1}$, 
\[
\sum_{j=1}^{m}|a_{ij}|^{2}\leq n^{2\epsilon}m
\]
and let $I_{2}\subseteq[n]$ denote the subset of coordinates such
that for each $i\in I_{2}$, 
\[
\left|\sum_{j=1}^{m}a_{ij}\right|\leq n^{\epsilon}\sqrt{m}.
\]
Since the rows of the matrix are independent, it follows from the
standard Chernoff bound that for $k\in\{1,2\}$
\[
\Pr\left(|I_{k}^{c}|\geq n^{1-\epsilon}\right)\leq\exp\left(-\frac{n^{1-\epsilon}}{4}\right).
\]
Hence, by the union bound,
\[
|I^{c}|\leq|I_{1}^{c}|+|I_{2}^{c}|\leq2n^{1-\epsilon}, \] 
except with probability at most $2\exp\left(-\frac{n^{1-\epsilon}}{4}\right).$
\end{proof}
The next proposition controls the $\infty \to 2$ operator norm of a random matrix with i.i.d. entries, conditioned on no row having $\ell_1$ or $\ell_2$ norm which is `too large', and essentially appears as Proposition 3.10 in \cite{rebrova2018coverings}. Since our statement uses somewhat different parameters than in \cite{rebrova2018coverings}, we provide a complete proof below for the reader's convenience.   
\begin{proposition}
\label{prop:control-infty-to-2-norm}
Fix $\epsilon \in (0,1/2)$. Let $B:=(b_{ij})$ be a fixed $n\times m$ complex matrix, with $0.9n \leq m \leq 1.1n$, such that the $\ell_2$
norm of every row is at most $n^{\epsilon}\sqrt{m}$ and such that for all $i\in[n]$,
\[
\left|\sum_{j=1}^{m}b_{ij}\right|\leq n^{\epsilon}\sqrt{m}.
\]
Let $\pi_{1},\dots,\pi_{n}$ be independent random permutations uniformly
distributed on the symmetric group $S_{m}$, and let $\tilde{B}:=(\tilde{b}_{ij})$ denote
the random $n\times m$ complex matrix whose entries are given by 
\[
\tilde{b}_{ij}:=b_{i,\pi_{i}(j)}.
\]
Then, 
\[
\Pr\left(\|\tilde{B}\|_{\infty\to2}\geq C_{\ref{prop:control-infty-to-2-norm}} \sqrt{mn} n^{\epsilon}\right)\leq \exp(-2n),
\]
where $C_{\ref{prop:control-infty-to-2-norm}} \geq 1$ is an absolute constant. 
\end{proposition}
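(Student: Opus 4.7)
The plan is to reduce $\|\tilde{B}\|_{\infty\to 2}$ to a finite supremum over sign vectors, control the contribution of each fixed sign vector via concentration for linear statistics of random permutations, and then union bound.

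\emph{Reduction to sign vectors.} For any $x\in\C^m$ with $\|x\|_\infty\le 1$, writing $x=u+iv$ with $u,v\in[-1,1]^m$ and using that $y\mapsto\|\tilde{B}y\|_2^2$ is a convex function of $y\in\R^m$, its maximum over $[-1,1]^m$ is attained at a vertex of the cube, so
$$\|\tilde{B}\|_{\infty\to 2}\le 2\max_{y\in\{-1,1\}^m}\|\tilde{B}y\|_2.$$
It therefore suffices to bound $\|\tilde{B}y\|_2$ for each fixed $y\in\{-1,1\}^m$ with failure probability strong enough to absorb a union bound over $2^m\le 2^{1.1n}$ sign vectors.

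\emph{Bound for fixed $y$.} Fix $y\in\{-1,1\}^m$ and set $X_i:=\sum_j b_{i,\pi_i(j)}\,y_j$; the $X_i$'s are independent because the $\pi_i$'s are. A direct computation gives $\E X_i=\tfrac{1}{m}\bigl(\sum_k b_{ik}\bigr)\bigl(\sum_j y_j\bigr)$, which the row-sum hypothesis on $B$ bounds in modulus by $n^\epsilon\sqrt m$, yielding $\sum_i|\E X_i|^2\le n^{1+2\epsilon}m$ deterministically. For the fluctuations, I would apply (to the real and imaginary parts of $X_i$ separately) a Doob-martingale decomposition along the sequential reveal of $\pi_i(1),\pi_i(2),\dots$ combined with Freedman's inequality: the predictable quadratic variation is bounded by $\|b_i\|_2^2\|y\|_\infty^2\le n^{2\epsilon}m$, and the uniform increment by $\|b_i\|_\infty\|y\|_\infty$, so $X_i-\E X_i$ satisfies a Bernstein-type tail with subgaussian scale $O(n^\epsilon\sqrt m)$ in the relevant moderate-deviation regime. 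A Bernstein bound for sums of squares of such independent variables (equivalently, a Chernoff-scaled variant of \cref{lemma:subgaussian-concentration}) then yields, for every sufficiently large constant $K$,
$$\Pr\!\left(\sum_{i=1}^n|X_i-\E X_i|^2\ge Kn^{1+2\epsilon}m\right)\le\exp(-cKn)$$
for some absolute $c>0$. Combining with the deterministic estimate on $\sum_i|\E X_i|^2$ via $(a+b)^2\le 2a^2+2b^2$ gives $\|\tilde{B}y\|_2\le O(\sqrt K\sqrt{mn}\,n^\epsilon)$ with the same probability. Choosing $K$ so that $cK>(1.1)\log 2+2$ and union-bounding over the $2^m$ sign vectors (absorbing the factor $2$ from the initial reduction) produces the claimed $\|\tilde{B}\|_{\infty\to 2}\le C_{\ref{prop:control-infty-to-2-norm}}\sqrt{mn}\,n^\epsilon$ with failure probability at most $\exp(-2n)$.

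\emph{Main obstacle.} The nontrivial ingredient is the per-row concentration of $X_i$ at scale $n^\epsilon\sqrt m$: a pure bounded-differences (Azuma) bound on the Doob martingale would give scale $\sqrt m\,\|b_i\|_\infty$, which is useless in the absence of any pointwise control on $B$. Using the predictable quadratic variation, via Freedman's inequality or equivalently Hoeffding's MGF-comparison to sampling with replacement, is precisely what converts the available $\ell_2$-bound on the rows of $B$ into a usable concentration scale. Balancing constants so that the $2^{1.1n}$ union bound still leaves failure probability $\le\exp(-2n)$ is routine and affects only the absolute constant $C_{\ref{prop:control-infty-to-2-norm}}$, not the order $\sqrt{mn}\,n^\epsilon$ of the bound.
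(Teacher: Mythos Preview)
Your proposal follows the same architecture as the paper's proof: reduce $\|\tilde B\|_{\infty\to 2}$ to a maximum over $\{\pm 1\}^m$ by convexity, bound $|\E X_i|$ via the row-sum hypothesis, establish per-row concentration for $X_i-\E X_i$ at scale $\|b_i\|_2\le n^{\epsilon}\sqrt m$, feed this into a sum-of-squares concentration bound for independent variables, and union bound over the $2^m$ sign vectors. The one substantive difference is the tool you use for the per-row concentration. The paper simply invokes a Talagrand-type inequality on $S_m$ (\cref{lemma:talagrand-corollary}, quoted from Rebrova--Tikhomirov), which delivers genuine subgaussianity of $X_i-\E X_i$ at scale $O(\|b_i\|_2)$ and plugs directly into \cref{lemma:subgaussian-concentration}. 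Your Freedman/Hoeffding-comparison route also works, but it only yields a Bernstein tail (and the naive Doob filtration along $\pi_i(1),\pi_i(2),\dots$ actually picks up a harmless extra factor of $\log m$ in the quadratic variation, since step $k$ contributes $O(\|b_i\|_2^2/(m-k+1))$ and these sum harmonically); consequently the sum-of-squares step formally needs a subexponential rather than subgaussian version of \cref{lemma:subgaussian-concentration}. None of this changes the outcome---these wrinkles only affect the value of $C_{\ref{prop:control-infty-to-2-norm}}$---but quoting the Talagrand lemma removes both nuisances in one line.
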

The following concentration inequality will be used to establish the subgaussianity of certain random variables appearing in the proof of \cref{prop:control-infty-to-2-norm}. It appears as Lemma 3.9 in \cite{rebrova2018coverings}, and is a direct application of Theorem 7.8 in \cite{milman2009asymptotic}. 
\begin{lemma}[Lemma 3.9 in \cite{rebrova2018coverings}]
\label{lemma:talagrand-corollary}
Let $\boldsymbol{y}:=(y_{1},\dots,y_{m})$ be a non-zero complex
vector and let $\boldsymbol{v}\in\{\pm1\}^{m}$. Consider the function
$f:S_{m}\to\C$ defined by 
\[
f(\pi):=\sum_{j=1}^{m}v_{\pi(j)}y_{j}.
\]
Then, for all $t>0$,
\[
\Pr\left(\left|f(\pi)-\E f\right|\geq t\right)\leq4\exp\left(-\frac{t^{2}}{128\|\boldsymbol{y}\|_{2}^{2}}\right).
\]
\end{lemma}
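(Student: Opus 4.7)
The plan is to prove this via Chatterjee's exchangeable-pair (Stein-pair) concentration method, which is essentially the content of the Milman--Schechtman theorem on the symmetric group. First I would reduce to the real case: write $y_j = u_j + iw_j$, so $f = f_R + if_I$ with $f_R(\pi) := \sum_{j=1}^{m} v_{\pi(j)} u_j$ and $f_I$ analogous. Since $\|u\|_2^2, \|w\|_2^2 \leq \|y\|_2^2$, a real-valued tail bound
\[
\Pr(|f_R - \E f_R| \geq s) \leq 2\exp(-s^2/(C\|y\|_2^2))
\]
together with the analogous bound for $f_I$ will yield the stated complex bound, via the inclusion $\{|f - \E f| \geq t\} \subseteq \{|f_R - \E f_R| \geq t/2\} \cup \{|f_I - \E f_I| \geq t/2\}$ and adjustment of constants.

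For the real-valued concentration, I would take the exchangeable pair $(\pi,\pi')$ with $\pi' := \pi \circ \tau$, where $\tau = (I,J)$ is a uniformly random transposition independent of $\pi$. A short computation gives $f_R(\pi) - f_R(\pi') = (v_{\pi(I)} - v_{\pi(J)})(u_I - u_J)$. Averaging over $\tau$ and using the elementary identity $\sum_{I<J}(a_I - a_J)(b_I - b_J) = m \sum_I a_I b_I - (\sum_I a_I)(\sum_I b_I)$ with $a_I := v_{\pi(I)}$ and $b_I := u_I$, combined with the $\pi$-invariance $\sum_I v_{\pi(I)} = \sum_i v_i$, yields the Stein identity
\[
\E[f_R(\pi) - f_R(\pi') \mid \pi] = \frac{2}{m-1}(f_R(\pi) - \E f_R).
\]
Hence $F(\pi,\pi') := \tfrac{m-1}{2}(f_R(\pi) - f_R(\pi'))$ is an antisymmetric Stein pair centering $f_R$ at its mean.

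The next step is to bound the Chatterjee variance proxy $v(\pi) := \tfrac{1}{2}\E[F(\pi,\pi')(f_R(\pi) - f_R(\pi')) \mid \pi] = \tfrac{m-1}{4}\E[(f_R(\pi) - f_R(\pi'))^2 \mid \pi]$. Using $(v_{\pi(I)} - v_{\pi(J)})^2 \leq 4$ (which crucially uses $v \in \{\pm 1\}^m$) together with the AM-GM estimate $(u_I - u_J)^2 \leq 2(u_I^2 + u_J^2)$,
\[
\E[(f_R(\pi) - f_R(\pi'))^2 \mid \pi] \leq \frac{4}{\binom{m}{2}} \sum_{I<J} 2(u_I^2 + u_J^2) = \frac{16\|u\|_2^2}{m},
\]
so $v(\pi) \leq 4\|y\|_2^2$ uniformly in $\pi$. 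Chatterjee's exchangeable-pair concentration inequality then gives the real-valued bound with $C = 8$; combining with the analogous $f_I$ bound via the reduction above yields the lemma (in fact with constants strictly better than the stated $128$).

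The only real obstacle is algebraic: one must identify the correct normalization of the pair $(\pi,\pi\tau)$ that turns it into a Stein pair, i.e.\ verify that $\E[f_R(\pi) - f_R(\pi') \mid \pi]$ is proportional to $f_R(\pi) - \E f_R$. This hinges on the permutation symmetry $\sum_I v_{\pi(I)} = \sum_i v_i$, which forces the cross-term $(\sum a_I)(\sum b_I)$ produced by the identity above to be a constant in $\pi$ and hence to align precisely with $\E f_R$. Once this identity is in hand, the rest is a direct application of a standard black box.
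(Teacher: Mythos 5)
Your argument is correct and takes a genuinely different route from the paper's. The paper disposes of this lemma by citation: it invokes Lemma 3.9 of Rebrova--Tikhomirov for the real case (which in turn rests on the Maurey/Milman--Schechtman martingale concentration theorem for Lipschitz functions on $S_m$) and then observes, as in the accompanying remark, that the complex version follows by splitting $f$ into real and imaginary parts and taking a union bound. You use the same complex-to-real reduction, but replace the cited real-valued input with a self-contained proof via Chatterjee's exchangeable-pair method. I checked the key steps: the exchangeable pair $(\pi,\pi\tau)$ gives $f_R(\pi)-f_R(\pi')=(v_{\pi(I)}-v_{\pi(J)})(u_I-u_J)$; your identity $\sum_{I<J}(a_I-a_J)(b_I-b_J)=m\sum_I a_Ib_I-(\sum_I a_I)(\sum_I b_I)$ is correct, and together with the $\pi$-invariance of $\sum_I v_{\pi(I)}$ it yields the Stein identity $\E[f_R(\pi)-f_R(\pi')\mid\pi]=\frac{2}{m-1}(f_R(\pi)-\E f_R)$; the variance proxy computation $v(\pi)\leq\frac{m-1}{4}\cdot\frac{4}{\binom{m}{2}}\cdot 2(m-1)\|u\|_2^2\leq 4\|y\|_2^2$ is also right (note that boundedness of $v(\pi)$, not boundedness of the raw differences, is what Chatterjee's theorem needs, so unbounded $u_j$'s are not an obstacle). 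What your approach buys is self-containment and a better constant ($32$ versus $128$ in the exponent after the union bound); what the paper's approach buys is brevity, since the real-valued statement is a well-known citable result. One small nit: it is a stretch to say the Milman--Schechtman theorem \emph{is} the exchangeable-pair method --- that theorem is proved by Maurey's martingale argument, which is conceptually distinct from Stein pairs even though both yield subgaussian concentration on $S_m$.
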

\begin{remark}
In \cite{rebrova2018coverings}, the above lemma is stated and proved (with better constants) for real vectors $\boldsymbol{y}$. However, the version above for complex vectors immediately follows from this by separately considering the real and imaginary parts of $f$ and using the union bound. 
\end{remark}
\begin{proof}[Proof of \cref{prop:control-infty-to-2-norm}]
If $\|\tilde{B}\|_{\infty\to 2} \geq C_{\ref{prop:control-infty-to-2-norm}}\sqrt{mn}n^{\epsilon}$, then there exists a complex vector $\boldsymbol{w} = \boldsymbol{w_1} + i \boldsymbol{w_2}$, where $\boldsymbol{w_1},\boldsymbol{w_2}\in \R^{m}$ and  $\|\boldsymbol{w_1}\|_{\infty}, \|\boldsymbol{w_2}\|_{\infty}\leq 1$, such that 
$$\|\tilde{B}\boldsymbol{w}_{1}\|_{2} + \|\tilde{B}\boldsymbol{w}_{2}\|_{2}\geq \|\tilde{B}\boldsymbol{w}\|_{2} \geq C_{\ref{prop:control-infty-to-2-norm}}\sqrt{mn}n^{\epsilon}.$$ 
Therefore, it suffices to control the $\infty$-to-$2$ norm of $\tilde{B}$ restricted to vectors in $\R^{m}$.
For this, it suffices by convexity and the union bound to show that
for any fixed $\boldsymbol{v}\in\{\pm1\}^{m}$, 
\[
\Pr\left(\|\tilde{B}\boldsymbol{v}\|^{2}_{2}\geq (128C_{\ref{lemma:subgaussian-concentration}}+2)mn^{1+2\epsilon}\right)\leq\exp(-2n-m\ln2).
\]
To see this, we begin by noting that the random variables $X_{i}:=\langle\tilde{B}\boldsymbol{v},e_{i}\rangle$
are independent and 
\[
X_{i}\sim\sum_{j=1}^{m}v_{\pi_{i}(j)}b_{ij}.
\]
In particular, if $\ell$ denotes the number of ones in $(v_{1},\dots,v_{m})$,
then 
\[
\left|\E[X_{i}]\right|=\left|\sum_{j=1}^{m}\E\left[v_{\pi_{i}(j)}\right]b_{ij}\right|=\left|\sum_{j=1}^{m}\frac{2\ell-m}{m}b_{ij}\right|\leq\left|\sum_{j=1}^{m}b_{ij}\right|\leq n^{\epsilon}\sqrt{m}.
\]
By \cref{lemma:talagrand-corollary}, for all $t>0$, we have
\[
\Pr\left(\left|X_{i}-\E[X_{i}]\right|\geq t\right)\leq4\exp\left(-\frac{t^{2}}{128\|\boldsymbol{b_{i}}\|_{2}^{2}}\right)\leq4\exp\left(-\frac{t^{2}}{128mn^{2\epsilon}}\right).
\]
In particular, the random variables $n^{-\epsilon}m^{-1/2}|X_{i}-\E[X_{i}]|$
are $16$-subgaussian so that by \cref{lemma:subgaussian-concentration}
\[
\Pr\left(\sum_{i=1}^{n}|X_{i}-\E[X_{i}]|^{2}\geq 256C_{\ref{lemma:subgaussian-concentration}}mn^{1+2\epsilon}\right)\leq\exp\left(-4n\right)\leq \exp\left(-2n-m\ln{2}\right).
\]
Finally, since  
\begin{align*}
\sum_{i=1}^{n}|X_{i}|^{2} & =\sum_{i=1}^{n}\left|X_{i}-\E[X_{i}]+\E[X_{i}]\right|^{2}\\
 & \leq2\sum_{i=1}^{n}\left|X_{i}-\E[X_{i}]\right|^{2}+2\sum_{i=1}^{n}|\E[X_{i}]|^{2}\\
 & \leq2\sum_{i=1}^{n}\left|X_{i}-\E[X_{i}]\right|^{2}+2mn^{1+2\epsilon},
\end{align*}
it follows that 
\begin{align*}
\Pr\left(\sum_{i=1}^{n}|X_{i}|^{2}\geq (256C_{\ref{lemma:subgaussian-concentration}}+2)mn^{1+2\epsilon}\right)&\leq\Pr\left(\sum_{i=1}^{n}\left|X_{i}-\E[X_{i}]\right|^{2}\geq 256C_{\ref{lemma:subgaussian-concentration}}mn^{1+2\epsilon}\right)\\
&\leq\exp\left(-2n-m\ln2\right),
\end{align*}
which completes the proof. 
\end{proof}

Given the above results, \cref{prop:operator-norm-control} is almost immediate.
\begin{proof}[Proof of \cref{prop:operator-norm-control}] \textbf{1.}
Let $N_n$ be the $n\times n$ complex random matrix appearing in the statement of the proposition, and let $\mathcal{E}$ denote the `good' event appearing in \cref{lemma:proj_control_basic} i.e. $\mathcal{E}$ is the event that there exists some $I\subseteq[n]$ with $|I|\geq n-2n^{1-\epsilon}$ such that for all $i\in I$,  
\[
\left(\sum_{j=1}^{n}|m_{ij}|^{2}\leq n^{1+2\epsilon}\right)\bigwedge\left(\left|\sum_{j=1}^{n}m_{ij}\right|\leq n^{(1/2)+\epsilon}\right).
\]
Since $\Pr(\mathcal{E}^c) \leq 2\exp(-n^{1-\epsilon}/4)$ by \cref{lemma:proj_control_basic}, it suffices to show that
$$\Pr\left(\left\{\inf_{i\in \mathcal{I}}\|P_{I}N_{n}\|_{\infty \to 2} \geq C_{\ref{prop:control-infty-to-2-norm}} n^{1+\epsilon} \right\}\cap \mathcal{E}\right) \leq \exp(-n),$$
where $\mathcal{I}$ denotes the collection of subsets of $[n]$ of size at least $n-2n^{1-\epsilon}$. For this, note that since both the event $\mathcal{E}$ as well as our distribution on $n\times n$ matrices are invariant under permuting each row of $N_n$ separately, it suffices to show the following: for each (fixed) $n\times n$ complex matrix $A_n$ for which there exists a subset $I\subseteq [n]$ as above,
$$\Pr\left(\|P_{I} \tilde{A}_{n}\|_{\infty \to 2} \geq C_{\ref{prop:control-infty-to-2-norm}} n^{1+\epsilon} \right) \leq \exp(-n),$$
where $\tilde{A}_{n}$ is the random complex matrix obtained by permuting each row of $A_n$ independently and uniformly. But this follows immediately from \cref{prop:control-infty-to-2-norm} applied to the $n\times n$ matrix $P_I A_n$. \\

\textbf{2.} The proof of this part is very similar to the previous one. Let $\mathcal{J}$ denote the collection of all subsets of $[n]$ of size $n^{1-\delta}$ and let $\mathcal{I}$ denote the collection of all subsets of $[n]$ of size at least $n-2n^{1-\epsilon}$. We show that the desired conclusion in 2. holds with sufficiently high probability for fixed $J\in \mathcal{J}$; the proof is completed by taking the union bound over the at most $$\binom{n}{n^{1-\delta}} \leq \exp(n^{1-\delta}\log{n}) \leq C(\epsilon)\exp(n^{1-3\epsilon})$$ choices for $J\in \mathcal{J}$, where $C(\epsilon)\geq 1$ depends only on $\epsilon$, and the last inequality uses that $\delta \geq 4\epsilon$.

For such a fixed $J\in \mathcal{J}$, let $\mathcal{E}_{\epsilon,\delta}$ denote the event that there exists some $I\in \mathcal{I}$ such that for all $i\in I$, 
\[
\left(\sum_{j\in J}|m_{ij}|^{2}\leq n^{2\epsilon}|J|\right)\bigwedge\left(\left|\sum_{j\in J}m_{ij}\right|\leq n^{\epsilon}\sqrt{|J|}\right).
\]
  As before, by \cref{lemma:proj_control_basic} applied to the operator $N_n P_J$ viewed as an $n\times |J|$ matrix, we see that $\Pr(\mathcal{E}_{\epsilon,\delta}^{c}) \leq 2\exp(-n^{1-\epsilon}/4)$. Therefore, it suffices to show that   
$$\Pr\left(\left\{\inf_{i\in \mathcal{I}}\|P_{I}N_{n}P_J\|_{\infty \to 2} \geq C_{\ref{prop:control-infty-to-2-norm}} n^{1+\epsilon-0.5\delta} \right\}\cap \mathcal{E}_{\epsilon,\delta}\right) \leq \exp(-n).$$
But this follows by exactly the same argument (using \cref{prop:control-infty-to-2-norm}) as above. 
\end{proof} 
 
\section{Proof of \cref{prop:eliminate-large-LCD}}
\label{sec:eliminate-large-LCD}
\begin{proof}[Proof of \cref{prop:eliminate-large-LCD} following \cite{litvak2005smallest, tao2009inverse}]
Since $M_n^{\dagger}$ and $M_{n}$ have the same singular values, it follows that a necessary condition for a matrix $M_n$ to satisfy the event in \cref{prop:eliminate-large-LCD} is that there exists a unit row vector $\boldsymbol{a'}=(a'_{1},\dots,a'_{n})$
such that $\|\boldsymbol{a'}^{T}M_{n}\|_{2}\leq \eta$. To every matrix $M_n$, associate such a vector $\boldsymbol{a'}$ arbitrarily (if one exists) and denote it by $\boldsymbol{a'}_{M_n}$; this leads to a partition of the space of all matrices with least singular value at most $\eta$. Then, by taking a union bound, it suffices to show the following. 
\begin{align}
\label{eqn:intersected-event}
\Pr\left(\exists \boldsymbol{a}\in \Gamma^{1}(\eta): \|M_n \boldsymbol{a}\|_{2} \leq \eta 
\bigwedge \|\boldsymbol{a'}_{M_n}\|_{\infty} = |a'_n| \right) \leq 2C_{\ref{thm:LCD-controls-sbp}}\left(n^{3/2}\eta + \exp(-C_{\ref{thm:LCD-controls-sbp}}^{-1}\sqrt{n})\right).
\end{align}
To this end, we expose the first $n-1$ rows $X_{1},\dots,X_{n-1}$ of $M_{n}$. Note that if there is some $\boldsymbol{a}\in\Gamma^{1}(\eta)$ satisfying $\|M_{n}\boldsymbol{a}\|_{2}\leq \eta$,
then there must exist a vector $\boldsymbol{y}\in \Gamma^{1}(\eta)$, depending only on
the first $n-1$ rows $X_{1},\dots,X_{n-1}$, such that 
\[
\left(\sum_{i=1}^{n-1}|X_{i}\cdot \boldsymbol{y}|^{2}\right)^{1/2}\leq \eta.
\]
In other words, once we expose the first $n-1$ rows of the matrix, either the matrix cannot be extended to one satisfying the event in \cref{prop:eliminate-large-LCD}, or there is some unit vector $\boldsymbol{y} \in \Gamma^{1}(\eta)$, which can be chosen after looking only at the first $n-1$ rows, and which satisfies the equation above. For the rest of the proof, we condition on the first $n-1$ rows $X_1,\dots,X_{n-1}$ (and hence, a choice of  $\boldsymbol{y}$).

For any vector $\boldsymbol{w'}\in \S^{2n-1}$ with $w'_n \neq 0$, we can write
\[
X_{n}=\frac{1}{w_{n}'}\left(\boldsymbol{u}-\sum_{i=1}^{n-1}w_{i}'X_{i}\right),
\]
where $\boldsymbol{u}:= \boldsymbol{w'}^{T}M_n$.
Thus, restricted to the event $\{s_n(M_n) \leq \eta\}\bigwedge \{\|\boldsymbol{a'}_{M_n}\|_{\infty} = |a'_n|\}$, we have
\begin{align*}
\left|X_{n}\cdot \boldsymbol{y}\right| & =\inf_{\boldsymbol{w'}\in \S^{2n-1}, w'_n \neq 0}\frac{1}{|w_{n}'|}\left|\boldsymbol{u}\cdot \boldsymbol{y}-\sum_{i=1}^{n-1}w_{i}'X_{i}\cdot \boldsymbol{y}\right|\\
 &\leq  
 \frac{1}{|a_{n}'|}\left(\|\boldsymbol{a'}_{M_n}^{T}M_{n}\|_{2}\|\boldsymbol{y}\|_{2}+\|\boldsymbol{a'}_{M_n}\|_{2}\left(\sum_{i=1}^{n-1}|X_{i}\cdot \boldsymbol{y}|^{2}\right)^{1/2}\right)\\
 &\leq \eta \sqrt{n}\left(\|\boldsymbol{y}\|_{2} + \|\boldsymbol{a'}_{M_n}\|_{2}\right) \leq 2\eta \sqrt{n},
\end{align*}
where the second line is due to the Cauchy-Schwarz inequality and the particular choice $\boldsymbol{w'}=\boldsymbol{a'}_{M_n}$.
It follows that the probability in \cref{eqn:intersected-event} is bounded by
$$\rho_{2\eta \sqrt{n},z}(\boldsymbol{y}) \leq 2C_{\ref{thm:LCD-controls-sbp}}\left(n^{3/2}\eta + \exp(-C_{\ref{thm:LCD-controls-sbp}}^{-1}n^{1/50})\right), $$
which completes the proof. 
\end{proof}

\end{document}